\documentclass[11pt,reqno]{amsart}
\usepackage{a4wide}
\usepackage[
backend=bibtex,
style=numeric,
sorting=nyt,
maxbibnames=50,
giveninits=true,
maxalphanames=10
]{biblatex}
\usepackage{csquotes}
\renewbibmacro{in:}{%
	\ifentrytype{article}{}{}}
\addbibresource{FreeBoundary_bib.bib}
\DeclareFieldFormat[article]{volume}{\textbf{#1}}
\DeclareFieldFormat[article]{title}{\textit{#1}}
\DeclareFieldFormat[article]{journaltitle}{#1}
\DeclareFieldFormat[article]{number}{\textbf{#1}}
\renewbibmacro*{volume+number+eid}{%
	\printfield{volume}%
	\setunit*{\textbf{\addcolon}}
	\printfield{number}\setunit{\addcomma\space}
	\printfield{eid}}

\usepackage{verbatim}
\usepackage[shortlabels]{enumitem}
\usepackage[T1]{fontenc}
\usepackage[unicode,hypertexnames=false,colorlinks=true,linkcolor=blue,citecolor=blue]{hyperref}
\usepackage{esint}
\usepackage{hyperref}
\usepackage[capitalise, nameinlink, noabbrev]{cleveref} 

\hypersetup{colorlinks,breaklinks,
	linkcolor=[rgb]{0,0,0},
	citecolor=[rgb]{0,0,0},
	urlcolor=[rgb]{0,0,0}}

\usepackage{xfrac,xcolor,color,graphicx}
\newcommand{\be}{\begin{equation}}
	\newcommand{\ee}{\end{equation}}
\theoremstyle{plain} 

\newtheorem{theorem}{Theorem}[section]
\newtheorem{proposition}[theorem]{Proposition}

\newtheorem{lemma}[theorem]{Lemma}
\newtheorem{definition}[theorem]{Definition}

\theoremstyle{remark}
\newtheorem{remark}[theorem]{Remark}

\newcommand{\res}{\mathop{\hbox{\vrule height 7pt width .5pt depth 0pt
			\vrule height .5pt width 6pt depth 0pt}}\nolimits}

\newcommand\ddfrac[2]{\frac{\displaystyle #1}{\displaystyle #2}}

\newcommand{\R}{\mathbb{R}}

\newcommand{\N}{\mathbb{N}}

\newcommand{\eps}{\varepsilon}
\newcommand{\F}{\mathcal{F}}
\newcommand{\mathbbmm}[1]{\text{\usefont{U}{bbm}{m}{n}#1}}
\newcommand{\ind}{\mathbbmm{1}}
\newcommand{\HH}{\mathcal{H}}


\newcommand{\norm}[2]{\Vert {#1} \Vert_{#2}}
\def\O{\Omega}
\DeclareMathOperator{\dive}{div}
\newcommand{\bea}{\begin{equation*}\begin{aligned}}
		\newcommand{\eea}{\end{aligned}\end{equation*}}

\makeatletter
\newcommand*{\wackyenum}[1]{%
	\expandafter\@wackyenum\csname c@#1\endcsname%
}

\newcommand*{\@wackyenum}[1]{%
	$\ifcase#1\or(H_\F)\or(H_f)\or(H_g)\or(H_p)%
	\else\@ctrerr\fi$%
}
\AddEnumerateCounter{\wackyenum}{\@wackyenum}{53.13}
\makeatother

\crefname{subsection}{subsection}{subsections}

\numberwithin{equation}{section}

\numberwithin{equation}{section}

\setcounter{tocdepth}{1}

\title{Smoothness and stability in the Alt-Phillips problem}

\author[M. Carducci]{Matteo Carducci}\thanks{}
\address {Matteo Carducci \newline \indent
	Classe di Scienze, Scuola Normale Superiore \newline \indent
	Piazza dei Cavalieri 7, 56126 Pisa - ITALY}
\email{\href{mailto:matteo.carducci@sns.it}{matteo.carducci@sns.it}}
\author[G. Tortone]{Giorgio Tortone}\thanks{}
\address {Giorgio Tortone \newline \indent
	Dipartimento di Matematica ``Giuseppe Peano'', Università di Torino \newline \indent
Via Carlo Alberto 10, 10123 Torino - ITALY}
\email{\href{mailto:giorgio.tortone@unito.it}{giorgio.tortone@unito.it}}

\begin{document}
	\begin{abstract} 
    We study the one-phase Alt-Phillips free boundary problem, focusing on the case of negative exponents $\gamma \in (-2,0)$. The goal of this paper is twofold.\\
    On the one hand, we prove smoothness of $C^{1,\alpha}$-regular free boundaries by reducing the problem to a class of degenerate quasilinear PDEs, for which we establish Schauder estimates. 
Such method provide a unified proof of the smoothness for general exponents.\\
On the other hand, by exploiting the higher regularity of solutions, we derive a new stability condition for the Alt-Phillips problem in the negative exponent regime, ruling out the existence of nontrivial axially symmetric stable cones in low dimensions. Finally, we provide a variational criterion for the stability of cones in the Alt-Phillips problem, which recovers the one for minimal surfaces in the singular limit as $\gamma \to -2$.
	\end{abstract}
	
	\keywords{Alt-Phillips, free boundary, higher regularity, degenerate quasilinear equations, stable solutions, stable minimal surfaces}
	\subjclass[2020] {35R35, 35B65, 35J61, 35B07, 49Q05}
	\maketitle
	\section{Introduction} 
    The one-phase Alt-Phillips free boundary problem concerns the study of non-negative minimizers of the functional 
\bea\label{e:functional}\mathcal{J}_\gamma(u):=\int_{B_1}(|\nabla u|^2 + u^\gamma \ind_{\{u>0\}})\,dx,\quad \mbox{where }\gamma\in(-2,2),\eea among functions with non-negative boundary datum on $\partial B_1$. 
The functional was first introduced in the 80s by Phillips in \cite{phillips}, and later further investigated by Alt and Phillips in \cite{Alt-Phillips}. More broadly, variational problems involving the potential $W(t) := t^\gamma \ind_{\{t>0\}}$ serve as a prototypical framework for studying general semilinear and quasilinear free boundary problems. When the potential lacks $C^{1,1}$-regularity (e.g.,~$\gamma \in (-2,2)$), it is natural to expect that minimizers may exhibit regions where the solution remains constant, thereby giving rise to a free boundary.

Beyond its original applications in modeling population dynamics \cite{GM}, and porous catalysts \cite{A}, the Alt-Phillips problem is especially notable for its role in ``interpolating'' between several classical and well-studied free boundary problems. Specifically: 
\begin{enumerate}[label=(\roman*)]
    \item $\gamma=1$, the obstacle problem;
    \item $\gamma =0$, the Alt-Caffarelli problem (i.e.,~the Bernoulli  problem);
    \item $\gamma \to -2$, the functional ``approximates'' the problem of minimal surfaces (see \cite{DeSilvaSavinNegativePower,DeSilvaSavinNegativePowerCompactness}). 
\end{enumerate}
Although the regularity theory for both the obstacle problem, the Alt-Caffarelli problem, and the minimal surfaces is fairly well understood (e.g.,~\cite{simonbook,CaffarelliSalsa:GeomApproachToFreeBoundary,psu12,maggi,Velichkov:RegularityOnePhaseFreeBd,xavixavi}), 
it is only in recent years that several research groups have developed a fine regularity theory for minimizers of the Alt-Phillips functional \cite{DeSilvaSavinPositivePower,DeSilvaSavinNegativePower,DeSilvaSavinNegativePowerUniformDensity,DeSilvaSavinNegativePowerCompactness,fk24,aks25,thomas,ros-restrepo,savinyu-altphillips-stable,savinyu-altphillips-concentration}.
\vspace{-0.35cm}\\

The focus of this work is the case of negative exponents $\gamma \in (-2,0)$, which has been introduced 
by De Silva and Savin in \cite{DeSilvaSavinNegativePower}. Precisely, as in the case of positive exponents, they show that minimizers exist by direct method and they are $\beta$-H\"{o}lder continuous, with 
\be\label{e:beta}
\beta := \frac{2}{2-\gamma}. 
\ee
This regularity is optimal, indeed the exponent $\beta$ is the scaling parameter of the Alt-Phillips functional for every exponent. Then, if we set $\O_u:=\{u>0\}$, they proved that the free boundary $\partial \O_u$ can be decomposed as the disjoint union 
$$
\partial \O_u \cap B_1 = \mathrm{Reg}(\partial\O_u)\cup \mathrm{Sing}(\partial\O_u),
$$
where the regular part $\mathrm{Reg}(\partial\O_u)$ is locally the graph of a $C^{1,\alpha}$ function and $\mathrm{Sing}(\partial\O_u)$ is a closed singular
set of Hausdorff dimension at most $d-d^*(\gamma)$, for some $d^*(\gamma)\geq 3$. In \cite{DeSilvaSavinNegativePower}, they observed one of the first remarkable differences between the cases of negative and positive exponents: indeed, the change of sign in the exponent makes the problem more degenerate and the free boundary condition is derived in terms of a second order expansion of the solution.\\
Furthermore, using a monotonicity formula and a dimension reduction argument, they showed
that the dimensional threshold $d^*(\gamma)$ is the first dimension in which
a minimizing cone for the Alt-Phillips problem exhibits singularities. Notice that, by the works of Caffarelli, Jerison and Kenig \cite{CaffarelliJerisonKenig04:NoSingularCones3D}, Jerison and Savin \cite{JerisonSavin15:NoSingularCones4D}, De Silva and Jerison \cite{DeSilvaJerison09:SingularConesIn7D}, we know that for the Alt-Caffarelli problem, i.e.,~$\gamma =0$, minimizing cones are flat in $\R^d$ for every dimension smaller than $d^*(0)\in \{5,6,7\}$ (we stress that the proof works also for stable cones).

Afterwards, in \cite{DeSilvaSavinNegativePowerCompactness} they improved the estimate of $d^*(\gamma)$ by exploiting a distinctive phenomenon that arises as $\gamma$ approaches the limiting values $-2$ and $0$. Indeed, by a fine compactness argument, they proved that the functionals $\Gamma$-converge to a Dirichlet-perimeter functional introduced by Athanasopoulos, Caffarelli, Kenig and Salsa in \cite{acks}. Therefore, by classical results concerning the dimension of the singular set for area-minimizing hypersurfaces (see \cite{simonbook,maggi}), they proved that asymptotically
$$
\text{if $\,\gamma \sim -2\,$ then $\,d^*(\gamma)\geq 8$},\qquad
\text{if $\,\gamma \sim 0\,$ then $\,d^*(\gamma)\geq 5$}.
$$

\subsection{Two equivalent formulations}\label{s:subs-auxiliary} Before presenting the main results, it is worth introducing an equivalent formulation of the Alt-Phillips problem, originally proposed in \cite[Section 4]{Alt-Phillips}. Precisely, for every exponent $\gamma \in (-2,2)$, given a minimizer $u$ of $\mathcal{J}_\gamma$ and letting $\beta$ be as in \eqref{e:beta}, we set $w:=\beta u^{1/\beta}$. Then $w$ is a minimizer of the functional 
\bea\label{e:functional-w}
\mathcal{E}_s(w):=\int_{B_1} w^s (|\nabla w|^2 +1) \ind_{\{w>0\}}
\,dx,\quad \mbox{where }s:=\beta\gamma \in (-1,+\infty).
\eea
 The auxiliary function $w$ has been exploited both for the case of positive 
 \cite{Alt-Phillips,DeSilvaSavinPositivePower,thomas}
 and negative exponents \cite{DeSilvaSavinNegativePower,DeSilvaSavinNegativePowerCompactness}. In this last contributions, the authors showed the convenience of this formulation for the study of regular points. Nevertheless, the variables $u$ and $w$ are interchangeable, since $\O_u\equiv \O_w$, and thus our analysis of the regularity of the free boundary can equivalently be formulated in terms of $w$. In the case of negative exponents $\gamma\in(-2,0)$, as initially highlighted by \cite{DeSilvaSavinNegativePower}, minimizers of $\mathcal{J}_\gamma$ are solutions of the Euler-Lagrange equations
\bea\label{e:EL-u}
\Delta u = \frac{\gamma}{2}u^{\gamma-1}\quad\mbox{in }\O_u\cap B_1,\qquad u^\gamma\left(\frac{|\nabla u|^2}{u^\gamma} - 1\right)=0\quad\mbox{on }\partial\O_u\cap B_1.
\eea
Equivalently, the free boundary condition can be rewritten as
\bea\label{e:visc.DeS.u}
\lim_{t \to 0^+} t^{2(\beta -1)}\left(\frac{u(x_0-t\nu_{x_0})}{t^\beta} - c_\beta \right) =0, \quad\mbox{where } c_\beta:=  \frac{1}{\beta^{\beta}} = \left(\frac{2-\gamma}{2}\right)^{\frac{2}{2-\gamma}},
\eea
where $x_0 \in \partial \O_u \cap B_1$ and $\nu_{x_0}$ is the outer normal to $\partial\O_u$ at $x_0$. We stress that $c_\beta$ is the constant associated to the one-dimensional solution $u_0(t) :=c_\beta (t^+)^\beta$.\\
On the other hand, a direct computation shows that the Alt–Phillips problem can be viewed as a degenerate one-phase type free boundary problem, where $w$ satisfies 
\bea\label{e:EL-w}
\Delta w = \frac{s}{2}\frac{1-|\nabla w|^2}{w}\quad\mbox{in }\O_w\cap B_1,\qquad
w^s(|\nabla w|^2 -1) = 0\quad\mbox{on }\partial\O_w\cap B_1.
\eea
Equivalently, the free boundary condition for $w$ is \be\label{e:visc.DeS.w}
\lim_{t \to 0^+} t^{s}\left(\frac{w(x_0-t\nu_{x_0})}{t} - 1 \right) =0 \quad\mbox{at }x_0\in \partial\O_w\cap B_1.
\ee
As noted in \cite{DeSilvaSavinNegativePower}, unlike in the case of positive exponents, the free boundary condition cannot be easily derived through integration by parts or inner variations. We refer to \cite[Proposition 4.4]{DeSilvaSavinNegativePower}, where such a condition is obtained for minimizers via a calibration argument. Ultimately, by proving an improvement-of-flatness lemma for viscosity solutions of the auxiliary formulation, they deduce $C^{1,\alpha}$-regularity for both the free boundary and the auxiliary function, in a neighborhood of regular points. We postpone the discussion of the natural free boundary condition for the Alt-Phillips problem to \cref{r:soddisfazione}.\vspace{-0.35cm}\\

The principle intention of this paper is to establish higher regularity of the free boundary at regular points and define the notion of stable singular cones for the Alt-Phillips functional in the negative regime, highlighting the connection with the stability inequalities for the Alt-Caffarelli functional \cite{CaffarelliJerisonKenig04:NoSingularCones3D} and for area-minimizing hypersurfaces \cite{Simons:minimal-varieties}.

\subsection{\texorpdfstring{$C^\infty$}{Cinfty} regularity of regular points}
Higher regularity of the free boundary in the Alt-Phillips problem has been recently established only in the range of positive exponents, with different methodologies. Precisely, at regular points, the smoothness was first proved for $\gamma \in (1,2)$ in \cite{fk24}, then extended to $\gamma \in (2/3,2)$ in \cite{aks25}, and more generally to all positive exponents $\gamma \in (0,2)$ in \cite{ros-restrepo} (see also \cite{dss25}).\\\vspace{-0.35cm} 

In the following theorem, we conclude the study of higher regularity for the Alt-Phillips problem, by proving smoothness of the free boundary at regular points in the case of negative exponents $\gamma\in(-2,0)$. 
Since our approach naturally extends to the full range of exponents $\gamma \in (-2,2)$, 
we provide an alternative and unified proof for the smoothness of the regular part of the free boundary.
\begin{theorem}\label{t:main-AP}
Let $\gamma \in (-2,2)$ and $u$ be a local minimizer of the Alt-Phillips functional $\mathcal{J}_\gamma$ in $B_1$. Assume that $x_0 \in\partial \O_u\cap B_1$ is a regular free boundary point. Then:
\begin{enumerate}
    \item[(i)] the free boundary $\partial\Omega_u$ is the graph of a smooth function in a neighborhood of $x_0$;\\\vspace{-0.35cm}
    \item[(ii)] $
w:= \beta u^{1/\beta}$ and $u/{\mathrm{dist}(x,\partial \O_u)^\beta}$
are smooth in a neighborhood of $x_0$ in $\overline{\Omega}_u$.
\end{enumerate}
\end{theorem}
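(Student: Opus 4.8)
The plan is to work near $x_0$, remove the free boundary by a partial hodograph--Legendre transform tailored to the auxiliary function $w := \beta u^{1/\beta}$, and reduce (i) to the smoothness of the resulting inverse map $\psi$, which will solve a degenerate (for $\gamma>0$) or singular (for $\gamma<0$) quasilinear equation of weighted-Laplacian type with weight exponent $s := \beta\gamma \in (-1,+\infty)$; statement (ii) then follows at once. After translating $x_0$ to $0$ and rotating so that $\nu_{x_0} = e_d$, the $C^{1,\alpha}$ regularity of the free boundary and of the auxiliary function at regular points (recalled in the introduction) gives, in a small ball $B_r$: $\O_w \cap B_r = \{x_d > g(x')\}$ with $g \in C^{1,\alpha}$, $g(0) = 0$, $\nabla g(0) = 0$; $w \in C^{1,\alpha}(\overline{\O_w}\cap B_r)$ solving $\Delta w = \tfrac{s}{2}(1-|\nabla w|^2)/w$ in $\O_w\cap B_r$ with $|\nabla w| = 1$ on $\partial\O_w$; and $\partial_{x_d} w > \tfrac12$ on $\overline{\O_w}\cap B_r$. (For $\gamma = 0$, so $s = 0$, this is the Alt--Caffarelli problem and the scheme below specializes to the classical non-degenerate treatment of Kinderlehrer--Nirenberg.)

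\emph{Step 1: hodograph reduction.} Since $\partial_{x_d} w > 0$, the map $(x', x_d) \mapsto (y', y_d) := (x', w(x))$ is a $C^{1,\alpha}$ diffeomorphism of $\O_w \cap B_r$ onto a neighbourhood of $0$ in $\{y_d > 0\}$ carrying $\partial\O_w$ onto $\{y_d = 0\}$; let $x_d = \psi(y)$ be its inverse, so the free boundary is the graph $\{x_d = \psi(x', 0)\}$, $\psi \in C^{1,\alpha}$, $\psi_{y_d} > 0$. Using $w_{x_d} = 1/\psi_{y_d}$ and $w_{x_i} = -\psi_{y_i}/\psi_{y_d}$, a direct computation turns the interior Euler--Lagrange equation into the degenerate quasilinear equation
\be\label{e:hodograph-plan}
\mathcal{A}(\nabla\psi) : D^2\psi \;=\; \frac{s}{y_d}\,\mathcal{B}(\nabla\psi),
\ee
where $\mathcal{A},\mathcal{B}$ are smooth, $\mathcal{A}(e_d) = \mathrm{Id}$, and $\mathcal{B}(p) = (1 + |p'|^2 - p_d^2)/(2p_d)$; equivalently, \eqref{e:hodograph-plan} is the divergence-form equation $\dive\!\big(y_d^{\,s}\,\nabla_p L(\nabla\psi)\big) = 0$, the Euler--Lagrange equation of the weighted functional $\int y_d^{\,s} L(\nabla\psi)\,dy$ with $L(p) = (1+|p|^2)/p_d$ smooth and uniformly convex for $p$ near $e_d$, so that the operator in \eqref{e:hodograph-plan} is uniformly elliptic for $\nabla\psi$ close to $e_d$. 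The free boundary condition $|\nabla w|^2 = 1$ becomes the conormal (Neumann-type) condition $\psi_{y_d}^2 - |\nabla_{y'}\psi|^2 = 1$ on $\{y_d = 0\}$, which is precisely $\mathcal{B}(\nabla\psi) = 0$ there, so that the singular right-hand side of \eqref{e:hodograph-plan} is self-consistently admissible. Writing $\psi = y_d + \varphi$ (hence $\varphi(0) = 0$, $\nabla\varphi(0) = 0$), equation \eqref{e:hodograph-plan} linearizes, as $\nabla\varphi \to 0$, to $\Delta\varphi + \tfrac{s}{y_d}\partial_d\varphi = 0$, i.e. $L_s\varphi = 0$ with $L_s u := y_d^{-s}\dive(y_d^{\,s}\nabla u)$.

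\emph{Step 2: weighted Schauder and bootstrap.} The heart of the argument is a Schauder theory for \eqref{e:hodograph-plan}. Starting from the known bound $\psi \in C^{1,\alpha}$, I would: (a) prove interior and boundary Schauder estimates for the model operator $L_s$ on $\{y_d > 0\}$, with right-hand side of the form $y_d^{-1}g$, $g \in C^\alpha$ vanishing on $\{y_d = 0\}$, in the Hölder scale adapted to $L_s$ (tangential second derivatives controlled in $C^\alpha$, the degenerate normal behaviour governed by the fundamental solution of $L_s$), together with the matching conormal estimate; (b) run a frozen-coefficient perturbation argument transferring these to \eqref{e:hodograph-plan}, using that $\mathcal{A}(\nabla\psi)$ is close to $\mathrm{Id}$ and that $\mathcal{B}(\nabla\psi)$ --- vanishing both where $\nabla\psi = e_d$ and along $\{y_d = 0\}$ --- is small in the relevant norm once $\psi$ is close to $y_d$; (c) differentiate \eqref{e:hodograph-plan} tangentially and in $y_d$ and iterate, upgrading $\psi$ to $C^{k,\alpha}$ for all $k$, hence $\psi \in C^\infty$ up to $\{y_d = 0\}$. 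In particular the free boundary $\{x_d = \psi(x',0)\}$ is a smooth graph near $x_0$, which is (i). Since $\psi$ is now a smooth diffeomorphism with $\psi_{y_d} > 0$, its inverse $x \mapsto (x', w(x))$ is smooth, so $w \in C^\infty(\overline{\O_u}\cap B_\rho)$; and as $\partial\O_u$ is smooth, $\mathrm{dist}(\cdot, \partial\O_u)$ is smooth in a one-sided neighbourhood of $x_0$ and has the same unit gradient as $w$ along $\partial\O_u$, so $w/\mathrm{dist}(\cdot,\partial\O_u)$ is smooth and positive up to $\partial\O_u$, whence $u/\mathrm{dist}(x,\partial\O_u)^\beta = (w/(\beta\,\mathrm{dist}(x,\partial\O_u)))^\beta$ is smooth and positive up to $\partial\O_u$; this is (ii).

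\emph{Main obstacle.} The decisive difficulty is Step 2(a)--(b): developing the Schauder theory for $L_s$ --- and for its quasilinear perturbations \eqref{e:hodograph-plan} with the singular forcing $\tfrac{s}{y_d}\mathcal{B}(\nabla\psi)$ --- uniformly over the whole range $s \in (-1,+\infty)$. For $s \in (-1,1)$, i.e. $\gamma \in (-2,2/3)$, the weight $y_d^{\,s}$ is $A_2$-Muckenhoupt and one may lean on the classical degenerate-elliptic regularity theory for such weights; but for $s \ge 1$, i.e. $\gamma \in [2/3,2)$, the weight is super-degenerate --- it is not $A_2$, and the only bounded solutions of $L_s\varphi = 0$ depending on $y_d$ alone are constants --- so the appropriate function spaces, the trace and conormal structure on $\{y_d = 0\}$, and the a priori estimates must all be set up from scratch and carefully matched with the degenerate right-hand side. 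This is exactly where a single, exponent-independent argument covering all $\gamma \in (-2,2)$ requires the most work.
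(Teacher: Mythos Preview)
Your proposal is correct and follows essentially the same approach as the paper: the hodograph transform in $w$ leading to the divergence-form quasilinear equation $\dive(y_d^s\,\nabla_p L(\nabla\psi))=0$ with $L(p)=(1+|p|^2)/p_d$, followed by a Schauder theory for such weighted operators, and Hadamard's lemma for (ii). The one place where the paper is more specific than your sketch is the bootstrap in the normal direction: rather than differentiating in $y_d$ (which is delicate against the weight), the paper treats $\varphi:=\partial_{p_d}L(\nabla\psi)$ directly, reads the interior equation as the first-order ODE $\partial_d(y_d^s\varphi)=y_d^s f$ with $f$ already $C^{k-1,\alpha}$ from the tangential step, integrates it, and then recovers $\psi_{y_d}$ from $(\nabla_{y'}\psi,\varphi)$ via the implicit function theorem using the uniform convexity of $L$.
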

Although \cref{t:main-AP} is stated in terms of local minimizers (see \cref{def:definition3.2}),  
the proof actually holds for regular solutions of the corresponding Euler-Lagrange equations, in the sense of \cref{def:definition2.1}. See \cref{prop:k-implies-k+1} and \cref{l:fbcondition} for the precise statements.\vspace{-0.35cm}\\

Our approach to prove \cref{t:main-AP} is to apply an hodograph transformation (see the seminal work of Kinderlehrer and Nirenberg \cite{KinderlehrerNirenberg1977:AnalyticFreeBd}) in a neighborhood of regular points, which reformulates the higher regularity problem 
as a Schauder-type estimates for degenerate quasilinear PDEs with Neumann boundary condition. More precisely, we consider 
the change of coordinates $\Phi(x',x_d):=(x',w(x',x_d))$
induced by the function $w$, and we show the existence of a $C^{1,\alpha}$-regular function $h:B_\delta\cap \{x_d\ge0\} \to \R$ satisfying
$$
w(x',h(x',x_d))=\beta u^{1/\beta}(x',h(x',x_d))= x_d,
$$
so that the free boundary 
is given by the graph of the trace of $h$ over $\{x_d=0\}$. 
If we assume that $w$ is a local minimizer of $\mathcal{E}_s$ (resp.~$u$ a local minimizer of $\mathcal{J}_\gamma$), then the hodograph transform $h$ is a local minimizer of 
$$
\mathcal{F}(\varphi):= \int_{B_\delta^+}x_d^{s}\, F(\nabla \varphi)\,dx,\quad \mbox{where } F(p):=\frac{|p|^2+1}{p_d}, 
$$
and the minimality is formulated in terms of outer variations. Therefore, the hodograph transform $h$ satisfies the following degenerate quasilinear PDE 
\bea\label{e:quasil-intro}
\mathrm{div}\left(x_d^s\, DF(\nabla h)\right)=0 \quad\mbox{in }B_\delta^+,\qquad \lim_{x_d\to 0^+}x_d^s\, DF(\nabla h)\cdot e_d = 0 \quad\mbox{on }B_\delta'.
\eea Here we denoted $B_\delta^+:=B_\delta\cap\{x_d>0\}$ and $B_\delta':=B_\delta\cap\{x_d=0\}$. 
Notice that:
\begin{itemize}
    \item[(i)] for $\gamma \in (-2, 2/3)$, i.e.,~$s \in (-1,1)$, the weight $x_d^s$ lies in the $A_2$-Muckenhoupt;
    \item[(ii)] for $\gamma \in [2/3, 2)$, i.e.,~$s\geq 1$, the weight is merely integrable.
\end{itemize} 
Finally, being $x'\mapsto h(x',0)$ a local parametrization of the free boundary, we deduce the claimed result from the validity of Schauder-type estimates for the function $h$ (see \cref{t:shauder-quasi} below). \\ \vspace{-0.35cm}

We emphasize that in \cite{ros-restrepo}, Restrepo and Ros-Oton adopt a different strategy, focusing on the PDE satisfied by the ratio of partial derivatives of a solution (see also \cite{DeSilvaSavin:higer-boundary-harnack-AC,DeSilvaSavin:higer-boundary-harnack-obstacle} in which this method was originally introduced in the context of the Alt-Caffarelli problem and the obstacle problem). 

\subsection{Degenerate quasilinear PDEs}
The regularity theory for degenerate divergence-form PDEs has been extensively developed over the past decades, both in the setting of $A_2$-Muckenhoupt weights \cite{kufneropic,fabeskenigserapioni} and in the so-called superdegenerate regime \cite{TTV}.

Our approach builds upon the analysis carried out by Terracini, Vita, and the second author in \cite{TTV}, where a Schauder-type theory is established for linear equations with integrable weights. However, to the best of our knowledge, a corresponding theory for quasilinear operators in the same setting is currently unknown. For this reason, our regularity result is formulated for general quasilinear PDEs, as we believe this result is of independent interest.
\begin{theorem}\label{t:shauder-quasi}
        Let $s>-1$, $k\in\N_{\ge2}$ and $\alpha\in(0,1)$. Let $v\in C^{1,\alpha}(\overline{B_1^+})$ be a solution of\medskip
\bea
    \mathrm{div}(x_d^s\,DF(\nabla v))=0\quad\text{in } B_1^+, \qquad
    \lim_{x_d\to 0^+}x_d^{s}\, D F(\nabla v)\cdot e_d =0\quad\text{on } B_1',
\eea for some $F:E \subset \R^d\to\R$ with $(\nabla v)(\overline{B_1^+})\subset E$, where $E\subset\R^d$ is a connected bounded open set. Assume that $F\in C^{k,\alpha}(E)$ is uniformly convex in $E$, i.e.,~there exist $0<\lambda\leq \Lambda<+\infty$ such that 
$$
\lambda\mathrm{Id}\le D^2 F(p)\le \Lambda \mathrm{Id}\quad\text{for every }p\in E.
$$
Then $v\in C^{k,\alpha}(\overline{B_{1/2}^+})$ and 
$$
\|v\|_{C^{k,\alpha}(\overline{B^+_{1/2}})}\le C, 
$$ 
for some constant $C>0$ depending only on $d, s, k, \alpha, \lambda, \Lambda, \|F\|_{C^{k,\alpha}(E)}, \|v\|_{C^{1,\alpha}(\overline{ B_1^+})}$.
    \end{theorem}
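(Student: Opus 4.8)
\medskip

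The plan is to run a bootstrap argument on the difference quotients in the tangential directions, combined with the equation to recover the normal derivatives, using the linear Schauder theory from \cite{TTV} as the engine at each step. The starting point is the observation that, since $v\in C^{1,\alpha}(\overline{B_1^+})$ and $F\in C^{k,\alpha}(E)$ with $(\nabla v)(\overline{B_1^+})\Subset E$, the matrix field $A(x):=D^2F(\nabla v(x))$ is of class $C^{0,\alpha}(\overline{B_1^+})$, uniformly elliptic with constants $\lambda,\Lambda$; moreover the weighted equation $\mathrm{div}(x_d^s DF(\nabla v))=0$ has the structure of a linear equation $\mathrm{div}(x_d^s A(x)\nabla(\partial_i v))=\mathrm{div}(x_d^s \mathbf{g}_i)$ after differentiating formally in a tangential direction $e_i$ ($i<d$), where the right-hand side $\mathbf{g}_i$ involves lower-order terms. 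I would make this rigorous via incremental quotients $\delta_{h,i}v$: these satisfy a weighted linear equation with $C^{0,\alpha}$ coefficients (a convex combination of $D^2F$ along the segment) and conormal boundary condition $\lim_{x_d\to 0^+}x_d^s(\text{coeff})\nabla\delta_{h,i}v\cdot e_d=0$ on $B_1'$, because the Neumann condition is preserved under tangential differencing. Applying the $C^{1,\alpha}$ weighted Schauder estimate of \cite{TTV} to $\delta_{h,i}v$ uniformly in $h$ gives $\partial_i v\in C^{1,\alpha}(\overline{B_{1/2}^+})$ for each $i<d$, i.e.\ all pure-tangential and mixed second derivatives $\partial_{ij}v$ with $i<d$ are $C^{0,\alpha}$ up to the boundary.

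\medskip

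The second step is to recover $\partial_{dd}v$. Expanding the equation in non-divergence form away from $\{x_d=0\}$ gives
\[
F_{p_dp_d}(\nabla v)\,\partial_{dd}v \;=\; -\sum_{(i,j)\ne(d,d)}F_{p_ip_j}(\nabla v)\,\partial_{ij}v \;-\; \frac{s}{x_d}\,F_{p_d}(\nabla v).
\]
The first sum on the right is now known to be $C^{0,\alpha}(\overline{B_{1/2}^+})$ by Step~1, and $F_{p_dp_d}\ge\lambda>0$, so the only potential issue is the singular term $\frac{s}{x_d}F_{p_d}(\nabla v)$. The key point — which is exactly the mechanism behind the conormal (Neumann) boundary condition — is that $x_d^s F_{p_d}(\nabla v)$ has a vanishing normal derivative on $B_1'$, so $F_{p_d}(\nabla v)$ vanishes to the appropriate order at $\{x_d=0\}$; quantitatively, writing out $\lim_{x_d\to0^+}x_d^s DF(\nabla v)\cdot e_d=0$ together with the $C^{1,\alpha}$ regularity already established shows $F_{p_d}(\nabla v)=o(x_d)$ and in fact $F_{p_d}(\nabla v)/x_d$ extends to a $C^{0,\alpha}$ function on $\overline{B_{1/2}^+}$ (one differentiates $F_{p_d}(\nabla v)$ in $x_d$, uses $\partial_{dd}v$ computed from the remaining terms, and closes the argument; alternatively one rewrites the whole equation as a genuinely non-degenerate problem in the good variable $\nabla v$). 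Hence $\partial_{dd}v\in C^{0,\alpha}(\overline{B_{1/2}^+})$ and we conclude $v\in C^{2,\alpha}(\overline{B_{1/2}^+})$.

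\medskip

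The third step is the inductive upgrade from $C^{m,\alpha}$ to $C^{m+1,\alpha}$ for $2\le m\le k-1$. Assuming $v\in C^{m,\alpha}$, the coefficient field $A(x)=D^2F(\nabla v(x))$ is then $C^{m-1,\alpha}$ (using $F\in C^{k,\alpha}$ and $m-1\le k-1$), so differentiating the linear equation for $\delta_{h,i}v$ a total of $m-1$ more tangential times and applying the \emph{higher-order} weighted Schauder estimate of \cite{TTV} (the $C^{m,\alpha}$ version, with the conormal condition again preserved under tangential differencing) yields that all derivatives $\partial^\beta v$ with $\beta_d\le 1$ are in $C^{m,\alpha}$; the derivatives with $\beta_d\ge2$ are then recovered by differentiating the non-divergence form of the equation as in Step~2, again exploiting that the singular factor $s/x_d$ is compensated by the vanishing of $F_{p_d}(\nabla v)$ to first order. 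Iterating up to $m=k-1$ gives $v\in C^{k,\alpha}(\overline{B_{1/2}^+})$, and tracking constants through each application of the linear estimate produces the stated dependence on $d,s,k,\alpha,\lambda,\Lambda,\|F\|_{C^{k,\alpha}(E)},\|v\|_{C^{1,\alpha}(\overline{B_1^+})}$.

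\medskip

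I expect the main obstacle to be the rigorous treatment of the singular term $\tfrac{s}{x_d}F_{p_d}(\nabla v)$ when $s\ge 1$ (the superdegenerate regime), where the weight is merely integrable and the naive a priori bounds are weakest: one must show that the boundary condition forces enough vanishing of $F_{p_d}(\nabla v)$ so that the quotient $F_{p_d}(\nabla v)/x_d$ inherits $C^{k-2,\alpha}$ regularity rather than just being bounded. This is where the structure inherited from \cite{TTV} — namely that the conormal condition is the natural boundary condition making the weighted operator self-adjoint and that $x_d^s$-weighted solutions automatically have the right even/odd expansion at $\{x_d=0\}$ — does the essential work. A secondary technical point is the careful handling of the incremental quotients near $\{x_d=0\}$: since differencing is only in tangential directions, the domain of definition is preserved and no boundary-layer correction is needed, which is why the hodograph/Neumann formulation is so convenient here.
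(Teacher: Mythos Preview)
Your overall architecture --- tangential difference quotients to upgrade $\partial_i v$ for $i<d$ via the linear theory in \cite{TTV}, then recovery of the normal derivative from the equation, then iterate --- is exactly the paper's strategy. The first step is fine and matches the paper verbatim.

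The gap is in Step~2, your handling of the singular term $\tfrac{s}{x_d}F_{p_d}(\nabla v)$. Your claim that the boundary condition together with the $C^{1,\alpha}$ regularity already in hand forces $F_{p_d}(\nabla v)=o(x_d)$ is not justified: for $s>0$ the condition $x_d^sF_{p_d}(\nabla v)\to 0$ is vacuous, and for $s\in(-1,0)$ it only gives vanishing of order $o(x_d^{-s})$, which is weaker than $o(x_d)$. Your fallback (``differentiate $F_{p_d}(\nabla v)$ in $x_d$, use $\partial_{dd}v$ from the remaining terms, close the argument'') is circular as written, since $\partial_d\big(F_{p_d}(\nabla v)\big)$ contains $F_{p_dp_d}(\nabla v)\,\partial_{dd}v$, the very quantity you are trying to control.

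The paper breaks this circularity cleanly. Set $\varphi:=DF(\nabla v)\cdot e_d$. Expanding the divergence, the equation reads
\[
\varphi_d+\frac{s}{x_d}\,\varphi \;=\; -\sum_{i=1}^{d-1}\partial_i\big(DF(\nabla v)\cdot e_i\big)\;=:\;f,
\]
and $f\in C^{k-1,\alpha}$ is already known from the tangential step. This is a first-order linear ODE in $x_d$ with the explicit solution
\[
\varphi(x',x_d)=\frac{1}{x_d^s}\int_0^{x_d}t^s f(x',t)\,dt,
\]
using the boundary condition as initial datum. A short lemma (Lemma~2.6 in the paper, taken from \cite{TTV}) shows that this integral operator propagates $C^{k-1,\alpha}$ regularity to $\varphi_d$; combined with the tangential regularity of $\varphi$, one gets $\varphi\in C^{k,\alpha}$. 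Finally --- and this is a step you do not mention --- since $F_{p_dp_d}\ge\lambda>0$, the global implicit function theorem gives a $C^{k,\alpha}$ map $H$ with $v_d=H(\nabla_{x'}v,\varphi)$, so $v_d\in C^{k,\alpha}$. This avoids ever needing to prove directly that $F_{p_d}(\nabla v)/x_d$ is H\"older, and works uniformly for all $s>-1$, including the superdegenerate range $s\ge 1$ that you flag as worrisome.
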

\subsection{Stable solutions of the Alt-Phillips problem for $\gamma \in (-2,0)$} 
In the Alt-Phillips problem, the classification of minimizing cones, i.e.,~homogeneous minimizers with isolated singularity, is a challenging open problem, at least in dimension $d\ge3$. Even for the Alt-Caffarelli problem ($\gamma=0$) the classification is only partially understood (see \cite{CaffarelliJerisonKenig04:NoSingularCones3D,DeSilvaJerison09:SingularConesIn7D,JerisonSavin15:NoSingularCones4D}). A key tool in addressing these questions is the concept of stability, which plays a central role in ruling out singularities by establishing the rigidity of singular cones in low dimensions (see \cite{CaffarelliJerisonKenig04:NoSingularCones3D}). In this direction, in \cite{thomas} Karakhanyan and Sanz-Perela recently computed the second variation for the Alt-Phillips problem with positive exponent.\\\vspace{-0.35cm}

The following is the main result concerning the stability condition under inner variations for minimizing cones in the case of negative exponents.
The smoothness of the regular part of the free boundary proved in \cref{t:main-AP} allows for second-order expansions of local minimizers, which are essential for computing the second variation.

\begin{theorem}\label{t:sing.min.cones}
    Let $\gamma \in (-2,0)$ and $u\in C^{0,\beta}(\R^d)$ be a $\beta$-homogeneous global  minimizer of $\mathcal{J}_\gamma$ in $\R^d$.
    Assume that $\partial\O_u$ is $C^{1,\alpha}$-regular outside the origin, then 
\be\label{e:stable1}
\int_{ \O_u} |\nabla u|^2 \left(|\nabla f|^2 - \mathcal{A}^2_u\,f^2\right)\,dx\geq 0,\quad \mbox{where }\mathcal{A}^2_u:= \frac{|\nabla^2 u|^2}{|\nabla u|^2} - \frac{|\nabla^2 u\nabla u|^2}{|\nabla u|^4},
\ee
for every $f \in C^\infty_c(\R^d\setminus\{0\})$. Equivalently, if $w:=\beta u^{1/\beta}\in C^{0,1}(\R^d)$ is $1$-homogeneous global minimizer of $\mathcal{E}_s$ in $\R^d$, and if $\partial\O_w$ is $C^{1,\alpha}$-regular outside the origin, then 
\be\label{e:stable2}
\int_{ \O_w} w^s|\nabla w|^2\left(|\nabla f|^2 - \mathcal{A}^2_w\, f^2\right)\,dx \geq 0,\quad \mbox{where }\mathcal{A}^2_w:= \frac{|\nabla^2 w|^2}{|\nabla w|^2} - \frac{|\nabla^2 w\nabla w|^2}{|\nabla w|^4},
\ee
for every $f \in C^\infty_c(\R^d\setminus\{0\})$. 
\end{theorem}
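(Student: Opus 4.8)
The plan is to derive the stability inequality as the second-order term in the expansion of $\mathcal{J}_\gamma$ along a family of domain perturbations generated by a compactly supported vector field, exploiting the smoothness of $\partial\Omega_u$ away from the origin furnished by \cref{t:main-AP}. Since $f\in C^\infty_c(\R^d\setminus\{0\})$, its support stays in the region where $u$ is a genuine smooth solution of $\Delta u=\tfrac\gamma2 u^{\gamma-1}$ up to the $C^\infty$ free boundary, so all the integrations by parts below are licit. I would work with the $u$-formulation first and obtain \eqref{e:stable2} afterwards by the change of variables $w=\beta u^{1/\beta}$, using $\Omega_u\equiv\Omega_w$, $|\nabla w|^2 = u^{2(1/\beta - 1)}|\nabla u|^2$, and the fact that $\mathcal{A}^2_w$ and $\mathcal{A}^2_u$ are both the squared norm of the second fundamental form of the level sets (equivalently, the sum of squares of the principal curvatures of $\{u=t\}$), hence coincide as geometric quantities on $\Omega_u=\Omega_w$; this makes the two inequalities literally the same statement up to the conformal factor $w^s = u^{s/\beta}=u^\gamma$ in the measure.

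The key steps, in order. \emph{Step 1: the right competitor.} The natural inner variation is $u_t(x):=u(\Psi_t^{-1}(x))$ where $\Psi_t(x)=x+t\,\xi(x)$ for a vector field $\xi\in C^\infty_c(\R^d\setminus\{0\};\R^d)$; but the homogeneity of $u$ and the scaling of the functional force a careful choice. Following the Caffarelli–Jerison–Kenig scheme for Alt–Caffarelli, I would instead perturb the free boundary normally by $t f\nu$ and extend $u$ accordingly, or, more robustly, take the outer variation $u+t\varphi$ combined with an inner variation so that the admissibility constraint ($u\ge 0$, boundary behavior \eqref{e:visc.DeS.u}) is preserved to second order; the function $f$ in \eqref{e:stable1} will enter as the normal component of the displacement, and the test function should be built from $f$ times $|\nabla u|$ so that the boundary term produced by the free boundary condition cancels. \emph{Step 2: first variation vanishes.} Using the Euler–Lagrange system \eqref{e:EL-u} and the free boundary condition $|\nabla u|^2=u^\gamma$ on $\partial\Omega_u$ (in the second-order viscosity sense, which by \cref{t:main-AP} holds classically), the first variation of $\mathcal{J}_\gamma$ along this family is zero — this is where minimality is used qualitatively. \emph{Step 3: compute the second variation.} Differentiating twice and simplifying with the PDE, the bulk term $\int u^\gamma\ind_{\{u>0\}}$ and the Dirichlet term combine; after integrating by parts and using that on a smooth level-set foliation $\Delta u = \operatorname{div}(|\nabla u|\nu) = |\nabla u| H + \partial_\nu|\nabla u|$ (with $H$ the mean curvature of $\{u=t\}$), the curvature quantity $\mathcal{A}^2_u = |\nabla^2 u|^2/|\nabla u|^2 - |\nabla^2 u\,\nabla u|^2/|\nabla u|^4$ emerges as exactly the sum of the squares of the principal curvatures appearing in the Jacobi-field/Simons-type identity for the foliation. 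The weight $|\nabla u|^2$ in \eqref{e:stable1} is the Jacobian factor converting the surface second variation on each level set into the bulk integral over $\Omega_u$. \emph{Step 4: translate to $w$.} Redo Steps 1–3 for $\mathcal{E}_s$, or just substitute; the measure picks up $w^s$, the Dirichlet weight becomes $w^s|\nabla w|^2$, and $\mathcal{A}^2_w=\mathcal{A}^2_u$ as noted, giving \eqref{e:stable2}.

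I expect the \textbf{main obstacle} to be Step 1 together with the justification at the free boundary: one must choose the variation so that (a) competitors remain nonnegative and the free-boundary scaling constant $c_\beta$ in \eqref{e:visc.DeS.u} is respected to second order, and (b) the delicate second-order expansion of $u$ at $\partial\Omega_u$ — which exists precisely because of the $C^\infty$ regularity in \cref{t:main-AP}, and \emph{not} merely from $C^{0,\beta}$ continuity — is used to show that the boundary contributions to the second variation vanish (for $\gamma\in(-2,0)$ the potential $u^\gamma$ blows up at the free boundary, so the cancellation of the would-be singular boundary terms is genuinely nontrivial and is the place where the negative-exponent case differs from $\gamma\ge 0$). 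Once these boundary terms are shown to cancel, the remaining identity is an algebraic manipulation with the Bochner/Reilly-type formula on the level sets, and the nonnegativity \eqref{e:stable1} follows from the minimality (second variation $\ge 0$).
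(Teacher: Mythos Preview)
Your outline is broadly on the right track---domain variations, second variation $\ge 0$ by minimality, and the Sternberg--Zumbrun identity to produce $\mathcal{A}^2$---and you correctly pinpoint the main difficulty (showing the would-be singular boundary contributions vanish when $\gamma<0$). However, the paper's execution differs from yours in two essential respects.

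First, the paper works entirely in the $w$-variable and uses \emph{only} the inner variation with the specific vector field $\xi=\tfrac{\nabla w}{|\nabla w|}f$ (normal displacement of magnitude $f$). There is no outer variation $u+t\varphi$ and no CJK-type field $\tfrac{\nabla w}{|\nabla w|^2}\varphi$; in fact \cref{rem:differences} explains that for $\gamma\in(-2,0)$ the CJK route---computing $\delta^2\mathcal{E}_s$ along $\xi=\tfrac{\nabla w}{|\nabla w|^2}\varphi$ and then integrating by parts to reach the Sternberg--Zumbrun form---is obstructed precisely because the weight $w^s$ is singular on $\partial\Omega_w$ and the intermediate divergence term cannot be split. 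Your ``CJK scheme'' suggestion would therefore get stuck at exactly the point you flagged. The paper instead computes $\delta^2\mathcal{E}_s(w)[\tfrac{\nabla w}{|\nabla w|}f]$ directly from the change-of-variables expansion (\cref{lemma:computation-2-var}), obtaining three pieces $f_1,f_2,f_3$ with $f_2=w^s|\nabla w|^2|\nabla f|^2$ already in final form; the remaining algebra (\cref{lemma:computation-2-var2}) reduces $f_1+f_3$ to $-w^s|\nabla w|^2\mathcal{A}_w^2 f^2$ plus a divergence $\mathrm{div}(F)$ whose flux vanishes because $F$ extends continuously to zero on $\partial\Omega_w$ thanks to the $C^\infty$ regularity from \cref{prop:k-implies-k+1}.

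Second, working with $u$ first, as you propose, is technically awkward: $u\in C^{0,\beta}$ is not even Lipschitz near $\partial\Omega_u$, so the inner-variation calculus and the divergence manipulations are cleanest for $w$ (which is $C^\infty$ up to the boundary). The paper derives \eqref{e:stable2} for $w$ and declares \eqref{e:stable1} equivalent via the substitution you described; your observation that $\mathcal{A}^2_u=\mathcal{A}^2_w$ (same level sets, and the tangential-gradient term is invariant under the reparametrization) is correct and is what makes that equivalence immediate.

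In short: the strategy you outlined would work if you (i) drop the outer-variation/CJK detour, (ii) compute the second inner variation for $\xi=\tfrac{\nabla w}{|\nabla w|}f$ directly, and (iii) carry out the calculation in $w$ rather than $u$.
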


We stress that \cref{t:sing.min.cones} is stated under the assumptions of homogeneity and global minimality, 
but the proof actually applies in a more general setting. In fact, in \cref{def:definition3.1} we introduce the notion of stable solutions, and in \cref{prop:stability-cond} we prove a local stability condition without requiring homogeneity. Finally, in \cref{rem:holds-for-global}, we observe that \cref{t:sing.min.cones} holds even for global stable solutions, regular outside the origin, in the sense of \cref{def:definition3.2} and \cref{def:definition2.1}.\\
\vspace{-0.35cm}

For the computation of the stability condition, it is more convenient to rely on the variational formulation associated to $\mathcal{E}_s$, even though \eqref{e:stable1} and \eqref{e:stable2} are equivalent. Indeed, given $f \in C^\infty_c(\R^d \setminus \{0\})$, the condition \eqref{e:stable2} is deduced by computing the second derivative of the map 
\be\label{e:variations}
t \mapsto \mathcal{E}_s\left(w\circ (\mathrm{Id}+t \xi)^{-1}\right),\quad\mbox{where }\xi:=\frac{\nabla w}{|\nabla w|}f.
\ee
The choice of this variations is dictated by the strong degeneracy of the problem close to regular points. Indeed, while in the case of the Alt-Caffarelli problem ($\gamma = 0$) and for positive exponents $\gamma\in(0,2)$ it is natural to consider variations of the form $\xi := \frac{\nabla w}{|\nabla w|^2}f$ and integrate by parts on the free boundary to reveal the role of mean curvature, in the case of negative exponents the presence of the weight $w^s$, with $s \in (-1, 0)$, 
obstructs such approach and masks the contribution of the mean curvature.
In this sense, the variations in \eqref{e:variations} are more close  used in the study of minimal surfaces, where the vector field is typically chosen of the form $\xi=\nu f$, with $\nu$ denoting the normal to the surface (see \cite{maggi}). We postpone the main differences between our stability condition and those in \cite{CaffarelliJerisonKenig04:NoSingularCones3D,thomas} for non-negative exponents to \cref{rem:differences}.

Notice that the quantity $\mathcal{A}_u^2$ in \cref{t:sing.min.cones} controls the second fundamental form of the level sets of the solution at a given point (the same applies to $\mathcal{A}_w$). Precisely, as pointed out by Sternberg and Zumbrun in \cite[Lemma 2.1]{sz98}, 
\be\label{szformula}
\mathcal{A}^2_u = |A|^2 + \frac{|\nabla_T |\nabla u||^2}{|\nabla u|^2}
\ee
where $\nabla_T$ denotes the tangential gradient along the level set of $u$ passing through $x\in \O_u$ and $|A|^2$ is the squared norm of the second fundamental form of the same level set.\\
\vspace{-0.35cm}

\subsection{Axially symmetric cones} Using the stability condition in \cref{t:sing.min.cones}, we provide an initial result concerning the existence of minimizers with an isolated singularity. Since in the regime of negative exponents the problem interpolates between the Alt-Caffarelli problem and the theory of minimal surfaces, it is natural to construct examples of singular free boundaries that are either axially symmetric (see \cite{DeSilvaJerison09:SingularConesIn7D} for $\gamma = 0$) or of Lawson’s type (see \cite{Simons:minimal-varieties, davini} for minimal surfaces). 

We emphasize that recently different groups of authors have focused on the study of singular point in the case of positive exponent. Indeed, in \cite{thomas} they ruled out the existence of singular cones that are axially symmetric in low dimensions, when the exponent $\gamma\in(0,2/3)$. On the other hand, in \cite{savinyu-altphillips-stable, savinyu-altphillips-concentration} the authors construct both radial and axially symmetric cones and show that these solutions are minimizers for $\gamma$ sufficiently close to $1$.

In the following theorem, we show that for negative exponents, minimizing axially symmetric cones for the Alt-Phillips functional are trivial in low dimensions.
\begin{theorem}\label{thm:axially}
Let $\gamma \in (-2,0)$ and $u \in C^{0,\beta}(\R^d)$ be a $\beta$-homogeneous global minimizer of $\mathcal{J}_\gamma$ in $\R^d$. Assume that $u$ is axially symmetric and 
$$
\text{either}\quad d\le6\quad \text{or}\quad d=7\ \text{and}\ \gamma<\frac{10-8\sqrt{5}}{11}\approx -0.7171.
$$
Then $u$ is one-dimensional.
\end{theorem}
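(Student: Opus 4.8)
The plan is to exploit the axial symmetry to reduce the stability inequality \eqref{e:stable1} to a one-dimensional spectral problem on a geodesic of the sphere, then test it with a carefully chosen competitor and contradict the homogeneity of $u$ unless the cone is trivial. Write $d = 1 + (d-1)$ with coordinates $x = (y, z)$, $y\in\R^{d-1}$, $z\in\R$, and assume $u$ depends only on $r = |y|$ and $z$; after a blow-up of the cross-section the free boundary $\partial\Omega_u$ is, away from the axis $\{r=0\}$ and the origin, a smooth $\beta$-homogeneous graph, so it is the cone over a smooth curve $\Gamma\subset \mathbb{S}^{d-1}$ contained in a two-plane-section. The first step is to write $\mathcal{A}^2_u$ explicitly in these coordinates: by \eqref{szformula} the quantity $\mathcal{A}^2_u = |A|^2 + |\nabla_T|\nabla u||^2/|\nabla u|^2$, where $A$ is the second fundamental form of the level set of $u$. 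For an axially symmetric function the level sets are hypersurfaces of revolution, whose principal curvatures are the curvature $\kappa$ of the profile curve in the $(r,z)$-plane together with $(d-2)$ copies of the "rotational" curvature $\cos\theta/r$ (where $\theta$ is the angle the profile makes with the axis); so $|A|^2 = \kappa^2 + (d-2)\cos^2\theta/r^2$. This is exactly the mechanism by which the dimension $d$ enters, in complete analogy with the Simons cone computation in \cite{Simons:minimal-varieties} and with the Alt-Caffarelli axially symmetric analysis.

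**Second**, I would localize the inequality \eqref{e:stable1} to the slice and pass to polar-type coordinates adapted to the homogeneity. Since $u$ is $\beta$-homogeneous, $|\nabla u|^2$ is $(2\beta-2)$-homogeneous and $\mathcal{A}^2_u$ is $(-2)$-homogeneous, so writing $f(x) = \rho^{a}\phi(\sigma)$ with $\rho = |x|$, $\sigma = x/|x|$, and integrating the radial variable, the stability inequality becomes a quadratic form on the sphere of the Hardy type
\[
\int_{\mathbb{S}^{d-1}\cap\Omega_u} |\nabla_\sigma u|_{\mathbb{S}}^{2}\Big(|\nabla_{\mathbb{S}}\phi|^2 + c(a)\,\phi^2 - \mathcal{A}^2_{u}\,|x|^2\,\phi^2\Big)\,d\sigma \ge 0,
\]
after optimizing the radial exponent $a$ to make the $c(a)$ term as negative as possible, precisely $c(a) = -\big(a + \tfrac{d-2+2(\beta-1)}{2}\big)^2 + \text{const}$, the constant depending on $d,\beta$. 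Here the weight $|\nabla_\sigma u|^2_{\mathbb{S}}$ restricted to the sphere is, up to the $1$-homogeneous profile, nothing but a weight of the form $(\text{dist to }\Gamma)^{2\beta-2}$ near the free boundary and $\sim(\sin\text{(angle to axis)})^{?}$ near the axis, and one must check these are locally integrable — this is where the constraint $\beta > $ something, equivalently $\gamma$ bounded away from the bad range, will show up, and it is consistent with the numerology in the statement.

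**Third** — and this is the crux — I would choose the test function $\phi$ on the cross-section profile. Following Caffarelli--Jerison--Kenig \cite{CaffarelliJerisonKenig04:NoSingularCones3D} and the minimal-surface template, the natural competitor is built from the normal vector field of the profile curve, or equivalently from $|\nabla u|$ itself times a radial cutoff: one tests with (a truncation of) $\phi = |\nabla u|\,|x|$ near the singularity, uses that the level sets of a homogeneous solution satisfy a Jacobi-type identity (differentiating the Euler--Lagrange equation \eqref{e:EL-u} in the rotational and dilation directions produces Jacobi fields), and reduces the claim to a pointwise inequality of the form
\[
(d-2)\frac{\cos^2\theta}{r^2} + \kappa^2 + \frac{|\nabla_T|\nabla u||^2}{|\nabla u|^2} \;>\; \big(\text{gradient energy of the optimal }\phi\big)
\]
along the profile, which fails — hence forces $\nabla_T|\nabla u| = 0$, $\kappa=0$ and $\cos\theta=0$ — precisely when $d$ is small or, in the borderline dimension $d=7$, when the algebraic threshold on $\gamma$ from balancing the Hardy constant against $(d-2)$ is met; the number $\tfrac{10-8\sqrt5}{11}$ is exactly the root of the resulting quadratic in $\beta$ (equivalently in $\gamma$, via $\beta = 2/(2-\gamma)$). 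Once $\kappa = 0$ and the profile is perpendicular to the axis one gets that the cone $\Omega_u$ is a half-space, so by the characterization of the one-dimensional solution $u_0(t) = c_\beta (t^+)^\beta$ (from \eqref{e:visc.DeS.u}) $u$ is one-dimensional.

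**The main obstacle** I anticipate is twofold: first, making the reduction to the spherical eigenvalue problem rigorous across the axis $\{r=0\}$, where the cross-section is not a smooth manifold but the weight $|\nabla u|^2$ may vanish or blow up — this requires a capacity/cutoff argument showing the axis is removable for the quadratic form, and it is here that the hypothesis that $\partial\Omega_u$ is $C^{1,\alpha}$ away from the origin (not away from the axis) must be used together with the smoothness upgrade of \cref{t:main-AP}; second, the sharp arithmetic: extracting $\tfrac{10-8\sqrt5}{11}$ demands carrying the dimension-$7$ Hardy constant $\big(\tfrac{d-2+2\beta-2}{2}\big)^2$ and the curvature weight $(d-2)$ through the optimization without slack, so the test function cannot be a generic cutoff but must be the exact minimizer of the one-dimensional form, likely an explicit power of the distance to $\Gamma$ matched to the Jacobi field, in the spirit of \cite{davini}.
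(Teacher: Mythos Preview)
Your general strategy --- exploit homogeneity to produce a Hardy-type constant, then balance it against a curvature term carrying the factor $(d-2)$ from the rotational directions --- is correct, and your Hardy exponent $(d-2+2(\beta-1))/2=(d+s-2)/2$ together with the resulting $d=7$ numerology is right. But the implementation you sketch would not close, because the two key choices (the Jacobi field and the subsequent test weight) are the wrong ones.

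The paper does \emph{not} pass to the sphere for this theorem. It works directly in $\R^d$ and first derives, from the stability inequality in the form \eqref{eq:ripresa}, the weighted Hardy-type inequality (\cref{proposition:d-2})
\[
\int_{\Omega_w} w^s\, w_\tau^2\Big(|\nabla\eta|^2 - (d-2)\,\frac{\eta^2}{\tau^2}\Big)\,dx \ \ge\ 0,\qquad \eta\in C^\infty_c(\R^d),
\]
where $\tau=|x'|$ is the distance to the symmetry axis. The crucial input is the choice $\varphi=w_\tau\,\eta$ --- the \emph{rotational} derivative, not $|\nabla u|$ or a dilation field as you propose --- together with the Jacobi-type identity
\[
\Delta w_\tau + s\,\frac{\nabla w\cdot\nabla w_\tau}{w} + \frac{\Delta w}{w}\,w_\tau \;=\; (d-2)\,\frac{w_\tau}{\tau^2}\quad\text{in }\Omega_w\cap\{\tau>0\},
\]
which is precisely where $(d-2)$ enters. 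A boundary divergence term must be shown to vanish; this uses $\partial_\nu w_\tau = H\,w_\tau$ on $\partial\Omega_w$ and the integrability of $w^s$ with $s>-1$.

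The second test is then a power of the distance to the \emph{axis}, $\eta=\tau^{-\theta}\zeta_R$ (regularized near $\{\tau=0\}$), not a power of the distance to the free boundary $\Gamma$ as you suggest. Plugging in and optimizing $\theta$ yields the two constraints $d-2-\theta^2>0$ and $d+s-2-2\theta<0$, which are simultaneously satisfiable iff $\sqrt{d-2}>(d+s-2)/2$; the threshold $\gamma<(10-8\sqrt5)/11$ at $d=7$ follows.

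So the concrete gap is: your test function $\phi=|\nabla u|\,|x|$ is the dilation field, not the rotational one, and it will not produce the $(d-2)/\tau^2$ potential; and ``power of the distance to $\Gamma$'' should be ``power of the distance to the axis''. Your curvature decomposition $|A|^2=\kappa^2+(d-2)\cos^2\theta/r^2$ is correct and is the geometric reason for the $(d-2)$, but the argument is executed through the Jacobi equation for $w_\tau$, not pointwise on the profile. Finally, the case $d=2$ is handled separately via a logarithmic cutoff in \eqref{e:stable1}, and the spherical eigenvalue criterion of \cref{p:stability-criterion-eigen} is used only in the $\gamma\to-2$ asymptotics, not here.
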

Although \cref{thm:axially} is stated under the minimality assumption, the proof actually applies to the case of global stable solutions, regular outside the origin, in the sense of \cref{def:definition3.2} and \cref{def:definition2.1} (see \cref{proposition:d-2}). We also point out that the homogeneity assumption is not necessary (see \cref{rem:without-homo}). Finally, we observe that \cref{thm:axially} also holds for minimizers or stable solutions of the functional $\mathcal{E}_s$, under the corresponding assumptions.

\subsection{Asymptotic results as $\gamma \to -2$}
As already mentioned above, in \cite[Theorem 2.4]{DeSilvaSavinNegativePower}, the authors establish a $\Gamma$-convergence result of the (normalized) Alt-Phillips energies to the perimeter functional, as $\gamma \to -2$. This result provide a connection between minimizing free boundaries for the Alt-Phillips problem with area-minimizing hypersurfaces, in the same spirit of the theory of phase transition for the Allen-Cahn equation \cite{cahnhilliard,allencahn,modicamortola}.

It is natural to ask whether this connection between the problems persists at the level of stable solutions. For instance, the singular perturbation problem associated with stable solutions of the Allen-Cahn equation has been studied only recently  
(see \cite{Tonegawa,le1,TonWick,Hiesmayr,Gaspar}), where the limit has been characterized in terms of stable minimal hypersurfaces. On the other hand, we refer to \cite{KamburovWang,ChFeFiJo} for a recent literature concerning stable solutions of free boundary problems.\\

In this paper, we also investigate the limiting behavior of stable solutions of the Alt-Phillips problem as $\gamma \to -2$, by showing that under $C^{2,\alpha}$-regularity, uniform in $\gamma$ (see \eqref{e:melacito}), the limiting interface is a stable minimal hypersurface. 

First, in \cref{p:stability-criterion-eigen}, we provide a variational characterization of the stability condition of \cref{t:sing.min.cones} in terms of an eigenvalue problem involving a weighted Laplace-Beltrami operator on $\mathbb{S}^{d-1}$. Precisely, given a $\beta$-homogeneous solution $u$ and $\Sigma_u:=\Omega_u\cap \mathbb{S}^{d-1}$, we set 
 $$\lambda_s(\Sigma_u):= \min_{\substack{\varphi \in C^\infty(\mathbb{S}^{d-1})\\ \varphi\not\equiv0}}\ddfrac{\int_{\Sigma_u}  |\nabla u|^2\big(|\nabla_S \varphi|^2 - \mathcal{A}_u^2 \varphi^2\big)\,d\HH^{d-1}}{\int_{\Sigma_u} |\nabla u|^2 \varphi^2 \,d\HH^{d-1}}\,,$$
 where $\nabla_S$ is the tangential gradient on $\mathbb{S}^{d-1}$. Then, we show that the stability condition is equivalent to require that $$\lambda_s(\Sigma_u)\geq -\left(\frac{d+s-2}{2}\right)^2.$$
Finally, we address the asymptotic behavior of this criterion as $\gamma \to -2$, recovering the analogue result for stable minimal surfaces, thus obtaining that the limit interface is a stable minimal surface. For a discussion on the general limit and the role of the associated quadratic forms of the stability condition in \cref{t:sing.min.cones}, we refer to \cref{rem:quadratic}.\\
A valuable direction for future research, though beyond the scope of the present work, would be to expand on this analysis and extend the asymptotic result in \cite{Tonegawa,TonWick} to the case of stable solutions for the Alt-Phillips problem with negative exponents.

\subsection{Structure of the paper} 
In \cref{s:smooth}, we apply the hodograph map to regular solutions and we rewrite the problem in terms of a degenerate quasilinear elliptic PDE. Then we establish corresponding Schauder estimates in \cref{t:shauder-quasi}, which implies the $C^\infty$ regularity of the free boundary in \cref{t:main-AP}. In \cref{s:variations}, we compute both the first and second variations of the functional $\mathcal{J}_\gamma$, leading to the stability condition in \cref{t:sing.min.cones}. In \cref{sec:axially}, we use the stability condition to prove that axially symmetric cones are trivial in low dimensions, i.e.,~\cref{thm:axially}.
Finally, in \cref{sec:asymptotic} we prove the variational characterization of the stability condition and we discuss the asymptotic of stable cones as $\gamma\to-2$.

\section{\texorpdfstring{$C^\infty$}{Cinfty} regularity of the free boundary}\label{s:smooth} 
In this section, we show that the free boundary conditions are satisfied in a pointwise sense at regular points and then we rephrase the problem by an hodograph transformation. Therefore, the higher regularity of the free boundary follows by Schauder-type estimates for degenerate quasilinear PDEs with Neumann boundary conditions.

For the sake of clarity, we start by introducing the notion of regular solutions. For simplicity, the definition is given in terms of $w$, though an equivalent one holds for $u$.

\begin{definition}[Regular solutions]\label{def:definition2.1}
    Let $\gamma \in (-2,2)$ and $s:=\beta \gamma>-1$. We say that $w$ is a regular solution in an open set $D$ (unless otherwise specified, we take $D:=B_1$)
of
\be\label{e:hodo-pre-transform}
\Delta w = \frac{s}{2}\frac{1-|\nabla w|^2}{w}\quad\mbox{in }\O_w\cap D, \qquad 
\begin{cases}
|\nabla w|^2=1 &\mbox{on }\partial \O_w \cap D, \text{ if $s\geq 0$}\\
w^{s}(|\nabla w|^2-1)=0 &\mbox{on }\partial \O_w \cap D, \text{ if $s < 0$}
\end{cases}
\ee
if, for some $\alpha>-s$,
$$
\text{$w\in C^{1,\alpha}(\overline\Omega_w\cap D)$ and the free boundary $\partial\Omega_w$ is $C^{1,\alpha}$-regular in $D$} 
$$
and \eqref{e:hodo-pre-transform} is satisfied in a pointwise sense. Precisely, for $s\in (-1,0)$
\bea\label{e:condizione-puntuale}
\lim_{t \to 0^+} w^s(x_0-t\nu_{x_0})\Big(|\nabla w(x_0-t\nu_{x_0})|^2-1\Big)=0,\quad\mbox{for every }x_0 \in \partial \O_w \cap D,
\eea
where $\nu_{x_0}$ is the outer normal to $\partial\O_w$ at $x_0$.
\end{definition}
The following is the main regularity result, which ultimately implies \cref{t:main-AP}.
\begin{proposition}\label{prop:k-implies-k+1}
    Let $\gamma \in (-2,2)$ and $w$ be a regular solution in the sense of \cref{def:definition2.1}. If $0\in\partial\Omega_w$, then, there exists $r>0$ such that
$$
\text{$w\in C^{\infty}(\overline\Omega_w\cap B_r)$ and the free boundary $\partial\Omega_w$ is smooth in $B_r$.} 
$$
    \end{proposition}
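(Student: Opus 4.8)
The plan is to use the hodograph transformation described in the introduction to convert higher regularity of the free boundary into an interior-up-to-the-boundary Schauder estimate for a degenerate quasilinear PDE, and then invoke Theorem~\ref{t:shauder-quasi} to bootstrap. First, since $w$ is a regular solution in the sense of Definition~\ref{def:definition2.1} with $0 \in \partial\Omega_w$, after a rotation we may assume the outer normal at the origin is $-e_d$; by the $C^{1,\alpha}$ regularity of $w$ and of $\partial\Omega_w$ near $0$, the function $\partial_d w$ is bounded below by a positive constant in a small half-neighborhood of the origin inside $\overline{\Omega}_w$ (this uses the pointwise free boundary condition $|\nabla w|^2 = 1$ on $\partial\Omega_w$ together with continuity of $\nabla w$, so that $\nabla w(0) = e_d$). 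Consequently the map $\Phi(x', x_d) := (x', w(x', x_d))$ is a $C^{1,\alpha}$ diffeomorphism from a neighborhood of $0$ in $\overline{\Omega}_w$ onto a neighborhood of $0$ in $\{x_d \ge 0\}$, and its inverse has the form $(y', y_d) \mapsto (y', h(y', y_d))$ for some $h \in C^{1,\alpha}(\overline{B_\delta^+})$ with $h$ defined on $\{y_d = 0\}$ parametrizing the free boundary.

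Next I would derive the equation satisfied by $h$. Using the standard hodograph computation (interchanging the role of $x_d$ and $w$), the PDE $\Delta w = \frac{s}{2}\frac{1-|\nabla w|^2}{w}$ for $w$ in $\Omega_w$ transforms into the degenerate quasilinear equation $\mathrm{div}(y_d^{\,s}\, DF(\nabla h)) = 0$ in $B_\delta^+$, with $F(p) = \frac{|p|^2 + 1}{p_d}$, while the free boundary condition $|\nabla w|^2 = 1$ (resp.\ the pointwise limit condition for $s<0$) becomes the Neumann-type condition $\lim_{y_d \to 0^+} y_d^{\,s}\, DF(\nabla h) \cdot e_d = 0$ on $B_\delta'$. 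One should check $F$ is smooth and uniformly convex on a neighborhood $E$ of $\nabla h(\overline{B_\delta^+})$: since $\nabla w(0) = e_d$ one has $\partial_d h(0) = 1$ and $\nabla' h(0) = 0$, so by continuity $\nabla h$ stays in a small ball around $e_d$ where $F$ is smooth and $D^2 F$ is bounded between two positive multiples of the identity; this is exactly the hypothesis of Theorem~\ref{t:shauder-quasi}. (These computations and the equivalence between weak/pointwise and distributional formulations of the transformed boundary condition are the content referenced as Lemma~\ref{l:fbcondition} and the surrounding discussion.)

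Then I would apply Theorem~\ref{t:shauder-quasi} with $k = 2$ to conclude $h \in C^{2,\alpha}(\overline{B_{\delta/2}^+})$. This improves the regularity of $F$ on the (now possibly larger, but still controlled) range of $\nabla h$, and in particular the free boundary $\{y_d = 0\} \ni y' \mapsto h(y',0)$ is now $C^{2,\alpha}$. One then iterates: knowing $h \in C^{k,\alpha}$ gives $F \in C^{\infty}$ on the relevant set and the $C^{1,\alpha}$ bound needed to re-enter the theorem, so Theorem~\ref{t:shauder-quasi} upgrades $h$ to $C^{k+1,\alpha}$. By induction $h \in C^\infty(\overline{B_r^+})$ for some $r>0$, hence the free boundary $\partial\Omega_w$ is smooth near $0$. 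Finally, since $\Phi$ and $\Phi^{-1}$ are smooth diffeomorphisms, $w = $ (second component of $\Phi$) composed appropriately is smooth up to $\overline{\Omega}_w$ near $0$; this also yields smoothness of $w = \beta u^{1/\beta}$ and of $u/\mathrm{dist}(x,\partial\Omega_u)^\beta$ as in part (ii) of Theorem~\ref{t:main-AP}.

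The main obstacle is the degeneracy/singularity of the weight $y_d^{\,s}$: for $s \ge 1$ it is not even in the Muckenhoupt class $A_2$ (only integrable), so classical weighted Schauder theory does not apply directly, and one must rely on the superdegenerate Schauder theory of \cite{TTV} as extended to quasilinear operators in Theorem~\ref{t:shauder-quasi}. A secondary delicate point is verifying that the transformed Neumann condition holds in the precise distributional/pointwise sense required by Theorem~\ref{t:shauder-quasi} — in particular translating the pointwise limit condition \eqref{e:condizione-puntuale} through the hodograph change of variables — and ensuring that the range $\nabla h(\overline{B_r^+})$ stays inside the convexity set $E$ along the whole induction, which follows because each step only shrinks the neighborhood and $\nabla h$ stays close to $e_d$.
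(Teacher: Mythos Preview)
Your proposal is correct and follows essentially the same route as the paper: hodograph transform to get $h\in C^{1,\alpha}(\overline{B_\delta^+})$ solving $\mathrm{div}(y_d^s DF(\nabla h))=0$ with the Neumann condition (this is Lemma~\ref{l:hsolves}, not Lemma~\ref{l:fbcondition}), verify uniform convexity of $F$ near $e_d$ via $D^2F(e_d)=2\mathrm{Id}$, localize so that $(\nabla h)(\overline{B_\sigma^+})\subset E$, and then invoke Theorem~\ref{t:shauder-quasi}. One minor point: the external iteration you describe (apply the theorem with $k=2$, then re-enter with $k=3$, etc.) is unnecessary, and your remark that each step ``improves the regularity of $F$'' is off---$F$ is already $C^\infty$ on $E$, so a single application of Theorem~\ref{t:shauder-quasi} for each $k$ (with the same $C^{1,\alpha}$ input) already yields $h\in C^{k,\alpha}$ for every $k$; the bootstrap happens inside Proposition~\ref{prop:quasi}, not at the level of this proof.
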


Before addressing the problem of higher regularity, we first point out that local minimizers of the Alt-Phillips functional are regular solution, close to regular free boundary points. Such observation was essential contained in the $\eps$-regularity theorem proved in \cite{DeSilvaSavinPositivePower,DeSilvaSavinNegativePower}.

\begin{lemma}\label{l:fbcondition}
      Let $\gamma \in (-2,2)$ and $w$ be a local minimizer to $\mathcal{E}_s$, with $s:=\beta \gamma$. If $x_0\in\partial\Omega_w\cap B_1$ is a regular free boundary point, then $w$ is a regular solution in a ball $B_r(x_0)$, in the sense of \cref{def:definition2.1}, for some $r>0$.
\end{lemma}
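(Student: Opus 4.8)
The plan is to show that \cref{l:fbcondition} follows by combining the $\eps$-regularity theorem of \cite{DeSilvaSavinPositivePower,DeSilvaSavinNegativePower} with the Euler--Lagrange structure of minimizers, checking carefully that all the requirements of \cref{def:definition2.1} are met. The first step is to recall that, by definition of regular point, after a suitable rotation and rescaling the free boundary $\partial\O_w$ is sufficiently flat in a small ball $B_r(x_0)$; the $\eps$-regularity/improvement of flatness results in \cite{DeSilvaSavinPositivePower} (for $s\geq 0$) and \cite{DeSilvaSavinNegativePower} (for $s\in(-1,0)$) then give that $\partial\O_w$ is $C^{1,\alpha}$-regular in $B_r(x_0)$ and that $w$ extends to a $C^{1,\alpha}(\overline{\O_w}\cap B_r(x_0))$ function, with $\alpha$ as large as needed — in particular one may take $\alpha>-s$, which is the regularity threshold required in \cref{def:definition2.1}. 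This is essentially a citation, so the content is in checking the quantitative hypotheses and fixing $r$.

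The second step is to verify that $w$ solves the interior PDE $\Delta w = \tfrac{s}{2}\tfrac{1-|\nabla w|^2}{w}$ in $\O_w\cap B_r(x_0)$ in the classical sense. This follows from standard inner/outer variations for the functional $\mathcal{E}_s$ away from the free boundary: since $w>0$ and is locally Lipschitz there, the first variation of $\int w^s(|\nabla w|^2+1)$ yields the weak form of the equation, and elliptic regularity (the weight $w^s$ is smooth and bounded away from $0$ and $\infty$ on compact subsets of $\O_w$) bootstraps $w$ to $C^\infty_\loc(\O_w)$, so the equation holds pointwise.

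The third and genuinely delicate step is the pointwise free boundary condition. For $s\geq 0$ one must show $|\nabla w|^2 = 1$ on $\partial\O_w\cap B_r(x_0)$ holds in the pointwise (one-sided limit) sense; for $s\in(-1,0)$ one must establish the limit \eqref{e:condizione-puntuale}, i.e. $w^s(x_0-t\nu_{x_0})(|\nabla w(x_0-t\nu_{x_0})|^2-1)\to 0$ as $t\to 0^+$. The natural route is: once the free boundary is known to be $C^{1,\alpha}$ and $w\in C^{1,\alpha}$ up to it, the blow-up of $w$ at each free boundary point $y\in\partial\O_w\cap B_r(x_0)$ is the one-dimensional profile $w_0(t)=(t\cdot\nu_y)^+$ (this is precisely what the $\eps$-regularity theorem of \cite{DeSilvaSavinNegativePower} produces, via the viscosity condition \eqref{e:visc.DeS.w}), so $|\nabla w(y)|=1$ and the condition holds in the viscosity/limit sense; for $s\in(-1,0)$ one additionally uses that $w(x_0-t\nu_{x_0})\sim t$ so $w^s\sim t^s$, and the rate of convergence $|\nabla w(x_0-t\nu_{x_0})|^2-1 = o(t^{-s})$ coming from the $C^{1,\alpha}$ bound with $\alpha>-s$ kills the blow-up of the weight. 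The main obstacle I anticipate is exactly this compatibility between the weight exponent $-s$ and the Hölder exponent $\alpha$: one needs $\alpha>-s$ to make $t^{\alpha}\cdot t^{s}=t^{\alpha+s}\to 0$, which is why the improvement-of-flatness argument must be iterated to reach $\alpha$ above that threshold, and one should cite the relevant higher-flatness statement (or observe that the argument of \cite{DeSilvaSavinNegativePower} gives $C^{1,\alpha}$ for every $\alpha\in(0,1)$ at regular points) rather than merely the first $C^{1,\alpha}$ conclusion. Assembling these three steps with a single choice of $r>0$ gives that $w$ is a regular solution in $B_r(x_0)$ in the sense of \cref{def:definition2.1}.
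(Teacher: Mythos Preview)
Your proposal is correct and follows essentially the same approach as the paper. Both identify that the only nontrivial point is, for $s\in(-1,0)$, upgrading the $C^{1,\delta}$-regularity from the $\eps$-regularity theorem to $C^{1,\alpha}$ with $\alpha>-s$, after which the limit \eqref{e:condizione-puntuale} follows from $w^s(|\nabla w|^2-1)=O(t^{s+\alpha})\to 0$; the paper makes this upgrade explicit by invoking the smoothness of solutions to the linearized problem $\mathrm{div}(x_d^s\nabla v)=0$ (via \cite[Theorem 1.1]{STV}) to obtain improvement-of-flatness at rate $1+\alpha$ for any $\alpha\in(0,1)$ and then iterating (citing \cite{cv24}), which is exactly the mechanism you gesture at when you say ``the argument of \cite{DeSilvaSavinNegativePower} gives $C^{1,\alpha}$ for every $\alpha\in(0,1)$.''
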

\begin{proof} 
It is not restrictive to assume that $x_0=0$. Then, up to rescaling the problem, by \cite[Theorem 2.3]{DeSilvaSavinNegativePower} there exists $\delta \in (0,1)$ such that $ w\in C^{1,\delta}(\overline \Omega_w\cap B_r)$ and $\partial \O_w$ is $C^{1,\delta}$-regular in $B_r$, for some $r>0$. 
Since in the case of positive exponents $\gamma \in [0,2)$ the conclusion is immediate, we address the case $\gamma \in (-2,0)$. Notice that, for negative $\gamma$, the result follows once we show that 
$$
\text{$w\in C^{1,\alpha}(\overline\Omega_w\cap B_r)$ and the free boundary $\partial\Omega_w$ is $C^{1,\alpha}$-regular in $B_r$}, 
$$
for some $\alpha>-s$. In fact, being $|\nabla w|=1$ on $\partial \O_w\cap B_r$, it implies that
$$
\lim_{t\to0^+}\left|w^s(x_0-t\nu_{x_0})\Big(|\nabla w(x_0-t\nu_{x_0})|^2-1\Big)\right| \leq C \lim_{t \to 0^+}t^{s+\alpha}=0.
$$
As already observed in the proof of \cite[Proposition 7.2]{DeSilvaSavinNegativePower}, the linearization procedure allows to rewrite the regularity problem for $\eps$-flat free boundaries in terms of Schauder-type estimates for the degenerate linear PDE
$$
\mathrm{div}\left(x_d^s\, \nabla v\right)=0 \quad\text{in }B_1^+,\qquad \lim_{x_d\to 0^+}x_d^s\, \nabla v\cdot e_d = 0 \quad\text{on }B_1'.
$$
Since solutions of such linearized system are indeed smooth up to the boundary (see \cite[Theorem 1.1]{STV}), we can show that for every $\alpha \in (0,1)$ there exists a dimensional radius $\rho>0$, small enough, such that 
$$\left(x\cdot\nu-\rho^{1+\alpha}\eps\right)_+\le w(x)\le \left(x\cdot \nu+\rho^{1+\alpha}\eps\right)_+\quad\text{for every } x\in B_\rho.$$
Then, fixed $\alpha>-s$, the result follows by a standard iteration argument (see for instance \cite{cv24}, where such strategy is exploited for obtaining $C^{2,\alpha}$-estimates in a one-phase type problem).
\end{proof}

   \subsection{The hodograph transform}\label{s:subs.hodograph} In this section we write the hodograph transformation of a solution of the Alt-Phillips problem (see also \cite{fk24,aks25} in which this map was already exploited in the case of positive exponent $\gamma \in (2/3,2)$).

Given $x \in \{x_d\geq 0\}$ it is convenient to write $x=(x',x_d)\in \R^{d-1}\times \R$. Without loss of generality, we can assume that $w$ is a regular solution in $B_1$, in the sense of \cref{def:definition2.1}, and that $0 \in \partial\O_w$ and $\nabla w(0)= e_d$.
Therefore, there exists $\rho>0$ sufficiently small, such that the map   
$$
\Phi:\overline\Omega_w\cap B_\rho\to \R^d\cap\{y_d\ge0\},\quad \Phi(x',x_d):=(x',w(x',x_d)),\quad 
$$
is bijective from $\overline\Omega_w\cap B_\rho$ onto an open neighborhood of the origin in the upper half-space $\{y_d\ge0\}$. Clearly, it is not restrictive to replace $\Phi(\overline\Omega_w\cap B_\rho)$ with $\{y_d\ge0\}\cap B_\delta$, for some $\delta>0$. 
Thus, $\Phi$ maps the free boundary $\partial\Omega_w\cap B_\rho$ into $B'_\delta$. On the other hand, the inverse function 
\be\label{hodographdef}\Phi^{-1}:\overline{B_\delta^+}\to \overline\Omega_w\cap B_\rho,\quad \Phi^{-1}(y',y_d):=(y',h(y',y_d)),\quad 
\ee 
is well-defined. 
Being $w\in C^{1,\alpha}(\overline\Omega_w\cap B_\rho)\cap C^\infty_{\mathrm{loc}}(\Omega_w\cap B_\rho)$, we get that $h: 
\overline{B_\delta^+} \to \R$ is $C^{1,\alpha}$-regular in $\overline{B_\delta^+}$ and $C^\infty$-regular in $B_\delta^+$.
Throughout the paper, we refer to such $h$ as the hodograph transform of $w$. 
By differentiating the identity
$$
h(x',w(x',x_d))=x_d, \quad\mbox{for every }x=(x',x_d) \in \overline\O_w \cap B_\delta, 
$$
we can derive the system satisfied by the hodograph transform $h$. For the sake of simplicity, in the computations that follow, the derivatives of $w$ are evaluated at $x\in \Omega_w\cap B_\delta$, and those of $h$ at $y:=(x',w(x',x_d))\in B_\delta^+$. Given $i,j=1,\dots,d-1$, we get
\begin{align}\label{e:computations}
\begin{aligned}
 & w_d =\frac{1}{h_d},\qquad w_i=-\frac{h_i}{h_d},\\[0.5em]
     &w_{ij}= -\frac{h_{ij}}{h_d}
     +\frac{ h_j h_{id}}{h_d^2}+\frac{h_{jd}h_i}{h_d^2}
     -\frac{h_{dd} h_i h_j}{h_d^3},\qquad w_{id}=-\frac{h_{id}}{h_d^2}+ \frac{h_{dd}\, h_i}{h_d^3},\qquad w_{dd}=- \frac{h_{dd}}{h_d^3}.
\end{aligned}
\end{align}
Although this is a classical fact (see e.g.,~\cite[Remark 3.1]{depsv}), we emphasize that the hodograph transform $h$ contains all the information of $\partial \O_w$, indeed $h$ is a regular function that coincides on $B_\rho'$ with the parametrization of the free boundary $\partial \O_w$. Thus, the derivatives of the graph of $\partial\Omega_w$ coincide with the partial derivatives $h_i$, for $i = 1, \dots, d-1$.

More precisely, if $\partial \O_w \cap B_\rho$ is the graph of the function $g : B_\rho'\to \R$ in the $e_d$-direction, then 
$$
w(x',g(x'))=0\quad \mbox{for every }(x',0)\in B_{\rho}'.
$$
Then, by differentiating the identity above, we obtain for $i=1,\dots,d-1$
\be\label{e:deriv.graph}
g_i(x')=-\frac{w_i(x',g(x'))}{w_d(x',g(x'))}= h_i(x',w(x',g(x'))) = h_i(x',0)\quad\mbox{for every }(x',0) \in B_{\rho}',
\ee
where the second equality follows from \eqref{e:computations} and the last from the definition of $g$. Since $e_d$ is the interior normal to $\partial \O_w$ at the origin, we have $\nabla h(0)=e_d$.
By exploiting \eqref{e:computations}, we have 
$$
\frac{s}{2}\frac{1-|\nabla w|^2}{w}= \frac{s}{2}\frac{1}{y_d}\left(1-\frac{1+\sum_{i=1}^{d-1} h_i^2}{h_d^2}\right).
$$
Hence, the interior condition \eqref{e:hodo-pre-transform} become
\be\label{eq:v-system-1} - \frac{h_{dd}}{h_{dd}^3}\left(1 + \sum_{i= 1}^{d-1} h_i^2\right)
+ 2\sum_{i= 1}^{d-1} \frac{h_{i}h_{id}}{h_d^2} -  \sum_{i=1}^{d-1}\frac{h_{ii}}{h_d} =\frac{s}{2y_d}\left(1-\frac{1+\sum_{i=1}^{d-1}h_i^2}{h_d^2}\right)\quad\text{in }B_\delta^+.
\ee 
Similarly, the free boundary condition in \eqref{e:hodo-pre-transform} can be rewritten as 
\be\label{eq:v-system-2}
\lim_{y_d \to 0^+} y_d^s\left(1-\frac{1+\sum_{i=1}^{d-1}h_i^2}{h_d^2}\right)=0\quad\text{on }B_\delta'.
\ee 

We now collect all the computations and we show that the hodograph transform satisfies a degenerate quasilinear elliptic PDEs with a Neumann boundary condition. For the sake of completeness, we refer to \cite{fabeskenigserapioni,kufneropic,TTV} for a complete discussion on weighted Sobolev spaces and the correct notion of weak solutions.
\begin{lemma}\label{l:hsolves} 
Let $w$ be a regular solution, in the sense of \cref{def:definition2.1}. Then its hodograph transform $h$, defined in \eqref{hodographdef}, is a solution of
\be\label{eq:quasi-v}
\mathrm{div}\left(x_d^s\, DF(\nabla h)\right)=0 \quad\text{in }B_\delta^+,\qquad \lim_{x_d\to 0^+}x_d^s\, DF(\nabla h)\cdot e_d = 0 \quad\text{on }B_\delta',
\ee
with \be\label{e:nostraF} F(p):=\frac{|p|^2 + 1 }{p_d}.
\ee
Moreover, if $w$ is a local minimizer of $\mathcal{E}_s$, then the hodograph transform $h$ minimizes  
$$\mathcal{F}(\varphi):= \int_{B_\delta^+}x_d^{s}\, F(\nabla \varphi)\,dx,$$
among competitors in $H^1(B_\delta^+,x_d^s\,dx)$ with same trace on $(\partial B_\delta)^+$. 

\end{lemma}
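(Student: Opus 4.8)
The plan is to verify the two assertions separately, using the pointwise computations \eqref{e:computations} for the PDE part and the change-of-variables formula for the variational part. First I would observe that with $F(p) = (|p|^2+1)/p_d$ one computes directly
\[
D_i F(p) = \frac{2p_i}{p_d}\ \ (i=1,\dots,d-1), \qquad D_d F(p) = \frac{2p_d}{p_d} - \frac{|p|^2+1}{p_d^2} = 1 - \frac{|p|^2+1}{p_d^2}.
\]
Hence $x_d^s\,DF(\nabla h)\cdot e_d = x_d^s\bigl(1 - (1+\sum_i h_i^2)/h_d^2\bigr)$, so the Neumann condition in \eqref{eq:quasi-v} is exactly \eqref{eq:v-system-2}, which I have already derived from the free boundary condition in \eqref{e:hodo-pre-transform}. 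For the interior equation I would expand $\mathrm{div}(x_d^s DF(\nabla h)) = x_d^s\bigl(\sum_{i<d} D_i(D_iF(\nabla h)) + D_d(D_dF(\nabla h))\bigr) + s x_d^{s-1} D_dF(\nabla h)$, carry out the chain rule using $h_{ij}, h_{id}, h_{dd}$, and match the resulting expression against \eqref{eq:v-system-1} after multiplying that identity by $-2$ (note the factor discrepancy: the left side of \eqref{eq:v-system-1} equals $-\tfrac12 x_d^{-s}\mathrm{div}(x_d^s DF(\nabla h))$ up to sign, and the right side is $\tfrac{s}{2y_d}D_dF(\nabla h)$, so the two sides agreeing is precisely $\mathrm{div}(x_d^s DF(\nabla h)) = 0$). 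Since $h$ is $C^\infty$ in $B_\delta^+$ these identities hold classically there; the weak formulation up to $B_\delta'$ then follows by testing against $\varphi \in C_c^\infty(B_\delta)$ and integrating by parts, the boundary term being controlled by \eqref{eq:v-system-2} together with the $C^{1,\alpha}$-regularity of $h$ with $\alpha > -s$, exactly as in the estimate displayed in the proof of Lemma~\ref{l:fbcondition}.

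For the variational statement I would compute $\mathcal{E}_s(w)$ on $\Omega_w \cap B_\rho$ by the substitution $y = \Phi(x) = (x', w(x))$, whose inverse is $x = (y', h(y))$. The Jacobian of $\Phi^{-1}$ is $\partial x_d/\partial y_d = h_d$ (the off-diagonal block contributing nothing to the determinant since the first $d-1$ coordinates are fixed), so $dx = h_d\, dy$. Using $|\nabla w|^2 = (1 + \sum_i h_i^2)/h_d^2$ and $w = y_d$ under this change of variables (from $w(y', h(y)) = y_d$), one gets
\[
\int_{\Omega_w\cap B_\rho} w^s(|\nabla w|^2 + 1)\,dx = \int_{B_\delta^+} y_d^s\left(\frac{1+\sum_i h_i^2}{h_d^2} + 1\right) h_d\, dy = \int_{B_\delta^+} y_d^s\, \frac{1 + \sum_i h_i^2 + h_d^2}{h_d}\,dy = \mathcal{F}(h),
\]
since $1 + \sum_i h_i^2 + h_d^2 = 1 + |\nabla h|^2$, which is precisely $\int_{B_\delta^+} y_d^s F(\nabla h)$. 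The same computation applies to any competitor $\varphi \in H^1(B_\delta^+, x_d^s\,dx)$ with the same trace on $(\partial B_\delta)^+$: its ``un-hodographed'' function $\tilde w$, defined as the inverse graph of $\varphi$, agrees with $w$ outside $B_\rho$ and is an admissible competitor for $\mathcal{E}_s$, whence $\mathcal{E}_s(\tilde w) \ge \mathcal{E}_s(w)$ translates to $\mathcal{F}(\varphi) \ge \mathcal{F}(h)$. One should check that the correspondence $\varphi \leftrightarrow \tilde w$ genuinely preserves admissibility and the same-boundary-datum constraint; this is where monotonicity in $p_d$ matters, and one can restrict to competitors with $\varphi_d$ bounded away from $0$ by the $C^{1,\alpha}$-bound near $h$ (a density/truncation argument suffices, since the minimality of $h$ among all competitors then follows from minimality among this dense subclass together with lower semicontinuity of $\mathcal{F}$).

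The main obstacle I anticipate is not the interior algebra — that is a mechanical, if lengthy, verification of the identity between \eqref{eq:v-system-1}, \eqref{eq:v-system-2} and \eqref{eq:quasi-v} — but rather making the passage from ``classical solution in $B_\delta^+$ plus the limiting condition \eqref{eq:v-system-2}'' to ``weak solution in the sense of the weighted space up to $B_\delta'$'' fully rigorous when $s \in (-1,0)$ and the weight degenerates (or when $s \ge 1$ and it is merely integrable). The delicate point is justifying the integration by parts across $\{x_d = 0\}$: one integrates over $\{x_d > \varepsilon\}$, picks up a boundary term $\int_{\{x_d = \varepsilon\}} \varepsilon^s D_dF(\nabla h)\,\varphi$, and must show it vanishes as $\varepsilon \to 0^+$; since $D_dF(\nabla h) = \mathcal{O}(x_d^{\alpha})$ near $B_\delta'$ (because $|\nabla h| \to 1$ with rate $\alpha$, as $|\nabla h(x',0)|^2 = 1 + \sum_i h_i(x',0)^2$ need not equal $1$ — wait, more carefully, $D_dF(\nabla h)\to 0$ at rate dictated by the Hölder modulus of $\nabla h$ and the condition \eqref{eq:v-system-2} itself), the term is $\mathcal{O}(\varepsilon^{s+\alpha}) \to 0$ precisely because $\alpha > -s$. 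This is the same mechanism already used in Lemma~\ref{l:fbcondition}, so I would phrase it by direct analogy, and I would also record that $DF(\nabla h) \in L^1_{\mathrm{loc}}$ against the weight so that all integrals are well-defined to begin with.
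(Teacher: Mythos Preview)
Your proposal is correct and follows essentially the same route as the paper: compute $DF$ explicitly, identify the Neumann condition with \eqref{eq:v-system-2}, expand $\mathrm{div}(x_d^s DF(\nabla h))$ and match it against \eqref{eq:v-system-1}, and for the variational part transform $\mathcal{E}_s$ into $\mathcal{F}$ via the change of variables $y=\Phi(x)$. Two small remarks: (i) in your formula for $D_dF(p)$ you wrote $2p_d/p_d=1$ rather than $2$, though your next line $D_dF(\nabla h)=1-(1+\sum_i h_i^2)/h_d^2$ is nonetheless correct after simplifying $(|\nabla h|^2+1)/h_d^2$; (ii) the paper's own proof of the minimality claim is actually terser than yours---it simply records the identity $\mathcal{E}_s(w)=\mathcal{F}(h)$ and notes that the Euler--Lagrange equations coincide, without working through the competitor correspondence or the $\varepsilon\to0$ boundary-term argument you outline, so your additional care there is not strictly needed to match the paper but is sound.
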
 
\begin{proof}
We divide the proof into two cases. \\\vspace{-0.3cm}

\noindent \textit{Case 1: regular solutions of \eqref{e:hodo-pre-transform}.} First, by direct computation, we have
$$
D F(p) = \frac{2}{p_d}p -\frac{(1+|p|^2)}{p_d^2}e_d,\qquad 
D^2 F(p)=\frac{2}{p_d}\mathrm{Id}-2\frac{e_d\otimes p + p\otimes e_d}{p_d^2}+\frac{2(1+|p|^2)}{p_d^3}e_d\otimes e_d.
$$
Then, since
\be\label{e:utile}
\begin{aligned}
&DF(\nabla h)\cdot e_i = 2\frac{h_i}{h_d}\quad\mbox{for every }i=1,\dots,d-1,\\
&DF(\nabla h)\cdot e_d  = 2-\frac{1+|\nabla h|^2}{h_d^2}=1-\frac{1+\sum_{i=1}^{d-1}h_i^2}{h_d^2}
\end{aligned}
\ee
we get that \begingroup\allowdisplaybreaks\begin{align*} &y_d^{-s}\mathrm{div}\left( y_d^{s} DF(\nabla h)\right)=\sum_{i=1}^{d}\partial_i(DF(\nabla h)\cdot e_i)+s\frac{DF(\nabla h)\cdot e_d}{y_d}\\
&\qquad=2\sum_{i=1}^{d-1}\left(\frac{h_{ii}}{h_d}-\frac{h_{id}h_{i}}{h_d^2}\right)+\partial_d\left(-\frac{1+\sum_{i=1}^{d-1}h_i^2}{h_d^2}\right)+\frac s{y_d}\left(1-\frac{1+\sum_{i=1}^{d-1}h_i^2}{h_d^2}\right)\\
&\qquad=
    2\sum_{i=1}^{d-1}\left(\frac{h_{ii}}{h_d}-\frac{h_{id}h_{i}}{h_d^2}\right)+2\frac{h_{dd}}{h_d^3}-2\left(\sum_{i=1}^{d-1} \frac{h_i h_{id}}{h_d^2}-\frac{h_{dd}h_i^2}{h_d^3}\right)+\frac s{y_d}\left(1-\frac{1+\sum_{i=1}^{d-1}h_i^2}{h_d^2}\right)\\
&\qquad=2\sum_{i=1}^{d-1}\left(\frac{h_{ii}}{h_d}-2\frac{h_{id}h_{i}}{h_d^2}\right)+2\frac{h_{dd}}{h_d^3}\left(1+\sum_{i=1}^{d-1}h_i^2\right)+\frac s{y_d}\left(1-\frac{1+\sum_{i=1}^{d-1}h_i^2}{h_d^2}\right).
    \end{align*}\endgroup 
    Finally, the weighted quasilinear PDE follows by combining the last line with \eqref{eq:v-system-1} in $B_\delta^+$. Similarly, the boundary condition follows by comparing \eqref{eq:v-system-2} with the second line in \eqref{e:utile}.
\\\vspace{-0.3cm}

\noindent \textit{Case 2: local minimizers of $\mathcal{E}_s$.}
First, by \eqref{e:computations}, we have
$$
w^s(x)|\nabla w(x)|^2=  y_d^{s}\,\frac{|\nabla_{x'} h(y)|^2 + 1 }{h_d^2(y)}\quad\mbox{and }\quad\mathrm{det}\nabla \Phi(x) = w_d(x) = \frac{1}{h_d (y)} 
$$
where in both the equality we set $y:=\Phi(x)$. By applying the change of coordinate $y:=\Phi(x)$ to the functional $\mathcal{E}_s$, we get
$$
\int_{B_\rho \cap \O_w}w^s(|\nabla w|^2 + 1)\, dx = \int_{B_\delta^+} y_d^{s}\left(\frac{|\nabla_{x'} h|^2 + 1 }{h^2_d} + 1\right) h_d\,dy
$$
where the last factor comes from the first identity in \eqref{e:computations}. Therefore,
$$
\int_{B_\rho \cap \O_w}w^s(|\nabla w|^2 +1)\, dx = \int_{B_\delta^+} y_d^{s}\,F(\nabla h)\,dy
$$
and the Euler-Lagrange equations coincides with \eqref{eq:quasi-v}.
\end{proof}

\begin{remark}[Degenerate quasilinear problems and convexity]
In the context of degenerate quasilinear operators, it is natural to say that 
\eqref{l:hsolves} is uniformly elliptic if the function $F$ is strictly convex (see \cref{t:shauder-quasi}).
Precisely, when $F$ is given as in \eqref{e:nostraF}, and thus is not convex, we localize the problem so that \eqref{l:hsolves} is uniformly elliptic in a possibly smaller neighborhood of the origin (see the proof of \cref{prop:k-implies-k+1}).
\end{remark}
\begin{remark}[The free boundary condition]\label{r:soddisfazione} 
    In \cite{DeSilvaSavinPositivePower,DeSilvaSavinNegativePower}, the authors derived the free boundary condition associated to the Alt-Phillips problem, both for positive and negative exponents. Precisely, the function $w:=\beta u^{1/\beta}$ satisfies 
    $$
\begin{cases}
|\nabla w|^2=1 &\quad\mbox{on }\partial \O_w \cap B_1, \text{ if $\gamma \geq 0$}\\
w^{s}(|\nabla w|^2-1)=0 &\quad\mbox{on }\partial \O_w \cap B_1, \text{ if $\gamma < 0$}
\end{cases}
$$
in a viscosity sense.
    The understanding of the free boundary condition is fundamental for the proof of an improvement-of-flatness lemma by a linearization argument. Nevertheless, the boundary condition associated to the hodograph transform of $w$ (see the Neumann condition in \eqref{eq:quasi-v}) and the linearized problem arising from the linearization in \cite[Section 7]{DeSilvaSavinNegativePower} suggest that is natural to consider viscosity solution of
$$
w^{s}(|\nabla w|^2-1)=0 \quad\mbox{on }\partial \O_w \cap B_1,
$$
independently on the sign of the exponent $\gamma \in (-2,2)$. 
Moreover, although for positive powers this condition is a priori weaker, the analysis in \cite{TTV} suggests that a posteriori the two conditions are indeed equivalent.
\end{remark}

\subsection{Schauder estimates for degenerate quasilinear PDEs} 
In this section, we present a first result in the setting of Schauder estimates for degenerate quasilinear PDEs. 

Before stating the main Schauder-type estimates, we recall the following lemma from \cite[Lemma 2.4, Remark 2.5]{TTV}.
\begin{lemma}\label{lemma-ttv-utile}
        Let $s>-1$, $k\in \N$ and $\alpha\in (0,1)$. Let $f\in C^{k,\alpha}(\overline {B_1^+})$, and $$\varphi(x):=\frac{1}{x_d^s}\int_0^{x_d}t^sf(x',t)\,dt.$$ Then $\varphi_d\in C^{k,\alpha}(\overline {B_1^+})$ and the following estimate holds
        $$\|\varphi_d\|_{C^{k,\alpha}(\overline {B_{1/2}^+})}\le C\|f\|_{C^{k,\alpha}(\overline {B_{1/2}^+})},$$ for some $C>0$ depending only on $d$, $s$, $k$, $\alpha$.
    \end{lemma}
Ultimately, \cref{t:shauder-quasi} is a consequence of the following proposition. 
    \begin{proposition}\label{prop:quasi}
        Let $s>-1$, $k\in\N_{\ge1}, \alpha\in(0,1)$. Let $v\in C^{k,\alpha}(\overline{B_1^+})$ be a solution of\medskip 
\be\label{eq:problem-quasi}
    \mathrm{div}(x_d^s\,DF(\nabla v))=0\quad\text{in } B_1^+, \qquad
    \lim_{x_d\to 0^+}x_d^{s} D F(\nabla v)\cdot e_d =0\quad\text{on } B_1',
\ee for some $F: E\subset \R^d\to\R$ with $(\nabla v)(\overline{B_1^+})\subset E$, where $E\subset \R^d$ is a connected bounded open set. Assume that $F\in C^{k+1,\alpha}(E)$ is uniformly convex in $E$, i.e.,~there exist $0<\lambda\leq \Lambda<+\infty$ such that 
$$
\lambda\mathrm{Id}\le D^2 F(p)\le \Lambda \mathrm{Id}\quad\text{for every }p\in E.
$$
Then $v\in C^{k+1,\alpha}(\overline{B_{1/2}^+})$ and 
$$
\|v\|_{C^{k+1,\alpha}(\overline{B^+_{1/2}})}\le C, 
$$ 
for some constant $C>0$ depending only on $d, s, k, \alpha, \lambda, \Lambda, \|F\|_{C^{k+1,\alpha}(E)}, \|v\|_{C^{k,\alpha}(\overline{ B_1^+})}$.
    \end{proposition}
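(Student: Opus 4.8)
The plan is to establish the one-step gain of regularity in \eqref{eq:problem-quasi} by treating the tangential and the normal directions separately, and then to obtain \cref{t:shauder-quasi} by iterating this gain $k-1$ times. For the tangential directions I would differentiate the equation and appeal to the linear degenerate Schauder theory of \cite{TTV}; for the normal direction I would read the equation as a first-order ODE in the variable $x_d$ and close the loop with \cref{lemma-ttv-utile}.

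\textbf{Tangential directions.} Fix $i\in\{1,\dots,d-1\}$. Testing the weak (conormal) formulation of \eqref{eq:problem-quasi} against $\partial_i\psi$, $\psi\in C_c^\infty(B_1)$, and integrating by parts in $x_i$ — which commutes with the weight $x_d^s$ and produces no boundary terms — one finds that $v_i$ is a weak solution of
\[
\mathrm{div}\big(x_d^s\,A\,\nabla v_i\big)=0\ \ \text{in }B_1^+,\qquad \lim_{x_d\to0^+}x_d^s\,(A\,\nabla v_i)\cdot e_d=0\ \ \text{on }B_1',\qquad A:=D^2F(\nabla v).
\]
By uniform convexity $\lambda\,\mathrm{Id}\le A\le\Lambda\,\mathrm{Id}$. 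If $k\ge2$, then $\nabla v\in C^{k-1,\alpha}(\overline{B_1^+})$ is Lipschitz, and composing it with $D^2F\in C^{k-1,\alpha}(E)$ gives $A\in C^{k-1,\alpha}(\overline{B_1^+})$ with norm controlled by $\|F\|_{C^{k+1,\alpha}(E)}$ and $\|v\|_{C^{k,\alpha}(\overline{B_1^+})}$; applying the linear weighted Schauder estimates of \cite{TTV} with conormal data (after a standard freezing-of-coefficients argument, if needed, to pass to variable $C^{k-1,\alpha}$ coefficients) I would get $v_i\in C^{k,\alpha}(\overline{B_{3/4}^+})$ quantitatively. If $k=1$, a priori $A$ is only $C^{0,\alpha^2}$ (composition of two $\alpha$-H\"older maps), so the linear theory first gives $v_i\in C^{1,\alpha^2}$; running the next step with exponent $\alpha^2$ yields $v\in C^{2,\alpha^2}$ on a smaller half-ball, after which $\nabla v$ is Lipschitz and the whole argument can be rerun with the full exponent $\alpha$. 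Either way one reaches $v_i\in C^{k,\alpha}(\overline{B_{3/4}^+})$ for all $i\le d-1$, so every second derivative $v_{ij}$ with $(i,j)\ne(d,d)$ belongs to $C^{k-1,\alpha}(\overline{B_{3/4}^+})$.

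\textbf{Normal direction.} Expanding \eqref{eq:problem-quasi} pointwise in $B_1^+$ gives $x_d^s\sum_{j,l}(\partial_{p_jp_l}F)(\nabla v)\,v_{jl}+s\,x_d^{s-1}(\partial_{p_d}F)(\nabla v)=0$. Setting $g:=(\partial_{p_d}F)(\nabla v)$ and $G:=-\sum_{j=1}^{d-1}\partial_j\big[(\partial_{p_j}F)(\nabla v)\big]$, and using $\partial_d g=\sum_l(\partial_{p_dp_l}F)(\nabla v)\,v_{dl}$, this rearranges to $\partial_d(x_d^s g)=x_d^s G$ in $B_1^+$, where $G\in C^{k-1,\alpha}(\overline{B_{3/4}^+})$ by the previous step. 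Since the conormal condition in \eqref{eq:problem-quasi} reads $\lim_{x_d\to0^+}x_d^s g=0$, integrating in $x_d$ gives
\[
g(x',x_d)=\frac{1}{x_d^s}\int_0^{x_d}t^s\,G(x',t)\,dt,
\]
so \cref{lemma-ttv-utile}, applied with $k-1$ in place of $k$, yields $g_d\in C^{k-1,\alpha}(\overline{B_{1/2}^+})$ with $\|g_d\|_{C^{k-1,\alpha}(\overline{B_{1/2}^+})}\le C\|G\|_{C^{k-1,\alpha}(\overline{B_{3/4}^+})}$. On the other hand $g_d=(\partial_{p_dp_d}F)(\nabla v)\,v_{dd}+\sum_{l=1}^{d-1}(\partial_{p_dp_l}F)(\nabla v)\,v_{ld}$; since $(\partial_{p_dp_d}F)(\nabla v)\ge\lambda>0$ lies in $C^{k-1,\alpha}$ and $v_{ld}=\partial_d v_l\in C^{k-1,\alpha}$ by the previous step, solving for $v_{dd}$ shows $v_{dd}\in C^{k-1,\alpha}(\overline{B_{1/2}^+})$ (using that $C^{k-1,\alpha}$ is an algebra stable under reciprocals of functions bounded away from zero). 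Hence all second derivatives of $v$ are in $C^{k-1,\alpha}(\overline{B_{1/2}^+})$, i.e.\ $v\in C^{k+1,\alpha}(\overline{B_{1/2}^+})$, and the quantitative estimate follows by collecting the bounds above.

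\textbf{The main obstacle.} The hard part is the normal direction: the equation controls $v_{dd}$ only through the singular lower-order term $s\,x_d^{-1}(\partial_{p_d}F)(\nabla v)$, and it is exactly the conormal boundary condition — which forces $g=(\partial_{p_d}F)(\nabla v)$ to vanish linearly at $\{x_d=0\}$ — that, via the integral representation and \cref{lemma-ttv-utile}, cancels this singularity and returns a $C^{k-1,\alpha}$ function. Two subsidiary points also require care: the tangential differentiation must be carried out purely at the weak level so that it is legitimate already at $k=1$, where $v_i$ is only H\"older (hence the extra bootstrap through the exponent $\alpha^2$), and, since the weight $x_d^s$ is merely integrable rather than $A_2$ when $s\ge1$, one has to invoke the form of the weighted Schauder theory of \cite{TTV} that covers the whole superdegenerate range, both in the linear estimate above and in \cref{lemma-ttv-utile}.
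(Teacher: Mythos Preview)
Your proposal is correct and follows essentially the same approach as the paper: tangential differentiation combined with the linear theory of \cite{TTV}, then the normal direction via the first-order ODE, the integral representation, and \cref{lemma-ttv-utile}. The only cosmetic differences are that you solve algebraically for $v_{dd}$ by dividing out $(\partial_{p_dp_d}F)(\nabla v)\ge\lambda$, whereas the paper recovers $v_d$ in one stroke via a global implicit function theorem applied to $\varphi:=DF(\nabla v)\cdot e_d$ (writing $v_d=H(\nabla_{x'}v,\varphi)$ with $H\in C^{k,\alpha}$), and that you handle the $k=1$ composition loss $C^{0,\alpha}\circ C^{0,\alpha}\subset C^{0,\alpha^2}$ explicitly through a preliminary bootstrap, a point the paper glosses over.
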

\begin{proof}
 The proof builds on the argument developed in the linear case in \cite{TTV}. As already noted in \cite{TTV}, the bootstrap argument must account for the differing behavior of the operator along the tangential directions $e_i$ for $i = 1, \dots, d-1$, and the vertical direction $e_d$. We mention that all the positive constants $C$ in the proof depend only on $d$, $s$, $k$, $\alpha$, $\lambda$, $\Lambda$, $\|F\|_{C^{k+1,\alpha}(E)}, \|v\|_{C^{k,\alpha}(\overline{ B_1^+})}$.
\vspace{-0,3cm}\\
 
 First, let $A(p):=D^2 F(p): E \to \R^{d\times d}$ be a matrix-valued function, uniformly elliptic in $E$, then we observe that for every $i=1,\ldots,d-1$, the tangential partial derivative $v_i$ solves the following linear degenerate elliptic problem
 \vspace{0.1cm} 
\be\label{eq:problem-quasi-tangential}
\mathrm{div}(x_d^s\, A(\nabla v)\nabla v_i )=0\quad\text{in } B_1^+,\qquad
   \lim_{x_d\to 0^+} x_d^{s}\, A(\nabla v)\nabla v_i\cdot e_d =0\quad\text{on } B_1'.
\ee 
Since the operator is differentiable in the tangential directions, \eqref{eq:problem-quasi-tangential} can be obtained using the difference quotient method. Then, being $A(\nabla v(\cdot))\in C^{k-1,\alpha}(\overline{B_1^+}),$ by \cite{TTV}, we deduce that $v_i\in C^{k,\alpha}(\overline{B_{1/2}^+})$ and \be\label{eq:derivativeiv}\|v_i\|_{C^{k,\alpha}(\overline{B^+_{1/2}})}\le C
\ee 
for some constant $C>0$.
\\\vspace{-0.35cm}

Regarding the normal derivative, set $\varphi:= DF(\nabla v)\cdot e_d$. Then, by the previous part of the proof, since $DF\in C^{k,\alpha}(E;\R^{d\times d})$ and $v_i\in C^{k,\alpha}(\overline{B_{1/2}^+})$ for every $i=1,\ldots,d-1$, we get \bea\label{eq:derivativei}
\varphi_i\in C^{k-1,\alpha}(\overline{B_{1/2}^+})\quad\text{for every}\quad i=1,\ldots,d-1,
\eea 
and, by \eqref{eq:derivativeiv}, \be\label{derivativevarphii}\|\varphi_i\|_{C^{k-1,\alpha}(\overline{ B_{1/2}^+})}\le C.\ee Then, let us prove that also the normal derivative $\varphi_d$ is $C^{k-1,\alpha}$-regular. First, we notice that the interior condition in \eqref{eq:problem-quasi} can be rewritten as $$\varphi_d+\frac{s}{x_d}\varphi=-\sum_{i=1}^{d-1}\partial_i(DF(\nabla v)\cdot e_i)=:f\qquad\mbox{in }B_1^+.$$ 
Notice, that $f\in C^{k-1,\alpha}(\overline{ B_{1/2}^+})$ since both $DF$ and $v_i$ are $C^{k,\alpha}$-regular, for $i=1,\ldots,d-1$. Now, fixed $(x',0)\in B_{1/2}'$ the above identity for $\varphi$ implies that
$$
x_d^{-s}\,\partial_d(x_d^s\, \varphi(x',x_d))= f(x',x_d)\quad\mbox{for }x_d \in (1/2-|x'|^2)^{1/2}. 
$$
Moreover, by solving the ODE with boundary condition $\lim_{x_d\to 0^+}x_d^s \, \varphi(x',x_d)=0$, we get 
$$\varphi(x',x_d)=\frac{1}{x_d^s}\int_0^{x_d} t^s f(x',t)\,dt\quad\mbox{in }B_{1/2}^+.
$$
By \cref{lemma-ttv-utile}, we have that $$\varphi_d\in C^{k-1,\alpha}(\overline{ B_{1/2}^+})$$ and 
\be\label{derivativevarphid}
\|\varphi_d\|_{C^{k-1,\alpha}(\overline{ B_{1/2}^+})}\le C\|f\|_{C^{k-1,\alpha}(\overline{ B_{1/2}^+})}\le C
\ee
where in the last inequality we exploit the improved regularity along tangential derivatives in \eqref{eq:derivativeiv}. Since we obtained that $\varphi$ is $C^{k,\alpha}$ in $\overline{B_{1/2}^+}$, it remains to transfer this regularity to $v_d$. Although $\varphi$ is closely related to $v_d$, a separate argument is required to conclude the same regularity for $v_d$.

By the uniform convexity of $F$, we know that $A(p)e_d\cdot e_d\ge\lambda>0$ for every $p\in E$. 
Then, since $DF$ is $C^{k,\alpha}$-regular, being that $E$ is a connected bounded open set, by the global implicit function theorem there exists $H\in C^{k,\alpha}(E)$ such that 
$$
t=D F(p',p_d)\cdot e_d\iff p_d=H(p',t),\,\quad \mbox{for every } p \in E,\ t\in DF(E).
$$ 
Notice that the $C^{k,\alpha}$ norm of $H$ depends only on the $C^{k+1,\alpha}$ norm of $F$. Now, being $\varphi=DF(\nabla v)\cdot e_d \in C^{k,\alpha}(\overline{B_{1/2}^+})$ and $\nabla_{x'}v \in C^{k,\alpha}(\overline{B_{1/2}^+};\R^{d-1})$, we infer that $$v_d=H(\nabla_{x'}v,\varphi)\in C^{k,\alpha}(\overline{B_{1/2}^+}).$$ 
Moreover, by \eqref{eq:derivativeiv}, \eqref{derivativevarphii} and \eqref{derivativevarphid}, the following estimate holds
$$\|v_d\|_{C^{k,\alpha}(\overline{B_{1/2}^+})}\le C\sum_{i=1}^{d-1}\|v_i\|_{C^{k,\alpha}(\overline{B_{1/2}^+})}+\|\varphi\|_{C^{k,\alpha}(\overline{B_{1/2}^+})}\le C,$$
concluding the proof.
\end{proof}
\begin{proof}[Proof of \cref{t:shauder-quasi}]
    It follows immediately from \cref{prop:quasi}.
\end{proof}
\subsection{Proof of \texorpdfstring{$C^\infty$}{Cinfty} regularity} Now we prove the main results of this section.
\begin{proof}[Proof of \cref{prop:k-implies-k+1}]
Let $\gamma \in (-2,2)$ and $w$ be a regular solution, in the sense of \cref{def:definition2.1}. Up to translation and rescaling, assume that $0\in\partial\Omega_u$ and $\nabla w(0)=e_d$. Therefore, as in \Cref{s:subs.hodograph}, we can infer the existence of $\rho>0$ and $\delta>0$ such that 
$$
\Phi:\overline\Omega_w\cap B_\rho\to \R^d\cap\{x_d\ge0\},\quad \Phi(x',x_d):=(x', w(x',x_d)),\quad 
$$ 
is bijective from $\overline\Omega_w\cap B_\rho$ into $B_\delta^+$. Hence, there exists the hodograph transform $h\in C^{1,\alpha}(\overline{B_\delta^+})$ of $w$ and, by \cref{l:hsolves}, it is a solution of \eqref{eq:quasi-v}, with $\nabla h(0)=e_d$.\\
We proceed by showing that the regularity of $h$ can be improved. 
First, we observe that $$
A(p):=D^2 F(p)=\frac{2}{p_d}\mathrm{Id}-2\frac{e_d\otimes p+ p\otimes e_d}{p_d^2}+\frac{2(1+|p|^2)}{p_d^3}e_d\otimes e_d,
$$
is uniformly elliptic in $B_\eta(e_d)$, for some $\eta>0$. In fact, since $A(e_d)=2\mathrm{Id}$, for $\eta>0$ sufficiently small, we have that $$
-\mathrm{Id}\leq A(p)-2\mathrm{Id}\leq  \mathrm{Id},\quad\mbox{for every }p \in B_\eta(e_d).
$$
Moreover, since $\nabla h(0)=e_d$ and $A(\nabla h(0))=2\mathrm{Id}$, by the $C^{1,\alpha}$-regularity of $h$ in $\overline{B_\delta^+}$ we deduce the existence of a radius $\sigma>0$, possibly smaller then $\delta$, such that $$(\nabla h)(\overline{B_\sigma^+})\subset B_\eta(e_d).$$ Therefore, $F$ is uniformly convex in $E:=B_\eta(e_d)$ and, up to rescaling, the hodograph map $h$ fulfills the hypotheses of \cref{t:shauder-quasi}. Since $F$ is $C^{\infty}$ regular in $E$, then $h$ is $C^{\infty}$ in a neighborhood of the origin. Finally, by exploiting the inverse of the hodograph transformation we deduce the existence of $r>0$ such that $w\in C^{\infty}(\overline\Omega_w\cap B_{r})$. Similarly, since the derivatives of the graph of $\partial \O_w$ are given by the tangential derivatives of $h$ (see \eqref{e:deriv.graph}), the improved regularity of the hodograph transform implies that $\partial \Omega_w$ is smooth in $B_{r}$.
\end{proof}
The following result is a direct consequence of \cref{prop:k-implies-k+1}.
\begin{proof}[Proof of \cref{t:main-AP}]
The proof directly follows by \cref{prop:k-implies-k+1}. Indeed, 
let $u$ be a local minimizer of $\mathcal{J}_\gamma$ in $B_1$ and let $x_0\in\partial\Omega_u$ be a regular free boundary point. Without loss of generality, suppose that $x_0=0$ and consider the auxiliary function $w:=\beta u^{1/\beta}$. Then $w$ is a local minimizer to $\mathcal{E}_s$, and, up to rescaling, by \cref{l:fbcondition} it is a regular solution.
Thus, by \cref{prop:k-implies-k+1}, there exists $r>0$ small such that 
$$
\text{$w\in C^{\infty}(\overline\Omega_w\cap B_r)$ and the free boundary $\partial\Omega_w$ is smooth in $B_r$.} 
$$
Finally, the last part of the result follows by exploiting Hadamard's Lemma. Indeed, up to localize the problem in a smaller ball $B_r$ in such a way the boundary normal coordinates are well-defined,
we can deduce that $w(x)=\mathrm{dist}(x,\partial \O_w)\psi(x)$ where $\psi \in C^\infty(\overline{\O}_w\cap B_r)$. On the other hand, being $|\nabla w|=1$ on $\partial \O_w$, we get $\psi\equiv 1$ on $\partial \O_w\cap B_r$. Therefore, $\psi$ is positive in $\partial \O_w \cap B_r$ 
and thus
$$
\frac{u}{\mathrm{dist}(x,\partial \O_w)^\beta} = \left(\frac{1}{\beta}\frac{w}{\mathrm{dist}(x,\partial \O_u)}\right)^\beta=\left(\frac{1}{\beta}\psi(x)\right)^\beta
$$
is smooth in $\overline{\O}_w\cap B_r$.
\end{proof}

\section{The stability condition}\label{s:variations}
In this section we introduce the definitions of stable solutions for the Alt-Phillips problem. Then, we compute both the first and the second variations with respect to inner variations. Ultimately, we derive the rigidity condition of \cref{t:sing.min.cones} for minimizing (and stable) cones.

\subsection{Minimizers and stable solutions}\label{s:min-stable}
In order to compute the stability condition, we can consider 
solutions of the problem whose free boundary is smooth
around the points we want to deal with. Nevertheless, in light of the result of \cref{s:smooth}, it is not restrictive to work with regular solutions $w$ in the sense of \cref{def:definition2.1}, since their free boundary is smooth near the points of interest and the free boundary conditions is pointwise satisfied. 

Let $\gamma \in (-2,0)$, and $B$ be a ball in $\R^d$, then we define 
$$
\mathcal{J}_\gamma(u,B) := \int_{B}|\nabla u|^2 + u^\gamma \ind_{\{u>0\}}\,dx,\qquad \mathcal{E}_s(w,B):=\int_{B} w^s (|\nabla w|^2 +1) \ind_{\{w>0\}}\,dx.
$$
In the following we introduce the notion of stable solutions. 
In general, stability is defined for critical points of a variational functional with respect to a class of variations. However, for the Alt-Phillips problem with negative exponents, criticality must be replaced by a stronger assumption, as the validity of \eqref{e:hodo-pre-transform} cannot be directly deduced from domain variations (see \cref{l:useful} and the discussion in \cite{DeSilvaSavinNegativePower}). Throughout the paper, we address this by assuming that solutions are regular in the sense of \cref{def:definition2.1} near regular points.

\begin{definition}[Stable solutions]\label{def:definition3.1}
     Let $\gamma \in (-2,0)$, and $B$ be a ball in $\R^d$. We say that $u: B \to \R$ is a stable solution for $\mathcal{J}_\gamma$ in $B$, 
if 
\bea\label{e:stable-def-u}
\delta^2 \mathcal{J}_\gamma(u,B)[\xi]:=\frac{d^2}{d t^2}\bigg\lvert_{t=0}\mathcal{J}_\gamma(u \circ \Phi_t^{-1},B)\geq 0,\quad \mbox{where }\Phi_t(x):=x+t\xi(x),
\eea
for every $\xi \in C^{\infty}_c(B;\R^d)$. Equivalently, $w: B\to \R$ is stable solution for $\mathcal{E}_s$ in $B$,
if 
\bea\label{e:stable-def-w}
   \delta^2 \mathcal{E}_s(w,B)[\xi]:=\frac{d^2}{d t^2}\bigg\lvert_{t=0}\mathcal{E}_s(w \circ \Phi_t^{-1},B)\geq 0,\quad \mbox{where }\Phi_t(x):=x+t\xi(x),
   \eea
   for every $\xi \in C^{\infty}_c(B;\R^d)$.
\end{definition}
We also introduce the notion of local/global minimizers and global stable solutions.
\begin{definition}\label{def:definition3.2}
    Let $B$ be a ball in $\R^d$, we say that $u: B\to \R$ is a local minimizer of $\mathcal{J}_\gamma$ in $B$, if
    $$
    \mathcal{J}_\gamma(u,B)\leq \mathcal{J}_\gamma(v, B),\qquad \text{for all $v\in H^1(B)$ such that $v-u \in H^1_0(B)$}. 
    $$
    Moreover, $u: \R^d\to \R$ is said to be a global minimizer of $\mathcal{J}_\gamma$ in $\R^d$ (resp.~global stable solution in $\R^d$) if $u$ is a local minimizer (resp.~stable solution) in every ball $B\subset \R^d$. The previous definitions can be naturally extended to the function $w$.
\end{definition}

\subsection{Second inner variation} The following result, is our first characterization of the stability condition, which ultimately implies \cref{t:sing.min.cones}.

\begin{proposition}\label{prop:stability-cond}
Let $\gamma \in (-2,0)$, and $B$ be a ball in $\R^d$. Let $f \in C^\infty_c(B)$ and $u:B\to\R$ be a local minimizer of $\mathcal{J}_\gamma$ in $B$; or a stable solution in $B$, in the sense of \cref{def:definition3.1}. Assume that $u$ is regular in $\text{supp}(f)$, in the sense of \cref{def:definition2.1}.
Then 
$$
\int_{ \O_u} |\nabla u|^2 \left(|\nabla f|^2 - \mathcal{A}^2_u\, f^2\right)\,dx\geq 0,\quad\text{where }\mathcal{A}^2_u:= \frac{|\nabla^2 u|^2}{|\nabla u|^2} - \frac{|\nabla^2 u\nabla u|^2}{|\nabla u|^4}.
$$
Equivalently, let $w:B\to\R$ be a local minimizer of $\mathcal{E}_s$ in $B$; or a stable solution in $B$ in the sense of \cref{def:definition3.1}. Assume that $u$ is regular in $\text{supp}(f)$, in the sense of \cref{def:definition2.1}. Then 
$$
\int_{ \O_w} w^s|\nabla w|^2 \left(|\nabla f|^2 - \mathcal{A}^2_w\, f^2\right)\,dx\geq 0,\quad\text{where }\mathcal{A}^2_w:= \frac{|\nabla^2 w|^2}{|\nabla w|^2} - \frac{|\nabla^2 w\nabla w|^2}{|\nabla w|^4}.
$$
\end{proposition}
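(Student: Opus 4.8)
The plan is to work with the $\mathcal{E}_s$-formulation, which is the one adapted to the degenerate structure near regular free boundary points, and then transfer the conclusion to $u$ via the change of variables $w=\beta u^{1/\beta}$ (the equivalence of the two integral inequalities is a pointwise algebraic identity, using $\nabla w = u^{1/\beta-1}\nabla u$ and the corresponding relation between $\mathcal{A}_w^2$ and $\mathcal{A}_u^2$, combined with $w^s|\nabla w|^2\,dx = c\,u^\gamma\cdot u^{2/\beta-2}|\nabla u|^2\,dx$ — so it suffices to prove one of them). Fix $f\in C^\infty_c(B)$ supported where $w$ is a regular solution in the sense of \cref{def:definition2.1}, so that $\partial\Omega_w$ is smooth there and $|\nabla w|=1$ on the free boundary in the pointwise sense. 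Since $w$ is either a local minimizer or a stable solution, the second inner variation $\delta^2\mathcal{E}_s(w,B)[\xi]\ge 0$ for every $\xi\in C^\infty_c(B;\R^d)$; I will evaluate this for the specific choice
\bea
\xi := \frac{\nabla w}{|\nabla w|}\,f ,
\eea
which is smooth and compactly supported in the region where $w$ is regular (note $|\nabla w|$ is bounded away from zero near $\partial\Omega_w$ by the free boundary condition, and in the interior $w$ is smooth and positive).

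The core of the proof is the computation of $\frac{d^2}{dt^2}\big|_{t=0}\mathcal{E}_s(w\circ\Phi_t^{-1},B)$ for $\Phi_t=\mathrm{Id}+t\xi$. First I would change variables $y=\Phi_t(x)$ to write $\mathcal{E}_s(w\circ\Phi_t^{-1},B)=\int_{B}\,(w(x))^s\big((\nabla\Phi_t(x))^{-T}\nabla w(x)\big)^2\,\ind_{\{w>0\}}+ (w(x))^s\,\big)\,J\Phi_t(x)\,dx$, expand $\nabla\Phi_t = \mathrm{Id}+t\nabla\xi$ and $J\Phi_t = 1 + t\,\mathrm{div}\,\xi + t^2(\ldots)$ to second order in $t$, and collect the $t^2$-coefficient. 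A key point is that on $\Omega_w$ the solution $w$ satisfies the Euler–Lagrange equation \eqref{e:EL-w}, which allows integrations by parts to eliminate the first-order (in the second-variation expansion) terms that cannot be written in divergence form, using that $w$ is a critical point — here the regularity from \cref{s:smooth} is essential, since it legitimizes the second-order Taylor expansions and the boundary integrations by parts on the smooth $\partial\Omega_w\cap\mathrm{supp}(f)$, where the free boundary condition $|\nabla w|^2 = 1$ and the induced Robin/Neumann-type relation kill the boundary contributions. After this reduction, with the special choice $\xi=\frac{\nabla w}{|\nabla w|}f$ the surviving quadratic form should organize itself, via the Sternberg–Zumbrun identity \eqref{szformula} and the geometric decomposition of $\nabla\xi$ into its normal/tangential pieces relative to the level sets of $w$, precisely into
\bea
\delta^2\mathcal{E}_s(w,B)[\xi] = 2\int_{\Omega_w} w^s|\nabla w|^2\big(|\nabla f|^2 - \mathcal{A}_w^2 f^2\big)\,dx,
\eea
up to a positive multiplicative constant; the claimed inequality then follows from $\delta^2\mathcal{E}_s\ge 0$.

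The main obstacle I anticipate is the bookkeeping in the second-variation computation: tracking which of the many $t^2$-terms combine into the clean expression $|\nabla f|^2 - \mathcal{A}_w^2 f^2$ requires careful use of the equation $\Delta w = \frac{s}{2}\frac{1-|\nabla w|^2}{w}$ and of the identity $\nabla\big(\frac{\nabla w}{|\nabla w|}\big)$ expressed through the second fundamental form of level sets. In particular one must verify that the weight $w^s$, differentiated in the change of variables, produces exactly the terms that — together with the free boundary condition — make the boundary contributions vanish; this is the place where the negative-exponent regime differs from the Alt–Caffarelli and positive-exponent cases (as the authors note in the discussion around \eqref{e:variations}), and it is why the vector field is taken proportional to $\nabla w/|\nabla w|$ rather than $\nabla w/|\nabla w|^2$. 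A secondary technical point is justifying that $\xi$, though only as regular as $w$ allows, is admissible: since $\mathrm{supp}(f)$ lies in the regular region, $w$ is $C^\infty$ up to $\partial\Omega_w$ there and $|\nabla w|\ge c>0$ on a neighborhood of the free boundary, so $\xi\in C^\infty_c$ and the formula for $\delta^2$ applies directly.
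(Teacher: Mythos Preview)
Your proposal is correct and follows essentially the same route as the paper: choose $\xi=\frac{\nabla w}{|\nabla w|}f$, expand the second inner variation of $\mathcal{E}_s$ to second order in $t$, and then use the interior equation \eqref{e:EL-w} together with the free boundary condition on the smooth part of $\partial\Omega_w\cap\mathrm{supp}(f)$ to reduce the resulting expression to $\int_{\Omega_w} w^s|\nabla w|^2(|\nabla f|^2-\mathcal{A}_w^2 f^2)\,dx$. In the paper this ``bookkeeping'' is organized into two lemmas: \cref{lemma:computation-2-var} writes $\delta^2\mathcal{E}_s$ as $\int(f_1+f_2+f_3)$ with $f_2=w^s|\nabla w|^2|\nabla f|^2$, and \cref{lemma:computation-2-var2} shows $\int(f_1+f_3)=-\int w^s|\nabla w|^2\mathcal{A}_w^2 f^2$ by repeated use of the PDE and a Gauss--Green argument showing the boundary flux vanishes (the final constant is $1$, not $2$, but this is immaterial for the inequality).
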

In order to compute a second order expansion close to regular free boundaries, we work directly with local minimizers $w$ of $\mathcal{E}_s$ (resp.~stable solutions) being smooth close to regular points. Through the section, given a vector field $\xi:\R^d \to \R^d$ we denote with $D\xi \in \R^{d\times d}$ the matrix with entries $(D\xi)_{ij}:=\partial_i \xi_j$, for every $1\leq i,j\leq d$.\\\vspace{-0.35cm}

In the following result we compute the first variation of the functional $\mathcal{E}_s$ along inner variations. Ultimately, we observe that regular solutions are indeed stationary along non-tangential variations. We stress that such computation is not necessary for the proof of \cref{prop:stability-cond}.
\begin{lemma}\label{l:useful}
Let $\gamma \in (-2,0)$, and $B$ be a ball in $\R^d$. Let $w \in H^1(B)$, then for every $\xi \in C^\infty_c(B;\R^d)$ we set as first variation of $\mathcal{E}_s$ at $w$ with respect to $\xi$ the quantity
$$
\delta \mathcal{E}_s(w,B)[\xi]:=\frac{d}{d t}\bigg\lvert_{t=0}\mathcal{E}_s(w \circ \Phi_t^{-1},B),\quad \mbox{where }\Phi_t(x):=x+t\xi(x).
$$
Then
\be\label{eq:tesi1}
\delta \mathcal{E}_s(w,B)[\xi]=\int_{\O_w}w^s\Big((|\nabla w|^2+1)\dive \xi-2D\xi\nabla w\cdot \nabla w\Big)\,dx.
\ee
Moreover, for every $f\in C^\infty_c(B)$, if $w$ is a regular solution in $\text{supp}(f)$, in the sense of \cref{def:definition2.1}, we have
\be\label{eq:tesi2}
\delta \mathcal{E}_s(w,B)\left[ \frac{\nabla w}{|\nabla w|} f\right] = 0.
\ee
\end{lemma}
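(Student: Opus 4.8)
The strategy is a direct computation of the two displayed identities, exploiting the smoothness of $w$ near regular points guaranteed by \cref{prop:k-implies-k+1}. For \eqref{eq:tesi1}, I would start from the definition and perform the change of variables $y=\Phi_t(x)=x+t\xi(x)$, which is a diffeomorphism for $t$ small since $\xi$ is compactly supported and smooth. Writing $v_t:=w\circ\Phi_t^{-1}$, one has $\nabla v_t(\Phi_t(x)) = (D\Phi_t(x))^{-T}\nabla w(x)$ and $\mathrm{Id}_{\{v_t>0\}}(\Phi_t(x))=\mathrm{Id}_{\{w>0\}}(x)$, while $dy=\det D\Phi_t(x)\,dx$. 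Therefore
\[
\mathcal{E}_s(v_t,B)=\int_{\O_w} \big(w\circ\Phi_t^{-1}\circ\Phi_t\big)^s\Big(\big|(D\Phi_t)^{-T}\nabla w\big|^2+1\Big)\det D\Phi_t\,dx,
\]
and since $w\circ\Phi_t^{-1}\circ\Phi_t = w$, the weight $w^s$ is unchanged. Differentiating in $t$ at $t=0$ and using $D\Phi_0=\mathrm{Id}$, $\tfrac{d}{dt}\big|_{t=0}D\Phi_t = D\xi$, the standard identities $\tfrac{d}{dt}\big|_{t=0}\det D\Phi_t=\dive\xi$ and $\tfrac{d}{dt}\big|_{t=0}(D\Phi_t)^{-T}=-(D\xi)^T$ (so $\tfrac{d}{dt}\big|_{t=0}|(D\Phi_t)^{-T}\nabla w|^2 = -2D\xi\nabla w\cdot\nabla w$) yields \eqref{eq:tesi1} directly. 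Differentiation under the integral sign is justified because, on $\mathrm{supp}(\xi)$, $w$ is smooth (hence $\nabla w$ bounded) away from $\partial\O_w$, and near $\partial\O_w$ the weight $w^s$ with $s\in(-1,0)$ together with $|\nabla w|\le C$ gives an integrable, locally uniformly-in-$t$ bound on the integrand and its $t$-derivative.

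\textbf{Proof of \eqref{eq:tesi2}.} Now specialize to $\xi=\dfrac{\nabla w}{|\nabla w|}\,f$ with $f\in C^\infty_c(B)$ and $w$ a regular solution on $\mathrm{supp}(f)$. On $\O_w$ the vector field $\xi$ is smooth, $|\nabla w|>0$ being bounded away from zero away from $\partial\O_w$; near $\partial\O_w$ the free boundary condition $|\nabla w|=1$ (from \cref{def:definition2.1}, the limit \eqref{e:condizione-puntuale}) makes $\xi$ extend continuously up to the free boundary. Insert this $\xi$ into \eqref{eq:tesi1}. The plan is to recognize the integrand as a divergence plus a term that vanishes by the interior Euler–Lagrange equation. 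Concretely, I would rewrite
\[
w^s\big((|\nabla w|^2+1)\dive\xi - 2D\xi\nabla w\cdot\nabla w\big)
\]
and integrate by parts, moving derivatives off $\xi$ onto $w^s$ and $\nabla w$. The first-order (bulk) terms reorganize, using $\Delta w = \tfrac{s}{2}\tfrac{1-|\nabla w|^2}{w}$, into a total divergence; the boundary term over $\partial\O_w$ carries the factor $w^s(|\nabla w|^2-1)$ which is zero by the free boundary condition \eqref{e:condizione-puntuale} (here $s<0$ is used together with the rate of convergence to close the boundary integral, exactly as in the estimate in the proof of \cref{l:fbcondition}). The outer boundary $\partial B$ contributes nothing since $f$ has compact support. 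The cleanest route is probably to note that $\delta\mathcal{E}_s(w,B)[\xi]$ equals $\tfrac{d}{dt}\big|_{t=0}\mathcal{E}_s(w,\Phi_t(B))$ computed the other way — i.e. to test the weak form of \eqref{eq:quasi-v}/\eqref{e:EL-w} directly against the flow generated by $\xi$, which is legitimate because $\xi$ is non-tangential (indeed parallel to $\nabla w$, hence to the normal on $\partial\O_w$) and $w$ is regular; then the first variation is exactly the pairing of the Euler–Lagrange operator with the deformation, which vanishes.

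\textbf{Main obstacle.} The only genuinely delicate point is the behavior at the free boundary when $s\in(-1,0)$: the weight $w^s$ blows up there, so one must check that all integrations by parts are legitimate and that the boundary integrals over $\partial\O_w$ actually vanish rather than merely being formal. This is where the pointwise free boundary condition \eqref{e:condizione-puntuale} and the quantitative $C^{1,\alpha}$ decay with $\alpha>-s$ (from \cref{def:definition2.1} and \cref{l:fbcondition}) are essential: they guarantee $|w^s(|\nabla w|^2-1)|\lesssim t^{s+\alpha}\to 0$ along normals, so the surface term on an $\eps$-neighborhood of $\partial\O_w$ vanishes in the limit $\eps\to 0$. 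I would carry out the integration by parts on $\O_w\cap\{w>\eps\}$, identify the $\{w=\eps\}$ boundary term, estimate it by the above decay, and pass to the limit. Everything else is the routine first-variation-of-a-weighted-functional computation; the homogeneity and global minimality hypotheses of \cref{t:sing.min.cones} play no role here and enter only in the later deduction of \eqref{e:stable1}.
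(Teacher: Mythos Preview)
Your proposal is correct and follows essentially the same approach as the paper: change of variables plus first-order Taylor expansion for \eqref{eq:tesi1}, then for \eqref{eq:tesi2} rewrite the integrand as a divergence using the interior PDE and kill the boundary contribution via the free boundary condition $w^s(|\nabla w|^2-1)\to 0$ (with the $C^{1,\alpha}$, $\alpha>-s$, decay). The only cosmetic differences are that the paper organizes the integration by parts through an explicit Rellich-type identity (so the cancellation $2w^s|\nabla w|^2\xi-2w^s(\xi\cdot\nabla w)\nabla w=0$ for $\xi=\tfrac{\nabla w}{|\nabla w|}f$ is transparent) and invokes a Gauss--Green theorem rather than your $\{w>\eps\}$ limiting argument, and it uses only the $C^{1,\alpha}$ regularity from \cref{def:definition2.1} rather than the full smoothness of \cref{prop:k-implies-k+1}.
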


\begin{proof}
Let $\xi \in C^\infty_c(B;\R^d)$ be a smooth vector field with compact support in $B$ and $\Phi_t:=\mathrm{Id}+t\xi$.
Let $w_t:B\to \R$ be such that
$w_t(x):=w(\Phi^{-1}_t(x))$.
Then, by the change of variable $y=\Phi_t(x)$, we get \be\label{eq:richiama1}\begin{aligned}
\mathcal{E}_s(w \circ \Phi_t^{-1},B) &=\int_{\Omega_{w_t}}w_t^s(|\nabla w_t|^2+1)\,dx\\
&=\int_{\Omega_w} w^s(\Phi_t^{-1}(x))\Big(\big|D\Phi_t^{-1}(x)\nabla w_t(\Phi_t^{-1}(x))\big|^2+1\Big)\,dx\\&=\int_{\Omega_w}w^s(y)\Big(D\Phi_t^{-1}(\Phi_t(y))^T D\Phi_t^{-1}(\Phi_t(y))\nabla w(y)\cdot \nabla w(y)+1\Big)|\text{det}D\Phi_t(y)|\,dy\\&=\int_{\Omega_w}w^s(y)\Big([D\Phi_t(y)]^{-T}[D\Phi_t(y)]^{-1}\nabla w(y)\cdot \nabla w(y)+1\Big)|\text{det}D\Phi_t(y)|\,dy.
\end{aligned}\ee
Using the facts that 
$$
D\Phi_t^{-1}=\mathrm{Id}-tD\xi + o(t)\qquad\text{and}\qquad|\text{det}D\Phi_t|=1+t\dive \xi+ o(t),
$$ 
we can expand the previous identity to first order in $t$, obtaining \eqref{eq:tesi1},
as we claimed.\\\vspace{-0.35cm}

Now, suppose that $w$ is a regular solution in $\text{supp}(f)$, in the sense of \cref{def:definition2.1}. By exploiting the $C^{1,\alpha}$-regularity of both $w$ and $\partial \O_w$ (for some $\alpha>-s$) we can rewrite \eqref{eq:tesi1} by integrating by parts. By the Rellich-type identity
 $$\dive\Big(|\nabla w|^2\xi-2(\xi\cdot\nabla w)\nabla w\Big)=|\nabla w|^2\dive \xi-2D\xi\nabla w\cdot \nabla w-2\Delta w(\xi\cdot \nabla w),$$ 
 we get
\bea 
\delta \mathcal{E}_s(w,B)[\xi]&=
\int_{\Omega_w}\left[w^s\dive\Big((|\nabla w|^2+1)\xi-2(\xi\cdot\nabla w)\nabla w\Big)+2w^s\Delta w(\xi\cdot \nabla w)\right]\,dx\\
&=
\int_{\O_w} \mathrm{div}\Big( w^s\,(|\nabla w|^2+1)\xi-2w^s(\xi\cdot\nabla w)\nabla w\Big)\, dx\\
&\quad+2\int_{\Omega_w} w^s\Delta w(\xi\cdot \nabla w) - \frac{s}{2}w^{s-1}\nabla w\cdot\Big((|\nabla w|^2+1)\xi-2(\xi\cdot\nabla w)\nabla w\Big)\,dx \\
&=
\int_{\O_w} \mathrm{div}\Big( w^s\,(1-|\nabla w|^2)\xi+2w^s|\nabla w|^2\xi-2w^s(\xi\cdot\nabla w)\nabla w\Big)\, dx\\
&\quad+2\int_{\Omega_w} w^s\left(\Delta w - \frac{s}{2}\frac{1-|\nabla w|^2}{w} \right)(\xi \cdot \nabla w) \,dx.
\eea
By substituting \eqref{e:hodo-pre-transform}, we already know that the last integral is null for every smooth vector field with compact support. On the other hand, for non-tangential variations, the previous computation can be further improved. Thus, given $f\in C^\infty_c(B)$, we  consider 
$$
\xi := \frac{\nabla w}{|\nabla w|} f.
$$
Indeed, since in $\partial \O_w \cap B$ the interior normal coincides with the gradient of $w$, the previous variation is perpendicular to $\partial \O_w$ as well. Then, by direct substitution, we have
\begin{align*}
\delta \mathcal{E}_s(w,B)\left[ \frac{\nabla w}{|\nabla w|}f\right] &= \int_{\O_w} \mathrm{div}\bigg( w^s\,(1-|\nabla w|^2)\frac{\nabla w}{|\nabla w|}f\bigg)\, dx =\int_{\partial\O_w} w^s\,(|\nabla w|^2-1)f \, d\mathcal{H}^{d-1}
\end{align*}
where in the last equality we exploit the Gauss-Green formula as in \cite[Proposition 2.7]{gaussgreen}. Precisely, since $w \in C^{1,\alpha}(\overline{\O}_w\cap \text{supp}(f))$ for some $\alpha>-s$, it implies that $w^s(|\nabla w|^2-1)$ extends continuously to zero on the free boundary $\partial \O_w$. Finally, \eqref{eq:tesi2} follows by the free boundary condition in \eqref{e:hodo-pre-transform}. 
\end{proof}

We proceed now to the study of the second variation.
\begin{lemma}\label{lemma:computation-2-var}
Let $\gamma \in (-2,0)$, and $B$ be a ball in $\R^d$. Let $f\in C^\infty_c(B)$ and $w:B\to\R$ be a regular solution in $\text{supp}(f)$, in the sense of \cref{def:definition2.1}.
Then
$$\delta^2\mathcal{E}_s(w,B)\left[\frac{\nabla w}{|\nabla w|}f\right]=\int_{\Omega_w}(f_1+f_2+f_3)\,dx,$$ where 
$$f_1:=\frac12w^s\Big(|\nabla w|^2+1\Big)\dive\left[\left(\frac{\Delta w\nabla w}{|\nabla w|^2}-\frac12\frac{\nabla(|\nabla w|^2)}{|\nabla w|^2}\right)f^2
\right],
$$
\bea f_2&:=w^s|\nabla f|^2|\nabla w|^2,
\eea
$$f_3:=w^s\Big((\Delta w)^2-|\nabla^2w|^2\Big)f^2+w^s\dive\left[\left(\frac12\nabla(|\nabla w|^2)-\Delta w\nabla w\right)f^2\right].$$
\end{lemma}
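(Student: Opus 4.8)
The plan is to compute $\delta^2\mathcal{E}_s(w,B)\left[\frac{\nabla w}{|\nabla w|}f\right]$ by differentiating twice in $t$ the expression \eqref{eq:richiama1} for $\mathcal{E}_s(w\circ\Phi_t^{-1},B)$, where $\Phi_t=\mathrm{Id}+t\xi$ with $\xi=\frac{\nabla w}{|\nabla w|}f$. Concretely, starting from
\[
\mathcal{E}_s(w\circ\Phi_t^{-1},B)=\int_{\Omega_w}w^s(y)\Big([D\Phi_t]^{-T}[D\Phi_t]^{-1}\nabla w\cdot\nabla w+1\Big)\,|\det D\Phi_t|\,dy,
\]
I would expand each factor to \emph{second} order in $t$: using $[D\Phi_t]^{-1}=\mathrm{Id}-tD\xi+t^2(D\xi)^2+o(t^2)$ one gets $[D\Phi_t]^{-T}[D\Phi_t]^{-1}=\mathrm{Id}-t(D\xi+D\xi^T)+t^2\big(2D\xi^TD\xi+(D\xi^2)+(D\xi^T)^2\big)+o(t^2)$ — wait, more carefully $[D\Phi_t]^{-T}[D\Phi_t]^{-1}=\mathrm{Id}-t(D\xi+D\xi^T)+t^2\big((D\xi)^T(D\xi)+\text{sym}((D\xi)^2)\big)+o(t^2)$ after collecting terms; and $|\det D\Phi_t|=1+t\,\mathrm{div}\,\xi+t^2\big(\tfrac12(\mathrm{div}\,\xi)^2-\tfrac12\mathrm{tr}((D\xi)^2)\big)+o(t^2)$. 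Multiplying these expansions and extracting the coefficient of $t^2$ (times $2$, because of the $\frac{d^2}{dt^2}$), one obtains an integral over $\Omega_w$ of $w^s$ times a purely algebraic expression in $\nabla w$, $D\xi$, $\mathrm{div}\,\xi$ — no derivatives of $w^s$ appear since the weight is frozen at $y$ under the change of variables, which is precisely the advantage of the inner-variation formulation here.

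Next I would substitute $\xi=\frac{\nabla w}{|\nabla w|}f$ and hence $D\xi=\frac{f}{|\nabla w|}\nabla^2 w+\frac{\nabla w}{|\nabla w|}\otimes\nabla f+f\,\nabla w\otimes\nabla\!\big(\tfrac{1}{|\nabla w|}\big)$, and carefully expand all the quadratic-in-$D\xi$ and $(\mathrm{div}\,\xi)^2$ terms. The term $f_2=w^s|\nabla f|^2|\nabla w|^2$ should fall out cleanly from the $\nabla w\otimes\nabla f$ contributions (using $\big|\frac{\nabla w}{|\nabla w|}\big|=1$). The remaining terms, which carry either a single $f\nabla f$ (mixed) or an $f^2$, must be reorganized: the strategy is to write as many of them as possible as divergences of vector fields proportional to $f^2$ (which will become $f_1$ and the divergence part of $f_3$), leaving a genuinely "bulk" remainder. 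Here one uses the interior PDE $\Delta w=\frac{s}{2}\frac{1-|\nabla w|^2}{w}$ only insofar as needed, but I expect the statement is designed so that $f_1$ absorbs the weight factor $\tfrac12 w^s(|\nabla w|^2+1)$ times a divergence (this is exactly the combination $w^s(|\nabla w|^2+1)$ coming from the $+1$ and the $|\nabla w|^2$ in the integrand), while $f_3$ collects the $(\Delta w)^2-|\nabla^2 w|^2$ pointwise term — which is the classical Bochner-type remainder $\mathrm{div}(\Delta w\,\nabla w)-|\nabla^2 w|^2-\nabla\Delta w\cdot\nabla w$ massaged appropriately — plus its own compensating divergence.

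The main obstacle is bookkeeping: reliably identifying which second-order terms combine into exact divergences of $f^2$-vector fields versus which survive pointwise, and in particular producing the precise split into $f_1,f_2,f_3$ rather than some other equivalent grouping. A useful intermediate check is to integrate by parts the mixed $f\nabla f$ terms to convert them into $f^2$ terms plus boundary/divergence terms (legitimate since $f\in C^\infty_c(B)$ and $w$ is smooth on $\mathrm{supp}(f)$ near regular points, with $w^s(|\nabla w|^2-1)$ extending continuously to $0$ on $\partial\Omega_w$ by $C^{1,\alpha}$-regularity with $\alpha>-s$, as in \cref{l:useful}). One identity I expect to use repeatedly is $D\xi\nabla w\cdot\nabla w=\frac12\nabla(|\nabla w|^2)\cdot\xi+\text{(lower order)}$ evaluated with the specific $\xi$, together with $\mathrm{div}\big(\tfrac{\nabla w}{|\nabla w|}f\big)=\tfrac{f}{|\nabla w|}\Delta w+\nabla f\cdot\tfrac{\nabla w}{|\nabla w|}-\tfrac{f}{|\nabla w|^3}\nabla^2w\nabla w\cdot\nabla w$. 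Once the three pieces are in hand, the proposition follows by simply reading off the integrand; the passage from this lemma to \cref{prop:stability-cond} (integrating $f_1$ and the divergence in $f_3$ by parts, and using the PDE to rewrite $f_1$, thereby exposing $\mathcal{A}_w^2 f^2$) is deferred to the next step and is not part of this lemma's proof.
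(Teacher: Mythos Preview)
Your strategy---expand the inner variation to second order in $t$, extract the general quadratic form in $D\xi$, then substitute $\xi=\frac{\nabla w}{|\nabla w|}f$ and reorganize---is exactly the paper's approach. Two shortcuts worth knowing: for $f_2$ the key point is the \emph{exact} cancellation $D\xi\,\nabla w=|\nabla w|\nabla f$ (the $\nabla^2 w$ and $\nabla(|\nabla w|^{-1})$ contributions to $D\xi\nabla w$ annihilate each other, so $f_2$ is not merely the $\nabla w\otimes\nabla f$ piece), and for $f_1,f_3$ the paper invokes the differential identities already computed in \cite[Theorem~4.1]{thomas} with $\varphi=f|\nabla w|$ rather than redoing the bookkeeping; note also that this lemma is a \emph{pointwise} identity of integrands, so neither the interior PDE nor any actual integration by parts is used here---both enter only in \cref{lemma:computation-2-var2}.
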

\begin{proof}
   Given $\xi \in C^\infty_c(B;\R^d)$ and $\Phi_t:=\mathrm{Id} + t \xi$, we consider $w_t:B\to \R$ be such that 
$w_t(x):=w(\Phi^{-1}_t(x))$. 
By \eqref{eq:richiama1} and the following second order expansions
\begin{align*}
D\Phi_t^{-1}&=\mathrm{Id}-tD\xi +t^2[D\xi]^2+ o(t^2),\\|\text{det}D\Phi_t|&=1+t\dive \xi+ \frac{t^2}{2}\Big((\dive \xi)^2-\text{Tr}(D\xi)^2\Big)+o(t^2),
\end{align*}
we have that 
\bea \delta^2 \mathcal{E}_s(w)[\xi]=\int_{\Omega_w} w^s\bigg(\Big(|\nabla w|^2+1\Big)&\frac{(\dive \xi)^2-\text{Tr}(D\xi)^2}{2}+|D\xi\nabla w|^2\\&\quad+2\Big([D\xi]^2\nabla w\cdot \nabla w-D\xi\nabla w\cdot \nabla w \dive \xi\Big)\bigg)\,dx.\eea
Now, set
$$f_1:= w^s\Big(|\nabla w|^2+1\Big)\frac{(\dive \xi)^2-\text{Tr}(D\xi)^2}{2},$$ $$f_2:=w^s|D\xi \nabla w|^2,
$$
$$f_3:=2w^s\Big([D\xi]^2\nabla w\cdot \nabla w-D\xi\nabla w\cdot \nabla w \dive \xi\Big).$$
Given $f\in C^{\infty}_c(B)$, we take $\xi:=\frac{\nabla w}{|\nabla w|}f$.
For what it concerns $f_1$ and $f_3$, we proceed by using some general differential identities, already exploited in \cite[Subsection 4.2]{thomas} for the study of the Alt-Phillips functional for positive exponents. In fact, by following their notations, the new formulations of $f_1$ and $f_3$ follow by substituting $\varphi=f|\nabla w|$ in to the proof of \cite[Theorem 4.1]{thomas}. Regarding $f_2$, we have that 
\bea 
\partial_{i}\xi_j&=\frac{\partial_{ij}w}{|\nabla w|}f+\frac{\partial_if\partial_jw}{|\nabla w|}-\frac{1}{|\nabla w|^2}\partial_j w \frac{(\nabla\partial_iw\cdot \nabla w)}{|\nabla w|}f.
\eea
Therefore
\bea D\xi\nabla w 
=\frac{\nabla ^2w\nabla w}{|\nabla w|}f+|\nabla w|\nabla f-\frac{1}{|\nabla w|^2}|\nabla w|^2\frac{\nabla^2w\nabla w}{|\nabla w|} f=|\nabla w|\nabla f,
\eea
which implies that $|D\xi\nabla w|^2=|\nabla w|^2|\nabla f|^2,$ concluding the proof.
\end{proof}

\begin{lemma}\label{lemma:computation-2-var2}
Let $\gamma \in (-2,0)$, and $B$ be a ball in $\R^d$. Let $f\in C^\infty_c(B)$ and $w:B\to\R$ be a local minimizer of $\mathcal{E}_s$ in $B$; or a stable solution in $B$, in the sense of \cref{def:definition3.1}. Assume that $w$ is regular in $\text{supp}(f)$, in the sense of \cref{def:definition2.1}. If $f_1$ and $f_3$ are as in \cref{lemma:computation-2-var}, then
$$\int_{\Omega_w}(f_1+f_3)\,dx=-\int_{\Omega_w}w^s|\nabla w|^2\mathcal{A}_w^2f^2\,dx,\quad\mbox{where } \mathcal{A}_w^2=\frac{|\nabla^2w|^2}{|\nabla w|^2}-\frac{|\nabla^2w\nabla w|^2}{|\nabla w|^4}.
$$ 
\end{lemma}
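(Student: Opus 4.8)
The plan is to add the two pointwise densities $f_1$ and $f_3$ of \cref{lemma:computation-2-var}, reorganize their sum as a single divergence multiplied by a factor vanishing on $\partial\O_w$, plus terms of lower order, integrate that divergence by parts exactly once, and then let everything collapse by repeatedly invoking the interior equation in \eqref{e:hodo-pre-transform} and a purely pointwise computation. Throughout I would use that, by \cref{prop:k-implies-k+1}, the regular solution $w$ is smooth up to $\partial\O_w$ on $\text{supp}(f)$, so that $\nabla^2 w$, $\Delta w$, $|\nabla w|$ stay bounded there, $|\nabla w|=1$ on $\partial\O_w$, and every integrand below is $O(\mathrm{dist}(\cdot,\partial\O_w)^s)$ with $s>-1$, hence absolutely integrable.

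Concretely, set $V:=\tfrac12\nabla(|\nabla w|^2)-\Delta w\,\nabla w=\nabla^2 w\,\nabla w-\Delta w\,\nabla w$, so that $f_1=-\tfrac12 w^s(|\nabla w|^2+1)\,\dive\!\big(\tfrac{Vf^2}{|\nabla w|^2}\big)$ and $f_3=w^s\big((\Delta w)^2-|\nabla^2 w|^2\big)f^2+w^s\,\dive(Vf^2)$. Using $\dive(Vf^2)=|\nabla w|^2\,\dive\!\big(\tfrac{Vf^2}{|\nabla w|^2}\big)+\tfrac{\nabla(|\nabla w|^2)\cdot V}{|\nabla w|^2}f^2$ and adding, the coefficient of $\dive\!\big(\tfrac{Vf^2}{|\nabla w|^2}\big)$ reduces to $\tfrac{|\nabla w|^2-1}{2}\,w^s$, giving the pointwise identity
$$f_1+f_3=\frac{|\nabla w|^2-1}{2}\,w^s\,\dive\!\Big(\frac{Vf^2}{|\nabla w|^2}\Big)+w^s\,\frac{\nabla(|\nabla w|^2)\cdot V}{|\nabla w|^2}\,f^2+w^s\big((\Delta w)^2-|\nabla^2 w|^2\big)f^2.$$
By the interior equation in \eqref{e:hodo-pre-transform}, $\tfrac{|\nabla w|^2-1}{2}=-\tfrac{w\Delta w}{s}$, so the leading term equals $-\tfrac1s w^{s+1}\Delta w\,\dive\!\big(\tfrac{Vf^2}{|\nabla w|^2}\big)$, which I would integrate by parts over $\O_w\cap\text{supp}(f)$. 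Its boundary contribution, after rewriting $w^{s+1}\Delta w=\tfrac s2 w^s(1-|\nabla w|^2)$ by the same equation, is a multiple of $\int_{\partial\O_w}w^s(1-|\nabla w|^2)\,\tfrac{(V\cdot\nu)f^2}{|\nabla w|^2}\,d\HH^{d-1}$, and it vanishes: $V$ and $|\nabla w|$ are bounded up to $\partial\O_w$, $f$ is compactly supported, and $|\nabla w|=1$ on $\partial\O_w$ together with $w\asymp\mathrm{dist}(\cdot,\partial\O_w)$ force $w^s(1-|\nabla w|^2)=O(\mathrm{dist}^{s+1})\to 0$ since $s+1>0$.

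In the remaining volume integral I would expand $\nabla(w^{s+1}\Delta w)=\tfrac s2\nabla\!\big(w^s(1-|\nabla w|^2)\big)=\tfrac s2\big(sw^{s-1}(1-|\nabla w|^2)\nabla w-w^s\nabla(|\nabla w|^2)\big)$ and convert $w^{s-1}(1-|\nabla w|^2)$ back into $\tfrac2s w^s\Delta w$ via \eqref{e:hodo-pre-transform}; the piece proportional to $\nabla(|\nabla w|^2)\cdot V$ produced by the integration by parts cancels half of the corresponding term above, leaving
$$\int_{\O_w}(f_1+f_3)\,dx=\int_{\O_w}w^s\Big(\Delta w\,\frac{\nabla w\cdot V}{|\nabla w|^2}+\frac12\,\frac{\nabla(|\nabla w|^2)\cdot V}{|\nabla w|^2}+(\Delta w)^2-|\nabla^2 w|^2\Big)f^2\,dx.$$
Finally, using $\tfrac12\nabla(|\nabla w|^2)=\nabla^2 w\,\nabla w$ one has $\nabla w\cdot V=\nabla^2 w\,\nabla w\cdot\nabla w-|\nabla w|^2\Delta w$ and $\nabla(|\nabla w|^2)\cdot V=2|\nabla^2 w\,\nabla w|^2-2\Delta w\,(\nabla^2 w\,\nabla w\cdot\nabla w)$; substituting, the two terms in $\Delta w\,(\nabla^2 w\,\nabla w\cdot\nabla w)/|\nabla w|^2$ cancel, the two terms $(\Delta w)^2$ cancel, and what survives is exactly $-w^s\big(|\nabla^2 w|^2-\tfrac{|\nabla^2 w\,\nabla w|^2}{|\nabla w|^2}\big)f^2=-w^s|\nabla w|^2\mathcal{A}_w^2 f^2$, which is the claim. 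The only genuinely delicate point is the vanishing of that single boundary term, and this is precisely where the smoothness of $w$ up to the free boundary proved in \cref{s:smooth} (via \cref{prop:k-implies-k+1}) is used; everything else is bookkeeping with the equation $\Delta w=\tfrac s2\tfrac{1-|\nabla w|^2}{w}$ and pointwise linear algebra.
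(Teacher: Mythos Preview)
Your proof is correct and takes essentially the same route as the paper: both reduce to the vanishing of $\int_{\Omega_w}\dive F\,dx$ for the vector field $F=-\tfrac12\, w^s(1-|\nabla w|^2)\,\tfrac{Vf^2}{|\nabla w|^2}$ (this is exactly your $-\tfrac1s\, w^{s+1}\Delta w\cdot\tfrac{Vf^2}{|\nabla w|^2}$ after one use of the equation), and both invoke the smoothness from \cref{prop:k-implies-k+1} to see that $F$ extends continuously to zero on $\partial\Omega_w$. Your organization is cleaner --- you spot the cancellation in the coefficient of $\dive\!\big(\tfrac{Vf^2}{|\nabla w|^2}\big)$ immediately and integrate by parts once, whereas the paper fully expands the divergence in $f_3$, splits into $f_0+f_{\mathrm{parts}}$, and manipulates until it reaches the pointwise identity $f_1+f_3=-w^s|\nabla w|^2\mathcal{A}_w^2 f^2+\dive F$ before integrating.
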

\begin{proof}
Since $w$ is a regular solution in $\text{supp}(f)$ with $C^{1,\alpha}$-regular free boundary in $\text{supp}(f)$, then, by \cref{prop:k-implies-k+1}, 
$$
\text{$w\in C^{\infty}(\overline\Omega_w\cap \text{supp}(f))$ and the free boundary $\partial\Omega_w$ is smooth in $\text{supp}(f)$.} 
$$
Expanding the divergence of the second term in $f_3$, we get \bea f_3&:=w^s\Big((\Delta w)^2-|\nabla^2w|^2\Big)f^2+w^s\dive\left[\left(\frac{\nabla(|\nabla w|^2)}2-\Delta w\nabla w\right)f^2\right]\\
&=w^s\Big((\Delta w)^2-|\nabla^2w|^2\Big)f^2+w^s \frac{\Delta (|\nabla w|^2)}{2}f^2+w^s\frac{\nabla(|\nabla w|^2)}2\cdot \nabla f^2- w^s\mathrm{div}\Big((\Delta w \nabla w)f^2\Big)\\&=w^s\Big((\Delta w)^2-|\nabla^2w|^2\Big)f^2+w^s \frac{\Delta (|\nabla w|^2)}{2}f^2+\dive\left(w^s\frac{\nabla(|\nabla w|^2)}2f^2\right)\\&\qquad-\dive\left(w^s\frac{\nabla(|\nabla w|^2)}2\right)f^2- w^s\mathrm{div}\Big((\Delta w \nabla w)f^2\Big).
\eea
Since $$\dive\left(w^s\frac{\nabla(|\nabla w|^2)}{2}\right)=\frac12\nabla w^s\cdot\nabla(|\nabla w|^2) +\frac12w^s \Delta (|\nabla w|^2),$$ then 
\bea f_3&=w^s\Big((\Delta w)^2-|\nabla^2w|^2\Big)f^2+\dive\left(w^s\frac{\nabla(|\nabla w|^2)}2f^2\right)
-\frac12\nabla w^s\cdot \nabla (|\nabla w|^2)f^2\\&\qquad- w^s\mathrm{div}\Big((\Delta w \nabla w)f^2\Big).
\eea
Then we can rewrite $f_1+f_3=f_0+f_{\text{parts}},$ where
\bea f_0:=w^s\Big((\Delta w)^2-|\nabla^2w|^2\Big)f^2-\frac12\nabla w^s\cdot \nabla (|\nabla w|^2)f^2
\eea
and
\bea
f_{\text{parts}} &:=\frac12 w^s \big(|\nabla w|^2 +1\big)\mathrm{div}\left[\left( \frac{\Delta w\nabla w}{|\nabla w|^2} - \frac12 \frac{\nabla (|\nabla w|^2)}{|\nabla w|^2}\right)f^2\right]
+\mathrm{div}\left(w^s  \frac{\nabla(|\nabla w|^2)}{2}f^2\right) \\&\qquad -  w^s\mathrm{div}\Big((\Delta w \nabla w)f^2\Big). \eea
By rewriting $f_{\text{parts}}$, we get 
\bea
f_{\text{parts}}&=\dive\left[w^s\left(\big(|\nabla w|^2+1\big)\left(\frac12\frac{\Delta w\nabla w}{|\nabla w|^2}-\frac14 \frac{\nabla (|\nabla w|^2)}{|\nabla w|^2}\right)-\Delta w\nabla w+\frac{\nabla(|\nabla w|^2)}{2}\right)f^2\right]\\&\qquad+ \int_{\Omega_w} \left[ - \frac12 \left(\frac{\Delta w \nabla w}{|\nabla w|^2} - \frac12 \frac{\nabla (|\nabla w|^2)}{|\nabla w|^2}\right)\nabla \big(w^s \big(1+|\nabla w|^2\big)\big)+\Delta w \nabla w \cdot \nabla w^s\right]f^2
\\&= \mathrm{div}(F)\\
& \qquad+ \left[ - \frac12 \left(\frac{\Delta w \nabla w}{|\nabla w|^2} - \frac12 \frac{\nabla (|\nabla w|^2)}{|\nabla w|^2}\right)\nabla \big(w^s \big(1+|\nabla w|^2\big)\big)+\Delta w \nabla w \cdot \nabla w^s\right]f^2
\eea
where 
$$
F:= \frac12  w^s(1-|\nabla w|^2)\left(\frac{\Delta w \nabla w}{|\nabla w|^2} - \frac{\nabla^2 w \nabla w}{|\nabla w|^2}\right)f^2.
$$
Therefore 
\bea 
f_{\text{parts}}-\dive(F)&= \bigg[- \frac12 \left(\frac{\Delta w \nabla w}{|\nabla w|^2} - \frac12 \frac{\nabla (|\nabla w|^2)}{|\nabla w|^2}\right)\Big(\nabla w^s  \big(1+|\nabla w|^2\big)+w^s\nabla (|\nabla w|^2)\Big)\\&\qquad+\Delta w \nabla w \cdot \nabla w^s\bigg]f^2\\&=
\frac12 \frac{\Delta w \nabla w\cdot\nabla w^s}{|\nabla w|^2}\big(|\nabla w|^2-1\big)f^2-\frac12w^s\frac{\Delta w \nabla w\cdot\nabla(|\nabla w|^2)}{|\nabla w|^2}f^2\\&\qquad+\frac14\frac{\nabla (|\nabla w|^2)\cdot\nabla w^s}{|\nabla w|^2}\big(1+|\nabla w|^2\big)f^2
+\frac14w^s \frac{|\nabla (|\nabla w|^2)|^2}{|\nabla w|^2}f^2\\&=
-w^s (\Delta w)^2 f^2 
-\frac12w^s\frac{\Delta w \nabla w\cdot\nabla(|\nabla w|^2)}{|\nabla w|^2}f^2\\&\qquad+\frac14\frac{\nabla (|\nabla w|^2)\cdot\nabla w^s}{|\nabla w|^2}\big(1+|\nabla w|^2\big)f^2
+w^s \frac{|\nabla^2w\nabla w|^2}{|\nabla w|^2}f^2
\eea
where in the last equality we used the equation for $w$ (see \eqref{e:hodo-pre-transform}).
Combining the above identities, we have
\bea f_0+f_{\text{parts}}&=-w^s\left(|\nabla^2w|^2-\frac{|\nabla^2w\nabla w|^2}{|\nabla w|^2}\right)f^2-\frac12\nabla w^s\cdot \nabla (|\nabla w|^2)f^2\\&\qquad-\frac12w^s\frac{\Delta w \nabla w\cdot\nabla(|\nabla w|^2)}{|\nabla w|^2}f^2+\frac14\frac{\nabla (|\nabla w|^2)\cdot\nabla w^s}{|\nabla w|^2}\big(1+|\nabla w|^2\big)f^2+\dive(F)\\&=-w^s\left(|\nabla^2w|^2-\frac{|\nabla^2w\nabla w|^2}{|\nabla w|^2}\right)f^2
-\frac12w^s\frac{\Delta w \nabla w\cdot\nabla(|\nabla w|^2)}{|\nabla w|^2}f^2\\&\qquad+\frac14\frac{\nabla (|\nabla w|^2)\cdot\nabla w^s}{|\nabla w|^2}\big(1-|\nabla w|^2\big)f^2+\dive(F)
\eea
and so
$$
f_0+f_{\text{parts}}=-w^s\left(|\nabla^2w|^2-\frac{|\nabla^2w\nabla w|^2}{|\nabla w|^2}\right)f^2+\dive(F),
$$
where in the last equality we used the PDE satisfied by $w$ (see \eqref{e:hodo-pre-transform}). Hence, as in the proof of \cref{l:useful} we can apply the Gauss-Green formula as in \cite[Proposition 2.7]{gaussgreen}. In fact, since $w\in C^{\infty}(\overline\Omega_w\cap \text{supp}(f))$, the vector field $F$ can be extended continuously to zero on the free boundary $\partial\Omega_w$ in $\text{supp}(f)$ and so $F$ is divergence free in $\O_w\cap \text{supp}(f)$.
Then the result follows integrating the last identity and using the definition of $\mathcal{A}_w^2$.
\end{proof}
We conclude with the proof of the stability condition in \cref{t:sing.min.cones} and \cref{prop:stability-cond}.
\begin{proof}[Proof of \cref{prop:stability-cond}]
    It follows immediately by combining \cref{lemma:computation-2-var} and \cref{lemma:computation-2-var2}. 
\end{proof}
\begin{proof}[Proof of \cref{t:sing.min.cones}]
Since the free boundary $\partial \O_w$ is a smooth cone outside the origin, for every $f \in C^\infty_c(\R^d\setminus\{0\})$ we can apply \cref{prop:stability-cond} to conclude.
\end{proof}
\begin{remark}[Stability condition for stable solutions]\label{rem:holds-for-global}
    Using \cref{prop:stability-cond}, we observe that \cref{t:sing.min.cones} holds even for global stable solutions $u$ of $\mathcal{J}_\gamma$,  regular outside the origin, in the sense of \cref{def:definition3.2} and \cref{def:definition2.1}. 
    Similarly, \cref{t:sing.min.cones} holds for $w$ under the corresponding assumptions.
\end{remark}
\begin{remark}[Stability conditions for general exponents]\label{rem:differences}
Recently in \cite{thomas}, the authors obtained a stability condition  for the Alt-Phillips problem with positive exponents, by computing the second variations of the form
$$
t \mapsto \mathcal{E}_s\left(w\circ (\mathrm{Id}+t \xi)^{-1}\right),\quad\mbox{where }\xi:=\frac{\nabla w}{|\nabla w|^2}\varphi,
$$
and $\varphi \in C^\infty_c(\R^d \setminus \{0\})$. 
Their condition has a different form from our \eqref{e:variations} and is given by \be\label{eq:stability-thomas}\int_{\Omega_w}w^s|\nabla \varphi|^2\,dx\ge\int_{\Omega_w}w^s\frac{\Delta w}{w}\varphi^2\,dx,\quad \mbox{for every }\varphi \in C^\infty_c(\R^d \setminus \{0\}).\ee
The choice of such variations is natural and it coincides with the one considered by Caffarelli, Jerison and Kenig in \cite{CaffarelliJerisonKenig04:NoSingularCones3D}, for the Alt-Caffarelli problem ($\gamma =0$), where they show that 
$$
\int_{\O_w}|\nabla \varphi|^2\,dx\geq \int_{\partial \O_w}H\varphi^2\,d\mathcal{H}^{d-1},\quad \mbox{for every }\varphi \in C^\infty_c(\R^d \setminus \{0\}).
$$
Nevertheless, it is well-known that such latter inequality, can be also written in a Sternberg-Zumbrun form \cite{sz98}. Indeed, by choosing $\varphi:=|\nabla w|f$ and integrating by parts, we get 
$$
\int_{ \O_w} |\nabla w|^2\left(|\nabla f|^2 - \mathcal{A}^2_w\, f^2\right)\,dx \geq 0,\quad \mbox{where }\mathcal{A}^2_w:= \frac{|\nabla^2 w|^2}{|\nabla w|^2} - \frac{|\nabla^2 w\nabla w|^2}{|\nabla w|^4},
$$
which is exactly \cref{t:sing.min.cones} in the case $s=0$ (i.e.,~$\gamma=0$).
Naturally, a similar formulation holds for the case of positive exponents.

Surprisingly, in the case $\gamma \in (-2,0)$, the presence of the singular weight $w^s$ prevents to write both the conditions. Heuristically, the stability condition can be computed by considering only variations of the form \eqref{e:variations}, which are more close to the one exploited in the study of stable minimal surfaces. Indeed, by comparing the stability condition \eqref{eq:ripresa} below with the one for positive exponents \eqref{eq:stability-thomas}, we observe that in the negative regime, it is not possible to split the integrals and integrate by parts the divergence term in \eqref{eq:ripresa}, since the weight $w^s$ is singular on $\partial \Omega_w$. Moreover, for negative exponents, such integration can be computed if and only if the mean curvature of $\partial \O_w$ is identically zero. 
\end{remark}

\section{Axially symmetric cones}\label{sec:axially}
In this section, we consider the specific case of minimizing axially symmetric cones, namely homogeneous minimizers $u$ that are invariant under rotations around a fixed axis. Precisely, we say that $u$ is axially symmetric if, up to a rotation, we have that for every $x =(x',x_d)\in \R^d$
$$u(x',x_d)=u(\tau,x_d),\quad\text{where } \tau:=|x'|.$$
The main result of the section is \cref{thm:axially}, namely that minimizing axially symmetric cones are one-dimensional in low dimensions.\\\vspace{-0.35cm}

Before presenting the proof of the main result, we begin by rephrasing the stability condition under axial symmetry as an Hardy-type inequality. For simplicity of notation, we set
$$u_\tau:=\nabla u\cdot \frac{(x',0)}{\tau}.$$ 
The same notations apply to $w:=\beta u^{1/\beta}$.
The following is the main result towards the proof of \cref{thm:axially}. 
\begin{proposition}\label{proposition:d-2}
    Let $\gamma \in (-2,0)$, $d\ge3$ and $u\in C^{0,\beta}(\R^d)$ be a $\beta$-homogeneous global minimizer of $\mathcal{J}_{\gamma}$ in $\R^d$; or a global stable solution in the sense of \cref{def:definition3.2}. Assume that $u$ is axially symmetric and that is regular outside the origin, in the sense of \cref{def:definition2.1}.
    Then
    \be\label{d-21}
    \int_{\Omega_u} u_\tau^2 \left(|\nabla \eta|^2-(d-2)\frac{\eta^2}{\tau^2}\right)\,dx\ge0,\ee for every $\eta\in C^\infty_c(\R^d)$. 
    
    Equivalently, let $w\in C^{0,1}(\R^d)$ be a $1$-homogeneous global minimizer of $\mathcal{E}_{s}$ in $\R^d$; or a global stable solution, in the sense of \cref{def:definition3.2}. Assume that $w$ is axially symmetric and that is regular outside the origin, in the sense of \cref{def:definition2.1}. 
    Then
    \be\label{d-22}
    \int_{\Omega_w}w^s w_\tau^2 \left(|\nabla \eta|^2-(d-2)\frac{\eta^2}{\tau^2}\right)\,dx\ge0,\ee for every $\eta\in C^\infty_c(\R^d)$.
\end{proposition}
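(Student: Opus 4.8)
The plan is to specialize the general stability condition from \cref{prop:stability-cond} to the axially symmetric setting by choosing test functions of the form $f = f(\tau,x_d)$ adapted to the symmetry, and then to reorganize the resulting quadratic form into a one-dimensional Hardy inequality in the $\tau$-variable. I will carry out the computation for $w$ (the case of $u$ being identical), so the starting point is
$$
\int_{\Omega_w} w^s|\nabla w|^2\left(|\nabla f|^2 - \mathcal{A}_w^2\, f^2\right)\,dx\geq 0\quad\text{for all }f\in C^\infty_c(\R^d\setminus\{0\}),
$$
valid since $w$ is regular outside the origin. The first step is to exploit the axial symmetry of $w$ to compute $\mathcal{A}_w^2$ explicitly. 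Writing $w=w(\tau,x_d)$, the level sets of $w$ are surfaces of revolution, so by the Sternberg--Zumbrun decomposition \eqref{szformula} the quantity $\mathcal{A}_w^2$ splits into the squared second fundamental form of the level set plus a tangential-gradient term. The key point is that for a hypersurface of revolution, one of the principal curvatures is the ``azimuthal'' one, which equals (a multiple of) $\nu\cdot (x',0)/\tau$ divided by $\tau$, i.e.\ it is controlled by $w_\tau/(\tau|\nabla w|)$. Concretely, I expect an identity of the form
$$
\mathcal{A}_w^2 \;=\; \big(\text{meridional terms}\big) \;+\; (d-2)\,\frac{w_\tau^2}{\tau^2|\nabla w|^2},
$$
where the factor $d-2$ comes from the $(d-2)$-dimensional sphere of revolution. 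This is the arithmetic heart of the argument and I expect it to be the main obstacle: one must carefully verify that the ``meridional'' contribution to $\mathcal{A}_w^2$ is nonnegative (or can be absorbed), so that dropping it preserves the inequality in the right direction, and one must confirm the exact constant $d-2$.

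The second step is the choice of test function. Following the standard device for turning a stability inequality into a Hardy inequality, I will take $f = \eta \cdot (\text{something built from } w)$; the natural choice, dictated by the structure $|\nabla w|^2 f^2 \mapsto w_\tau^2\eta^2$ on the singular term, is $f$ such that $|\nabla w| f$ reduces to $|w_\tau|\,\eta$ in the relevant term — in practice one tests with $f$ and then integrates by parts using the equation for $w$ in $\Omega_w$ (the interior PDE \eqref{e:hodo-pre-transform}) to trade the full gradient term $|\nabla w|^2|\nabla f|^2$ against $w_\tau^2|\nabla\eta|^2$ plus lower-order pieces that cancel against the meridional part of $\mathcal{A}_w^2$. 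The cancellation should be exactly the content of the observation, already used in \cite{CaffarelliJerisonKenig04:NoSingularCones3D} for $\gamma=0$ and in \cite{thomas}, that $w_\tau$ (equivalently the angular derivative of $w$) is itself a solution of the linearized equation, which makes it the optimal competitor in the meridional directions and leaves precisely the azimuthal Hardy term.

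Finally, assembling the two steps: after the substitution and integration by parts, all meridional/normal contributions should combine into a nonnegative quantity that we discard, and the stability inequality collapses to
$$
\int_{\Omega_w} w^s w_\tau^2\left(|\nabla\eta|^2 - (d-2)\frac{\eta^2}{\tau^2}\right)dx \geq 0,
$$
which is \eqref{d-22}; the analogous manipulation with the weight replaced by $1$ and $w$ replaced by $u$ gives \eqref{d-21}, or alternatively \eqref{d-21} follows from \eqref{d-22} via $w=\beta u^{1/\beta}$ together with the chain-rule relation between $w_\tau$, $u_\tau$ and the weight, exactly as the two formulations of \cref{t:sing.min.cones} are shown to be equivalent. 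The reduction to global stable solutions rather than minimizers is automatic since \cref{prop:stability-cond} already holds in that generality, and the homogeneity is used only to guarantee that $\partial\Omega_w\setminus\{0\}$ is covered by the regularity hypothesis so that $f\in C^\infty_c(\R^d\setminus\{0\})$ is an admissible class. The one genuine subtlety to watch is the behavior near the axis $\{\tau=0\}\cap\Omega_w$: there $w_\tau$ vanishes and $w$ is smooth, so the weight $w_\tau^2/\tau^2$ stays bounded and no boundary term is produced, but this should be checked explicitly when justifying the integrations by parts.
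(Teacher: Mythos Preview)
Your overall strategy is correct and matches the paper's: the key choice is $f = w_\tau\,\eta/|\nabla w|$ (in the paper's notation, $\varphi = w_\tau\,\eta$ after first substituting $f=\varphi/|\nabla w|$), and the meridional terms vanish \emph{exactly} --- not merely as a nonnegative remainder --- because $c:=w_\tau$ satisfies the linearized equation
\[
\Delta c + s\,\frac{\nabla w\cdot\nabla c}{w} + \frac{\Delta w}{w}\,c \;=\; (d-2)\,\frac{c}{\tau^2}\qquad\text{in }\Omega_w\cap\{\tau>0\}.
\]
Your remark that one must check the meridional part is ``nonnegative (or can be absorbed)'' is therefore a red herring; the point is the identity above, which you do eventually invoke.

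There is, however, a genuine gap: you discuss the axis $\{\tau=0\}$ but say nothing about the free boundary $\partial\Omega_w$. After testing with $\varphi=w_\tau\eta$ and integrating by parts, a divergence term $\int_{\Omega_w}\dive(w^s F)\,dx$ survives, with
\[
F \;=\; \Big(w_\tau\,\nabla w_\tau - \tfrac{\nabla(|\nabla w|^2)}{2|\nabla w|^2}\,w_\tau^2\Big)\eta^2.
\]
For $s\in(-1,0)$ the weight $w^s$ is \emph{singular} on $\partial\Omega_w$, so this term does not vanish by a naive Gauss--Green argument. The paper approximates $\Omega_w$ by $\{w>\eps\}$ and shows that $F\cdot\nu=0$ on $\partial\Omega_w$, using the identity $\partial_\nu w_\tau = H\,w_\tau$ on the free boundary (a consequence of $|\nabla w|\equiv 1$ there) together with $H = \tfrac{\nabla(|\nabla w|^2)}{2|\nabla w|^2}\cdot\nu$. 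Then $|F\cdot\nu|=O(\eps)$ near $\partial\{w>\eps\}$, and the boundary integral is $O(\eps^{1+s})\to 0$. This step is specific to the negative-exponent regime and cannot be skipped.

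One organizational difference worth noting: the paper does not decompose $\mathcal{A}_w^2$ geometrically. It first rewrites the general stability inequality (via $f=\varphi/|\nabla w|$ and Bochner's identity) in the form
\[
\int_{\Omega_w}w^s|\nabla\varphi|^2\,dx \;\ge\; \int_{\Omega_w}\Big[\dive\Big(w^s\tfrac{\nabla(|\nabla w|^2)}{2|\nabla w|^2}\varphi^2\Big)+w^s\tfrac{\Delta w}{w}\varphi^2\Big]\,dx,
\]
and only then specializes to $\varphi=w_\tau\eta$; this makes both the cancellation and the residual boundary term explicit, whereas a direct geometric splitting of $\mathcal{A}_w^2$ tends to obscure where the $\partial\Omega_w$ contribution enters.
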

The proof of \cref{proposition:d-2} is carried out by working directly with $w$ and proving \eqref{d-22}, from which \eqref{d-21} immediately follows. The key point is to use $f:=\frac{1}{|\nabla w|}w_\tau \eta $ as a test function in the stability condition of \cref{t:sing.min.cones}, where $\eta$ is a cut-off function with compact support in $\R^d$. We stress, in relation to \eqref{e:variations}, that such test function corresponds to testing the second variation of $\mathcal{E}_s$ with the vector field 
$\xi=\frac{\nabla w}{|\nabla w|^2}w_\tau \eta.
$\\
We proceed directly with the proof of \cref{proposition:d-2} (see also \cite{thomas} for a similar result in the context of positive exponents).
\begin{proof}[Proof of \cref{proposition:d-2}]
   Since $w$ is a regular solution in $\R^d\setminus\{0\}$ with $C^{1,\alpha}$-regular free boundary in $\R^d\setminus\{0\}$, then, by \cref{prop:k-implies-k+1}, 
$$
\text{$w\in C^{\infty}(\overline\Omega_w\cap (\R^d\setminus\{0\}))$ and the free boundary $\partial\Omega_w$ is smooth in $\R^d\setminus\{0\}$.} 
$$
   The proof is divided in three steps.\\ \vspace{-0.3cm}

    \noindent\textit{Step 1.} We start by proving that \be\label{eq:ripresa} \int_{\Omega_w}w^s|\nabla \varphi|^2\,dx-\int_{\Omega_w}\bigg[\dive\left(w^s\frac{\nabla(|\nabla w|^2)}{2|\nabla w|^2}\varphi^2\right)+w^s\frac{\Delta w}{w}\varphi^2\bigg]\,dx\ge0,
\ee 
    for every $\varphi \in C^\infty_c(\R^d\setminus \{0\})$, computing the following second variation  
$$
\delta^2 \mathcal{E}_s(w,\R^d)\left[\frac{\nabla w}{|\nabla w|^2}\varphi\right]\geq 0.
$$
Precisely, by substituting $f:=\frac{\varphi}{|\nabla w|}$ in the stability condition for $w$ in \cref{t:sing.min.cones},
 we get $$\int_{\Omega_w}w^s|\nabla w|^2\left|\nabla \left(\frac{\varphi}{|\nabla w|}\right)\right|^2\,dx-\int_{\Omega_w}w^s|\nabla w|^2\left(\frac{|\nabla^2w|^2}{|\nabla w|^2}-\frac{|\nabla^2w\nabla w|^2}{|\nabla w|^4}\right)\frac{\varphi^2}{|\nabla w|^2}\,dx\ge0.$$
Since \bea |\nabla w|^2\left|\nabla \left(\frac{\varphi}{|\nabla w|}\right)\right|^2&=|\nabla w|^2\left|\frac{\nabla \varphi}{|\nabla w|}-\frac{\varphi\nabla|\nabla w|}{|\nabla w|^2}\right|^2
\\&=|\nabla w|^2\left(\frac{|\nabla \varphi|^2}{|\nabla w|^2}+\varphi^2\frac{|\nabla |\nabla w||^2}{|\nabla w|^4}-\frac{\nabla (|\nabla w|)\cdot \nabla \varphi^2}{|\nabla w|^3}\right)
\\&=|\nabla \varphi|^2+\varphi^2\frac{|\nabla^2w\nabla w|^2}{|\nabla w|^4}-\frac{\nabla |\nabla w|}{|\nabla w|}\cdot \nabla \varphi^2,
\eea
then
\bea \int_{\Omega_w}w^s|\nabla \varphi|^2\,dx- \int_{\Omega_w}\left[w^s\frac{\nabla |\nabla w|}{|\nabla w|}\cdot \nabla \varphi^2+w^s\frac{|\nabla^2w|^2}{|\nabla w|^2}\varphi^2-2w^s\frac{|\nabla^2w\nabla w|^2}{|\nabla w|^4}\varphi^2 \right]\,dx\ge0.\eea
By applying the Leibnitz rule to the second term above, we get
\bea \int_{\Omega_w}w^s|\nabla \varphi|^2\,dx&-\int_{\Omega_w}\bigg[\dive\left(w^s \frac{\nabla |\nabla w|}{|\nabla w|}\varphi^2\right)-\dive\left(w^s\frac{\nabla|\nabla w|}{|\nabla w|}\right)\varphi^2+w^s\frac{|\nabla^2w|^2}{|\nabla w|^2}\varphi^2\\&\qquad-2w^s\frac{|\nabla^2w\nabla w|^2}{|\nabla w|^4}\varphi^2 \bigg]\,dx\ge0.
\eea
Expanding the third term above using that $$\dive\left(w^s\frac{\nabla|\nabla w|}{|\nabla w|}\right)=\dive\left(\frac{\nabla(|\nabla w|^2)}{2|\nabla w|^2}\right),$$ we have
\bea 
\int_{\Omega_w}w^s|\nabla \varphi|^2\,dx&-\int_{\Omega_w}\bigg[\dive\left(w^s\frac{\nabla|\nabla w|}{|\nabla w|}\varphi^2\right)
-\frac{\nabla w^s\cdot\nabla(|\nabla w|^2)}{2|\nabla w|^2}\varphi^2-w^s\frac{\Delta (|\nabla w|^2)}{2|\nabla w|^2}\varphi^2\\&\qquad+w^s\frac{|\nabla (|\nabla w|^2)|^2}{2|\nabla w|^4}\varphi^2+w^s\frac{|\nabla^2w|^2}{|\nabla w|^2}\varphi^2-2w^s\frac{|\nabla^2w\nabla w|^2}{|\nabla w|^4}\varphi^2\bigg]\,dx\ge0.\eea
Using the Bochner's identity and the PDE satisfied by $w$ (see \eqref{e:hodo-pre-transform}), we get
\begin{align*}
\frac12\Delta (|\nabla w|^2)&=|\nabla^2w|^2+\nabla(\Delta w)\cdot \nabla w\\
&=|\nabla^2w|^2- 
\frac{\Delta w}{w}|\nabla w|^2-\frac s2\frac{\nabla(|\nabla w|^2)\cdot \nabla w }{w}.
\end{align*} 
Moreover, since $$\frac{|\nabla (|\nabla w|^2)|^2}{2|\nabla w|^4}
=2\frac{|\nabla^2w\nabla w|^2}{|\nabla w|^4},$$
then we obtain \eqref{eq:ripresa}, concluding Step 1.
\\ \vspace{-0.3cm}
   
    \noindent\textit{Step 2.} 
    In the second step, we prove \eqref{d-22} for every $\eta\in C^\infty_c(\R^d)$ such that $\text{supp}(\eta)\subset \{\tau>0\}$.
    We choose $\varphi:=c\eta$ in the above inequality \eqref{eq:ripresa}, where $c:=w_\tau$. Since
\bea \int_{\Omega_w}w^s|\nabla (c\eta)|^2\,dx&=\int_{\Omega_w}\bigg[w^s|\nabla \eta|^2c^2+w^s|\nabla c|^2\eta^2+w^sc\nabla c\cdot\nabla \eta^2\bigg]\,dx\\&=\int_{\Omega_w}\bigg[w^s|\nabla \eta|^2c^2+w^s|\nabla c|^2\eta^2+\dive\big(w^s(c\nabla c) \eta^2\big)-\dive(w^sc\nabla c)\eta^2\bigg]\,dx\\&=\int_{\Omega_w}\bigg[w^s|\nabla \eta|^2c^2+\dive\big(w^s(c\nabla c) \eta^2\big)-w^sc\Delta c\eta^2-sw^{s-1}c\nabla w\cdot\nabla c\eta^2\bigg]\,dx,
\eea 
then \bea\label{eq:richiama0}\begin{aligned} 0&\le\int_{\Omega_w}\bigg[w^s|\nabla (c\eta)|^2\,dx-\dive\left(w^s\frac{\nabla(|\nabla w|^2)}{2|\nabla w|^2}c^2\eta^2\right)-w^s\frac{\Delta w}{w}c^2\eta^2\bigg]\,dx\\&=\int_{\Omega_w}\bigg[w^s|\nabla \eta|^2c^2-w^s\bigg(\Delta c+s\frac{\nabla w\cdot\nabla c}{w}+\frac{\Delta w}{w}c\bigg)c\eta^2
\\&\qquad+\dive\left(w^s(c\nabla c) \eta^2-w^s\frac{\nabla(|\nabla w|^2)}{2|\nabla w|^2}c^2\eta^2\right)\bigg]\,dx.
\end{aligned}\eea
By direct differentiation (see for instance \cite[Equation 6.4]{thomas}) we have that
$$\Delta c+s\frac{\nabla w\cdot\nabla c}{w}+\frac{\Delta w}{w}c=(d-2)\frac{c}{\tau^2}\quad\text{in } \Omega_w\cap\{\tau>0\},$$
then \bea\label{eq:richiama2}\begin{aligned} 0&\le\int_{\Omega_w}\bigg[w^s|\nabla \eta|^2c^2-(d-2)w^s\frac{c^2}{\tau^2}\eta^2+\dive\left(w^s(c\nabla c) \eta^2-w^s\frac{\nabla(|\nabla w|^2)}{2|\nabla w|^2}c^2\eta^2\right)\bigg]\,dx.
\end{aligned}\eea
The step is concluded once we prove that the third term above is zero. Hence, set $F$ to be the vector field defined as $$F:=(c\nabla c) \eta^2-\frac{\nabla(|\nabla w|^2)}{2|\nabla w|^2}c^2\eta^2,$$ then 
$$\int_{\Omega_w}\dive(w^sF)\,dx=\lim_{\eps\to0^+}\int_{\{w>\eps\}}\dive(w^sF)\,dx=\lim_{\eps\to0}\int_{\partial\{w>\eps\}}w^s (F\cdot \nu)\,d\HH^{d-1},$$
where $\nu=-\frac{\nabla w}{|\nabla w|}$. We observe that the function $F\cdot \nu$ is smooth in a neighborhood of $\partial\Omega_w \cap\text{supp}(\eta)$. Moreover, if $H$ is the mean curvature of $\partial \O_w$ pointing towards the complement of $\O_w$, by following the same argument in \cite[Equation 4.4]{xavixaviglobal}, we have
$$c_\nu=Hc\quad\text{on }\partial\Omega_w,\qquad \text{and}\qquad H=\frac{\nabla(|\nabla w|^2)}{2|\nabla w|^2}\cdot \nu\quad\text{on }\partial\Omega_w.$$
Thus, since $F\cdot \nu=0$ on $\partial\Omega_w$, there exists $\eps>0$ small enough, so that $$|(F\cdot \nu)| \le C\eps\quad\text{in a neighborhood of $\partial\Omega_w\cap \text{supp}(\eta)$},$$ for some constant $C>0$ depending on $\eta$ but independent of $\eps$.
Thus, since $1+s>0$, we obtain $$\lim_{\eps\to0^+}\left|\int_{\partial\{w>\eps\}}w^s (F\cdot \nu)\,d\HH^{d-1}\right|\le C\lim_{\eps\to0^+}\eps^{1+s}=0,$$ as we claimed.
\\ \vspace{-0.3cm}
   
    \noindent\textit{Step 3.} 
    The proof is concluded once we show that \eqref{d-22} holds for every $\eta\in C^\infty_c(\R^d)$. This extension follows by standard arguments (see e.g.,~\cite{xavixaviglobal,thomas}).  
    Precisely, given $\eta\in C^\infty_c(\R^d)$ we set $\eta_\eps:=\eta \zeta_\eps$, where $\zeta_\eps$ is first chosen as an axially symmetric cut-off function, and subsequently replaced by a radial one. 
    Precisely, if $\zeta\in C^\infty_c(\R)$ is the one dimensional cut-off function such that $\zeta\equiv0$ in $(-\infty,1/2)$ and $\zeta\equiv1$ in $(1,+\infty)$, then we set first $\zeta_\eps=\zeta(\tau/\eps)$ and subsequently $\zeta_\eps=\zeta(|x|/\eps)$.
    Since by regularity of $w$ and the axially symmetric property, we have that $w^sw_\tau^2\to 0$ as $\tau\to0^+$, then
    sending $\eps\to 0^+$ (here we are using the assumption that $d\ge 3$), we conclude the stability inequality \eqref{d-22} for $\eta$.
\end{proof}

Now we are ready to prove \cref{thm:axially}.
\begin{proof}[Proof of \cref{thm:axially}]
    The case $d=2$ follows by standard arguments (see e.g.,~\cite{FVstateofart}). Indeed, we can directly use the logarithmic cut-off function $$f(x):=\begin{cases}
        1&\quad \text{if }|x|\le1,\\
        \frac{\log R-\log|x|}{\log R}&\quad \text{if }1\le|x|\le R,\\
        0&\quad \text{if }|x|\ge R,\\
    \end{cases}$$ in the stability condition \eqref{e:stable1} of \cref{t:sing.min.cones}. The conclusion follwos by applying the Sternberg-Zumbrun formula \eqref{szformula} and sending $R\to+\infty$.

    From now on, we focus on the case $d\ge3$, which allows us to apply \cref{proposition:d-2}. We proceed by proving that $w$ is one-dimensional, which implies the same result on $u$.
    Observe that, by a standard approximation argument, the inequalities in \eqref{proposition:d-2} holds even for test function $\eta$ which are just Lipschitz and not $C^\infty$. Let $\theta>0$ a small constant to be chosen later. For every $\eps\in(0,1)$ and $R\ge1$, we define $\eta_{\eps,R}$ such that    
    $$\eta_{\eps,R}:=\begin{cases}
       \tau^{-\theta}\zeta_R&\quad \text{if }\tau>\eps,\\
       \eps^{-\theta}\zeta_R&\quad \text{if }\tau\le\eps,\\
   \end{cases}$$
where $\zeta_R\in C^\infty(\R^d)$ is a cut-off function such that $\zeta_R\ge0$, $\zeta_R\equiv1$ in $B_R$, $\zeta_R\equiv0$ in $B_{2R}$ and \be\label{eq:stima-1}|\nabla \zeta_R|\le \frac{C}{R}.\ee Since $w$ is $1$-homogeneous, we notice \be\label{eq:stima0}w^sw_\tau^2\le CR^s\quad\text{in }B_{2R}.
\ee
We compute first the two terms of \eqref{d-22} in \cref{proposition:d-2}. First, we have
\bea\int_{\R^d}w^s\frac{w_\tau^2}{\tau^2}\eta_{\eps,R}\,dx=\int_{B_{2R}\cap\{\tau>\eps\}}w^sw_\tau^2\tau^{-2\theta-2}\zeta_R^2\,dx+\int_{B_{2R}\cap\{\tau\le\eps\}}w^sw_\tau^2 \tau^{-2}\eps^{-2\theta}\zeta_R^2\,dx.\eea
   Secondly, since $$|\nabla \eta_{\eps,R}|^2\le\begin{cases}
       \theta^2\tau^{-2\theta-2}\zeta_R&\quad\text{in } B_R\cap \{\tau>\eps\},\\
       \theta^2\tau^{-2\theta-2}\zeta_R+\tau^{-2\theta}|\nabla \zeta_R|^2&\quad\text{in } \left(B_{2R}\setminus B_R\right)\cap \{\tau>\eps\},\\
       \eps^{-2\theta}|\nabla \zeta_R|^2&\quad\text{in } B_{2R}\cap\{\tau\le\eps\},\\
   \end{cases}$$
then
\bea \int_{\R^d}w^s w_\tau^2|\nabla \eta_{\eps,R}|^2\,dx&\le\theta^2\int_{B_{2R}\cap\{\tau>\eps\}}w^sw_\tau^2\tau^{-2\theta-2}\zeta_R^2\,dx+\int_{\left(B_{2R}\setminus B_R\right)\cap\{\tau>\eps\}}w^sw_\tau^2\tau^{-2\theta}|\nabla\zeta_R|^2\,dx\\&\qquad+\int_{B_{2R}\cap\{\tau\le\eps\}}w^sw_\tau^2\eps^{-2\theta}|\nabla \zeta_R|^2\,dx.\eea
Therefore, by \cref{proposition:d-2}, we have that
\be\label{eq:stima3}\begin{aligned}
(d-2)\int_{B_{2R}\cap\{\tau>\eps\}}w^sw_\tau^2\tau^{-2\theta-2}\zeta_R^2\,dx&\le
(d-2)\int_{B_{2R}\cap\{\tau>\eps\}}w^sw_\tau^2\tau^{-2\theta-2}\zeta_R^2\,dx\\&\qquad+(d-2)\int_{B_{2R}\cap\{\tau\le\eps\}}w^sw_\tau^2 \tau^{-2}\eps^{-2\theta}\zeta_R^2\,dx
\\&\le\theta^2\int_{B_{2R}\cap\{\tau>\eps\}}w^sw_\tau^2\tau^{-2\theta-2}\zeta_R^2\,dx \\&\qquad\int_{\left(B_{2R}\setminus B_R\right)\cap\{\tau>\eps\}}w^sw_\tau^2\tau^{-2\theta}|\nabla\zeta_R|^2\,dx\\&\qquad\qquad+\int_{B_{2R}\cap\{\tau\le\eps\}}w^sw_\tau^2\eps^{-2\theta}|\nabla \zeta_R|^2\,dx,
\end{aligned}\ee 
leading to
\bea\label{eq:stima2}\begin{aligned} (d-2-\theta^2)\int_{B_{2R}\cap\{\tau>\eps\}}w^sw_\tau^2\tau^{-2\theta-2}\zeta_R^2\,dx&\le \int_{\left(B_{2R}\setminus B_R\right)\cap\{\tau>\eps\}}w^sw_\tau^2\tau^{-2\theta}|\nabla\zeta_R|^2\,dx\\&\qquad+\int_{B_{2R}\cap\{\tau\le\eps\}}w^sw_\tau^2\eps^{-2\theta}|\nabla \zeta_R|^2\,dx.\end{aligned}
\eea
An estimate of the right-hand side is obtained by considering the two integrals separately.
\begin{itemize}
    \item[(i)] For the first integral, we change the coordinate $x\mapsto (\tau,t)$ with $\tau>\eps$ and $t\in(-2R,2R)$, so that $dx=\tau^{d-2}\,d\tau\,dt$. Then, using \eqref{eq:stima-1} and \eqref{eq:stima0}, we have   
    \bea\label{eq:stima4}\begin{aligned}\int_{\left(B_{2R}\setminus B_R\right)\cap\{\tau>\eps\}}w^sw_\tau^2\tau^{-2\theta}|\nabla\zeta_R|^2\,dx&\le CR^{s-2}\int_{\left(B_{2R}\setminus B_R\right)\cap\{\tau>\eps\}}\tau^{-2\theta}\,dx\\&\le CR^{s-1}\int_{\eps}^{2R}\tau^{d-2-2\theta}\,d\tau\\&=CR^{s-1}\left(R^{d-1-2\theta}-\eps^{d-1-2\theta}\right).
    \end{aligned}\eea
    \item[(ii)] For the second integral, by \eqref{eq:stima-1} and \eqref{eq:stima0}, we have \bea\label{eq:stima5}\int_{B_{2R}\cap\{\tau\le\eps\}}w^sw_\tau^2\eps^{-2\theta}|\nabla \zeta_R|^2\,dx\le CR^{s-2}\eps^{-2\theta}\int_{B_{2R}\cap\{\tau\le\eps\}}\,dx= CR^{s-1}\eps^{d-1-2\theta}.\eea
\end{itemize}
Combining \eqref{eq:stima3} with the last two estimates above, we infer that
\be\label{eq:stima6}(d-2-\theta^2)\int_{B_{2R}\cap\{\tau>\eps\}}w^sw_\tau^2\tau^{-2\theta-2}\zeta_R^2\,dx\le CR^{d+s-2-2\theta}+CR^{s-1}\eps^{d-1-2\theta}.\ee
Finally, the conclusion follows by choosing $\theta\in\R$ so that \be\label{eq:scelta-theta}d-2-\theta^2>0\quad\text{and}\quad d+s-2-2\theta<0.\ee 
Indeed, noting that $d-1-2\theta>\theta^2+1-2\theta\ge0$, we may first let $\eps\to0^+$ and then $R\to+\infty$ in \eqref{eq:stima6}, which, in view of \eqref{eq:scelta-theta}, yields $w_\tau\equiv0$. That is $w$ is one-dimensional. Such a choice of $\theta \in \R$ is possible provided that $$\sqrt{d-2}>\frac{d+s-2}{2}.$$ 
Equivalently, this condition holds if $$2+(1-\sqrt{1-s})^2<d<2+(1+\sqrt{1-s})^2.$$ 
The desired conclusion follows by rewriting the above inequality in terms of $\gamma$.
\end{proof}
\begin{remark}[On the homogeneity assumption]\label{rem:without-homo} 
In the proof of \cref{thm:axially}, the homogeneity assumption is in fact unnecessary. It was only used to derive the estimate \eqref{eq:stima0}. However, this bound actually follows from the regularity of the solution  $u\in C^{0,\beta}(\R^d)$, together with the identity $u_\tau^2=\beta^{-s}w^sw_\tau^2$.
\end{remark}

\section{Asymptotic of stable cones as \texorpdfstring{$\gamma\to-2$}{gammato-2}} 
\label{sec:asymptotic}
In this section, we begin by providing a variational characterization of the stability condition for homogeneous global solutions, formulated as an eigenvalue problem for a weighted Laplace-Beltrami operator. Ultimately, we provide some insight into the limit as $\gamma \to -2$, by showing the the convergence of this stability criterion to that associated with stable minimal cones.

\subsection{A stability criterion for cones} 
In the context of homogeneous solutions with isolated singularities, a stability condition can be reformulated as a lower bound for an eigenvalue problem associated with a Laplace-Beltrami operator on $\mathbb{S}^{d-1}$. As already observed in \cite{JerisonSavin15:NoSingularCones4D} for $\gamma = 0$ (i.e.,~the Alt-Caffarelli problem), and in \cite{Simons:minimal-varieties,CafHarSim} for the limiting case $\gamma \to -2$ (i.e.,~the minimal surfaces), this criterion serves as a key tool in ruling out the presence of singularities for homogeneous free boundaries (see also \cite{pacati} for the analogue in the capillary context).

The following result is the analogue of these criteria for the Alt-Phillips problem, where the eigenvalue problem involves a degenerate Laplace-Beltrami operator on $\mathbb{S}^{d-1}$. 
For clarity, it is stated in terms of the auxiliary function $w := \beta u^{1/\beta}$, although it can be equivalently formulated in terms of $u$. In the following, we denote by $\nabla_S$ and $ \mathrm{div}_S$ respectively the tangential gradient and the tangential divergence on  $\mathbb{S}^{d-1}$.
\begin{proposition}\label{p:stability-criterion-eigen}
    Let $\gamma \in (-2,0)$ and $w \in C^{0,1}(\R^d)$ be a $1$-homogeneous global regular solution outside the origin, in the sense of \cref{def:definition2.1}. Given the regular section $\Sigma_w:=\O_w \cap \mathbb{S}^{d-1}$, consider the degenerate eigenvalue problem on $\mathbb{S}^{d-1}$
    \be\label{e:lambda_s}
\lambda_s(\Sigma_w):= \min_{\substack{\varphi \in C^\infty(\mathbb{S}^{d-1})\\ \varphi\not\equiv0}}\ddfrac{\int_{\Sigma_w}  w^s|\nabla w|^2\big(|\nabla_S \varphi|^2 - \mathcal{A}_w^2 \varphi^2\big)\,d\HH^{d-1}}{\int_{\Sigma_w} w^s|\nabla w|^2 \varphi^2 \,d\HH^{d-1}}\,,
\ee
with $\mathcal{A}^2_w$ as in \eqref{e:stable2}. Then, $u$ is stable in $\R^d$ if and only if 
    \be\label{e:lower-bound-lambda}
    \lambda_s(\Sigma_w)\geq -\left(\frac{d+s-2}{2}\right)^2
    \ee
\end{proposition}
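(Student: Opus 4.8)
The plan to prove \cref{p:stability-criterion-eigen} is to separate variables in polar coordinates and thereby reduce the stability inequality of \cref{t:sing.min.cones} to a one–dimensional weighted Hardy inequality on $(0,\infty)$. Write $x=r\omega$, $r=|x|$, $\omega\in\mathbb S^{d-1}$, and set $g(\omega):=w^s(\omega)|\nabla w(\omega)|^2$ on the section $\Sigma_w$. Since $w$ is $1$-homogeneous, $\nabla w$ is $0$-homogeneous and $\nabla^2 w$ is $(-1)$-homogeneous, so $w^s|\nabla w|^2$ is $s$-homogeneous and $\mathcal{A}_w^2$ is $(-2)$-homogeneous; moreover $x\cdot\nabla w=w$ forces $|\nabla w|>0$ in $\O_w$ and $\omega\cdot\nabla w(\omega)=0$ on $\partial\Sigma_w$, so $w$ vanishes nondegenerately on the smooth hypersurface $\partial\Sigma_w$ of $\mathbb S^{d-1}$ (its gradient there being a unit vector, since $|\nabla w|=1$ on $\partial\O_w$). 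Together with the smoothness from \cref{prop:k-implies-k+1}, this shows $g$ is smooth and strictly positive on $\Sigma_w$ and behaves like $\mathrm{dist}(\cdot,\partial\Sigma_w)^s$ near $\partial\Sigma_w$, so $g\in L^1(\Sigma_w)$ precisely because $s>-1$, while $\mathcal{A}_w^2$ is bounded up to the (smooth) set $\partial\Sigma_w$. Writing $F_r(\omega):=f(r\omega)$ and using $|\nabla f|^2=|\partial_r f|^2+r^{-2}|\nabla_S F_r|^2$ with $dx=r^{d-1}\,dr\,d\HH^{d-1}$, a direct computation yields, for every $f\in C^\infty_c(\R^d\setminus\{0\})$,
\[
\begin{aligned}
Q(f):={}&\int_{\O_w}w^s|\nabla w|^2\big(|\nabla f|^2-\mathcal{A}_w^2 f^2\big)\,dx\\
={}&\int_0^\infty r^{d+s-1}\Big(\int_{\Sigma_w} g\,|\partial_r f|^2\,d\HH^{d-1}\Big)dr+\int_0^\infty r^{d+s-3}\Big(\int_{\Sigma_w} g\big(|\nabla_S F_r|^2-\mathcal{A}_w^2 F_r^2\big)\,d\HH^{d-1}\Big)dr,
\end{aligned}
\]
all integrals being absolutely convergent by the integrability just discussed; recall that, by \cref{t:sing.min.cones} (see also \cref{prop:stability-cond} and \cref{rem:holds-for-global}), the stability of $w$—equivalently of $u$, cf.\ \cref{def:definition3.1}—is exactly the statement $Q(f)\ge0$ for all such $f$.

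For the implication ``$\lambda_s(\Sigma_w)\ge-\big(\tfrac{d+s-2}{2}\big)^2\Rightarrow$ stability'' I would bound the angular integral from below, slice by slice in $r$, by the definition of $\lambda_s(\Sigma_w)$ applied to $F_r\in C^\infty(\mathbb S^{d-1})$, and the radial integral from below, slice by slice in $\omega$, by the sharp weighted Hardy inequality $\int_0^\infty t^{a}|\psi'|^2\,dt\ge\big(\tfrac{a-1}{2}\big)^2\int_0^\infty t^{a-2}|\psi|^2\,dt$ for $\psi\in C^\infty_c((0,\infty))$, with $a:=d+s-1$ (note $a\ne1$ for $\gamma\in(-2,0)$, though this is immaterial), applied to $t\mapsto f(t\omega)$. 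Integrating the two estimates in the remaining variable (Fubini applies by absolute convergence) produces the common factor $\int_{\O_w}w^s|\nabla w|^2|x|^{-2}f^2\,dx\ge0$, so
\[
Q(f)\ \ge\ \Big[\big(\tfrac{d+s-2}{2}\big)^2+\lambda_s(\Sigma_w)\Big]\int_{\O_w}\frac{w^s|\nabla w|^2}{|x|^2}\,f^2\,dx\ \ge\ 0,
\]
which is the required stability of $w$.

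For the converse I would test stability with separated functions: fixing $\varphi\in C^\infty(\mathbb S^{d-1})$ with $\varphi\not\equiv0$ on $\Sigma_w$ and $\psi\in C^\infty_c((0,\infty))$, the choice $f(r\omega):=\psi(r)\varphi(\omega)\in C^\infty_c(\R^d\setminus\{0\})$ and the displayed polar identity give
\[
0\le Q(\psi\varphi)=\Big(\int_0^\infty r^{a}|\psi'|^2\,dr\Big)\!\int_{\Sigma_w} g\,\varphi^2\,d\HH^{d-1}+\Big(\int_0^\infty r^{a-2}|\psi|^2\,dr\Big)\!\int_{\Sigma_w} g\big(|\nabla_S\varphi|^2-\mathcal{A}_w^2\varphi^2\big)\,d\HH^{d-1}.
\]
Dividing by the positive quantity $\big(\int_0^\infty r^{a-2}|\psi|^2\big)\big(\int_{\Sigma_w}g\varphi^2\big)$ and letting $\psi=\psi_k$ run through an almost–optimal sequence for the Hardy inequality (e.g.\ logarithmic truncations of $r\mapsto r^{-(a-1)/2}$, for which $\int r^{a}|\psi_k'|^2/\int r^{a-2}|\psi_k|^2\to\big(\tfrac{a-1}{2}\big)^2=\big(\tfrac{d+s-2}{2}\big)^2$) forces $\int_{\Sigma_w}g(|\nabla_S\varphi|^2-\mathcal{A}_w^2\varphi^2)\,d\HH^{d-1}\ge-\big(\tfrac{d+s-2}{2}\big)^2\int_{\Sigma_w}g\varphi^2\,d\HH^{d-1}$; taking the infimum over $\varphi$ yields $\lambda_s(\Sigma_w)\ge-\big(\tfrac{d+s-2}{2}\big)^2$.

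The only genuinely delicate point is the integrability bookkeeping near the free boundary $\partial\Sigma_w$ (and near $0$ and $\infty$) that legitimises the polar decomposition, the use of Fubini, and the slice-wise application of the definition of $\lambda_s(\Sigma_w)$: this is exactly where the hypothesis $s>-1$ enters, through $\mathrm{dist}(\cdot,\partial\Sigma_w)^s\in L^1(\Sigma_w)$ and the boundedness of $\mathcal{A}_w^2$ up to the smooth part of $\partial\Sigma_w$ supplied by \cref{prop:k-implies-k+1}. The sharp one–dimensional weighted Hardy inequality and the existence of its almost–optimal sequences are classical and pose no difficulty.
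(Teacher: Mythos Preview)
Your proof is correct and follows essentially the same approach as the paper: separation of variables in polar coordinates, exploitation of the homogeneities of $w^s|\nabla w|^2$ and $\mathcal{A}_w^2$, and reduction to the sharp one-dimensional weighted Hardy inequality with exponent $a=d+s-1$. The paper argues only with separated test functions $f(r,\theta)=g(r)\varphi(\theta)$ and invokes the Hardy infimum directly, leaving the direction ``$\lambda_s(\Sigma_w)\ge -\big(\tfrac{d+s-2}{2}\big)^2\Rightarrow Q(f)\ge0$ for \emph{all} $f$'' implicit; your slice-wise argument (applying Hardy for each $\omega$ and the Rayleigh lower bound for each $r$, then Fubini) makes this step explicit and is a welcome clarification, but it is the same idea at heart.
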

\begin{proof}
Through the proof, we often identity a point $x \in \R^d$ with its spherical coordinates $(r,\theta)$, with $r>0$ and $\theta \in \mathbb{S}^{d-1}$. Notice also that, since $w$ is a $1$-homogeneous function, we have that $w^s$ is $s$-homogeneous, $|\nabla w|$ is $0$-homogeneous and $\mathcal{A}_w^2$ is $(-2)$-homogeneous.\\
Thus, let $f(r,\theta):=g(r)\varphi(\theta)$ with $\varphi \in C^\infty(\mathbb{S}^{d-1})$ and $g \in C^\infty_c((0,+\infty))$. Using the coarea formula and the homogeneity of $w$, we get
\begin{align*}
\int_{\O_w} w^s |\nabla w|^2 \left(|\nabla f|^2 -\mathcal{A}^2_w f^2\right)\,dx&=\int_0^{\infty} r^{s+d-3}g^2(r)\,dr \int_{\Sigma_w} w^s|\nabla w|^2\left(|\nabla_S \varphi|^2 - \mathcal{A}_w^2\varphi^2\right) \,d\mathcal{H}^{d-1}\\
&\qquad + \int_0^{\infty} r^{s+d-1}(g'(r))^2\,dr \int_{\Sigma_w} w^s|\nabla w|^2 \varphi^2 \,d\mathcal{H}^{d-1}.
\end{align*}
On the other hand, since the Hardy's inequality implies that
$$
\inf\left\{\frac{\int_0^\infty r^{s+d-1}(g'(r))^2\,dr}{\int_0^\infty r^{s+d-3}g(r)^2\,dr} : g \in C^\infty_c((0,+\infty))\right\} = \left(\frac{d+s-2}{2}\right)^2,
$$
we deduce that, for $1$-homogeneous solutions, the stability condition \eqref{e:stable2} holds if and only if 
\be\label{e:real}
\int_{\Sigma_w} w^s|\nabla w|^2\left(|\nabla_S \varphi|^2 - \mathcal{A}_w^2\varphi^2\right) \,d\mathcal{H}^{d-1} + \left(\frac{d+s-2}{2}\right)^2 \int_{\Sigma_w} w^s|\nabla w|^2\varphi^2 \,d\mathcal{H}^{d-1}\geq 0
\ee
for every $\varphi \in C^\infty(\mathbb{S}^{d-1})$. Finally, by the definition of $\lambda_s(\Sigma_w)$, it is immediate to see that \eqref{e:real} is equivalent to the claimed lower bound \eqref{e:lower-bound-lambda}.
\end{proof}
In light of the stability condition for positive exponents \cite[Theorem 4.1]{thomas}, the previous result can be extended to the whole range $\gamma \in (-2,2)$. Naturally, in the case of non-negative exponents, the weighted eigenvalue $\lambda_s(\Sigma_w)$ admits alternative formulations depending on the notion of stability exploited in the proof (see the discussion in \cref{rem:differences}).\\\vspace{-0.35cm}

For the case of minimal surfaces, the stability criterion for cones can be stated as follows (see \cite{Simons:minimal-varieties}). Let $C$ be a minimal cone in $\R^d$, smooth outside the origin, so that $M:= C\cap \mathbb{S}^{d-1}$ is smooth in $\mathbb{S}^{d-1}$. Consider the eigenvalue problem 
    \be\label{e:eigen-ms}
\Lambda(M):= \min_{\substack{\phi \in C^\infty(\mathbb{S}^{d-1})\\ \phi\not\equiv0}}\ddfrac{\int_{M} \left(|\nabla_M \phi|^2 -|A_M|^2 \phi^2\right)\,d\HH^{d-2}}{\int_{M} \phi^2 \,d\HH^{d-2}}\,,
\ee
where $|A_M|^2$ is the squared norm of the second fundamental form of $M$ and $\nabla_M$ is the tangential gradient on $M$.
Then, $C$ is stable in $\R^d$ if and only if 
    \be\label{e:lower-bound-lambda-ms}
    \Lambda(M)\geq -\left(\frac{d-3}{2}\right)^2.
    \ee
The eigenvalue $\Lambda(M)$ is the first eigenvalue of Jacobi operator associated to $M$. Indeed, if we set $L_M:=\Delta_M + |A_M|^2$, where $\Delta_M$ is the Laplace-Beltrami operator on $M$, then there exists $\phi \colon \mathbb{S}^{d-1}\to \R$ satisfying
$$
-L_M \phi = \Lambda(M) \phi\quad \mbox{on }M,\qquad \int_M \phi^2 \,d\mathcal{H}^{d-2}=1.
$$
\subsection{Exploring the limit as $\gamma \to -2$}
In order to provide some insight into the limit of the stability criterion as $\gamma \to -2$, we begin by sketching the phenomena that arise in this singular regime. The key point in this discussion is the behavior of the weighted measure associated with \eqref{e:lambda_s} as $s \to -1$ (see \eqref{eq:citodopo} below).

Given $s_k \to -1$, let $w_k$ be a sequence of $1$-homogeneous global stable solutions, regular outside the origin, in the sense of \cref{def:definition3.2} and \cref{def:definition2.1}, such that
\be\label{e:melacito}
\text{$w_k\lvert_{\mathbb{S}^{d-1}} \in C^{2,\alpha}(\overline\Sigma_{w_k})$ and $\Sigma_{w_k}:=\Omega_{w_k}\cap\mathbb{S}^{d-1}$ is $C^{2,\alpha}$-regular, 
uniformly in $k$}. 
\ee
The results in \cref{s:smooth} do not provide estimates that are uniform in $k$. As far as we know,  uniform $C^{1,\alpha}$ estimates have been established only in \cite{DeSilvaSavinNegativePowerCompactness}, for minimizers of a normalized Alt-Phillips functional. The proof of assumption \eqref{e:melacito} is rather involved and would require a more detailed refinement of our regularity theory, which goes beyond the scope of this paper.

Under such compactness, we proceed by studying the asymptotic behavior of both the Euler-Lagrange equation and the Rayleigh-type quotient corresponding to \eqref{e:lambda_s}, as $k \to +\infty$. 
Roughly speaking, the proof consists
into showing that
\be\label{eq:citodopo}
(1+s_k)w_k^{s_k}|\nabla w_k|^2\,d\mathcal{H}^{d-1} \overset{\ast}{\rightharpoonup} d\mathcal{H}^{d-2} \res M,\quad\mbox{as }k\to +\infty,
\ee
where $M:=\lim_{k\to \infty}\partial \Sigma_{w_k}$. The presence of the coefficient $(1+s_k)$ is coherent with the normalization introduced in \cite[Theorem 2.4]{DeSilvaSavinNegativePower}, where they shown that the perimeter functional coincides with $\Gamma$-limit of $\mathcal{E}_s$, as $s\to -1$. Precisely, in the context of minimizers, they show that $
w_k^{1+s_k}\to \ind_{\{{w_\infty}>0\}}$ in $L^1$,
where $w_\infty:=\lim_{k\to+\infty}w_k$. 
Then \eqref{eq:citodopo} follows by noticing 
$$
|\nabla w_k^{1+s_k}| = (1+s_k)w_k^{s_k} |\nabla w_k|,\qquad\mbox{and}\qquad|\nabla w_k|=1\quad\text{on }\partial\Sigma_{w_k}.
$$ 

We also notice that \eqref{e:melacito} allows to deduce that $M$ is minimal, i.e.,~$H_{M}=0$. Indeed, let $H_k$ be the mean curvature of $\partial \Sigma_{w_k}$ oriented towards the complement $\O_{w_k}$ and $\nu_k$ be the outer normal to $\partial \Sigma_{w_k}$. Then
$$
w_k(x-t\nu_k(x)) = t  - \frac{H_k}{2(1+s_k)}t^2 + O(t^{2+\alpha}),\quad\mbox{for }x\in \partial \Sigma_{w_k}.
$$
In light of the higher regularity result of \cref{s:smooth}, the computation of the second normal derivative is essentially equivalent to one in \cite[Lemma 5.1]{thomas}. Hence, by substituting such expansion in the free boundary condition \eqref{e:hodo-pre-transform} (alternatively \eqref{e:visc.DeS.w}), we deduce that $\lim_{t \to 0^+} t^{1+s_k}H_k = 0$ for every $k>0$. Therefore, the claim follows by $C^{2,\alpha}$-compactness.\\\vspace{-0.35cm}
 
In the present section we proceed by underlying the effect of such phenomenon by showing the convergence of the stability criterion \cref{p:stability-criterion-eigen}, under the uniform regularity assumptions. \\ \vspace{-0.35cm}

\noindent \textit{Step 1: the asymptotic of the first eigenfunction.}
We begin by introducing some notations. Let $w:=w_k$, $s:=s_k$ and 
set 
$$
\Sigma:=\O_w \cap \mathbb{S}^{d-1},\qquad M_t:=\partial\{w>t\}\cap \mathbb{S}^{d-1}.
$$
Notice that $M_0=\partial \Sigma$. Since $w$ has an isolated singularity at the origin, both $\Sigma$ and $M_t$ are $C^{2,\alpha}$-regular in $\mathbb{S}^{d-1}$, uniformly in $k$. Now, let  $\varphi:=\varphi_{k}:\mathbb{S}^{d-1} \to \R$ be an eigenfunction corresponding to $\lambda_s:=\lambda_{s_k}(\Sigma)$. By rewriting the Euler-Lagrange equations associated to \eqref{e:lambda_s}, we get
\be\label{e:EL-eigen}
\begin{cases}
-\left(\mathrm{div}_S(w^s|\nabla w|^2 \nabla_S \varphi)  +\mathcal{A}_w^2 w^s|\nabla w|^2 \varphi\right) = \lambda_s  w^s|\nabla w|^2 \varphi &\quad\mbox{in }\Sigma\vspace{0.02cm},\\
w^s |\nabla w|^2 \nabla_S \varphi \cdot \nabla_S w = 0 &\quad\mbox{on }\partial \Sigma.
\end{cases}\ee
We stress that the boundary condition is understood in a limiting sense, as we approach $\partial \Sigma$.
Moreover, we assume that the following $L^2$-type normalization holds true
\be\label{e:normalization-lambdas}
(1+s)\int_{\mathbb{S}^{d-1}}w^s|\nabla w|^2 \varphi^2\,d\mathcal{H}^{d-1}=1,
\ee
for every $k>0$. In light of the uniform $C^{2,\alpha}$-regularity of $w$ and $\Sigma$, by \cite[Theorem 1.1]{TTV} we deduce that $\varphi\in C^{2,\alpha}(\overline{\Sigma})$, uniformly in $k$.\\
Now, for every $0\leq t \leq  \norm{w}{L^\infty(\mathbb{S}^{d-1})}$, we denote by $H_{M_t}$ the mean curvature of $M_t\subset \mathbb{S}^{d-1}$, pointing towards the complement of $\{w>t\}$, and with 
$$
\nu := -\frac{\nabla_S w}{|\nabla_S w|}
$$
the outer normal vector to $M_t$, tangent to $\mathbb{S}^{d-1}$. Thus, the decomposition
$$
\Delta_S \varphi = \Delta_{M_t} \varphi - H_{M_t} \varphi_{\nu} + \varphi_{\nu\nu}\quad \mbox{on } M_t,
$$
implies that 
\begin{align*}
\mathrm{div}_S(w^s|\nabla w|^2 \nabla_S \varphi)  + \mathcal{A}_w^2 w^s|\nabla w|^2 \varphi &= w^s|\nabla w|^2 (\Delta_{M_t} \varphi + \mathcal{A}_w^2 \varphi - H_{M_t}\varphi_{\nu})\\
&\qquad + w^s|\nabla w|^2\varphi_{\nu \nu} + \nabla_S (w^s|\nabla w|^2)\cdot \nabla_S \varphi\quad \mbox{on } M_t.
\end{align*}
On the other hand, since $\nu$ is orthogonal to $M_t$, we have
\begin{align*}
\partial_{\nu} (w^s|\nabla w|^2 \varphi_{\nu}) &= w^s |\nabla w|^2 \varphi_{{\nu} {\nu}} + \partial_{\nu}(w^s |\nabla w|^2)\varphi_{\nu}\\
&= w^s |\nabla w|^2 \varphi_{{\nu} {\nu}} + \nabla_S(w^s |\nabla w|^2)\cdot \nabla_S \varphi -\nabla_{M_t}(w^s |\nabla w|^2)\cdot \nabla_{M_t}\varphi\\
&= w^s |\nabla w|^2 \varphi_{{\nu} {\nu}} + \nabla_S(w^s |\nabla w|^2)\cdot \nabla_S \varphi - w^s\nabla_{M_t}(|\nabla w|^2)\cdot \nabla_{M_t}\varphi,
\end{align*}
where in the last equality we use that $w$ is constant on $M_t$. Finally, we infer that
\begin{align*}
&\mathrm{div}_S(w^s|\nabla w|^2 \nabla_S \varphi) +\mathcal{A}_w^2 w^s|\nabla w|^2 \varphi\\
&\quad\quad= w^s|\nabla w|^2 \left(\Delta_{M_t} \varphi + \mathcal{A}_w^2\varphi - H_{M_t}\varphi_{\nu} + \frac{\nabla_{M_t}(|\nabla w|^2)}{|\nabla w|^2}\cdot \nabla_{M_t}\varphi\right)  + \partial_{\nu} (w^s |\nabla w|^2\varphi_{\nu})
\end{align*}
on $M_t$. Hence, since $\varphi$ is a an eigenfunction associated to $\lambda_s$, by \eqref{e:EL-eigen} we get 
$$
- \partial_{\nu} (w^s |\nabla w|^2\varphi_{\nu}) = w^s |\nabla w|^2 F_t\qquad\mbox{on } M_t,
$$
where 
$$
F_t:= \Delta_{M_t} \varphi + \mathcal{A}^2_w \varphi +  \lambda_s \varphi - H_{M_t}\varphi_{\nu} +\frac{\nabla_{M_t}(|\nabla w|^2)}{|\nabla w|^2}\cdot \nabla_{M_t}\varphi.
$$
By combining \eqref{szformula} with the boundary condition from \eqref{e:EL-eigen} and the fact that $|\nabla w| = 1$ on $\partial\Sigma$, we have
$$
F_0 \equiv L_{\partial\Sigma} \varphi +\lambda_s \varphi \quad \mbox{on }\partial\Sigma,
$$ where $L_{\partial\Sigma}$ is the Jacobi operator associated to $\partial\Sigma$ defined above.

Let us denote by $N_\delta(\partial\Sigma)$ a $\delta$-neighborhood of $\partial\Sigma$ in $\Sigma$. Since $\partial\Sigma$ is $C^{2,\alpha}$-regular in $\mathbb{S}^{d-1}$, uniformly in $k$, there exists $\rho_0>0$ for which in $N_{\rho_0}(\partial\Sigma)$ we can exploit a Fermi-type coordinate system. Indeed, we set $x(\theta,\rho):=\chi_{z(\theta)}(\rho)$, where $z : U \to \mathbb{S}^{d-1}$ is a local parametrization of $\partial\Sigma$ and, fixed $z \in \partial\Sigma$, the map $\rho \mapsto \chi_{z}(\rho)$ is a solution of
$$
\begin{cases}
\chi_z'(\rho) = -\nu(\chi_z(\rho))&\quad\mbox{for }\rho \in (0,\rho_0) \\
\chi_z(0)=z.
\end{cases}
$$
For the sake of readability, we omit the dependence of the new coordinates from $\theta \in U$. Since $w \in C^{2,\alpha}(\overline{\Sigma})$ and $|\nabla w|=1$ on $\partial\Sigma$, then in 
$N_{\rho_0}(\partial\Sigma)$ we can rewrite the condition $w(x)=t$ as
$$
w(\chi_z(\tau))=t \quad\iff\quad w(z -\tau \nu + O(\tau)) = t \quad \iff\quad  t=\tau(1+O(\tau)).
$$
Therefore
\be\label{eq:richiamo0}
\begin{aligned}
(w^s|\nabla w|^2\nabla_S\varphi\cdot \nu)(\chi_{z}(\rho)) &= \int_0^\rho (\nabla_S(w^s|\nabla w|^2\nabla_S\varphi\cdot \nu)(\chi_z(\tau))\cdot \chi'_z(\tau)\,d\tau \\
&= \int_0^\rho (\nabla_S(w^s|\nabla w|^2\nabla_S\varphi\cdot \nu)(\chi_z(\tau))\cdot (-\nu(\chi_z(\tau)))\,d\tau \\
&= \int_0^\rho (w^s|\nabla w|^2 F_{w(\chi_z(\tau))})(\chi_z(\tau))\,d\tau \\&= \int_0^\rho (w^s|\nabla w|^2 F_{\tau(1+O(\tau))})(\chi_z(\tau))\,d\tau.
\end{aligned}
\ee
In the following computations we restrict to the values of $t$ for which $M_t\subset N_{\rho_0}(\partial\Sigma)$. Again, since $w \in C^{2,\alpha}(\overline{\Sigma})$, uniformly in $k$, there exists $C>0$ such that 
\be\label{eq:richiamo1}
|(w^s|\nabla w|^2)(\chi_z(\tau)) - \tau^s| \leq  C\tau^{s+1},\quad \mbox{in } N_{\rho_0}(\partial\Sigma).
\ee
Fixed $z\in \partial\Sigma$, if we set $f(\tau):=F_{\tau(1+O(\tau))}(\chi_z(\tau))$, we can exploit the one-dimensional nature of the construction, and get
\be\label{eq:richiamo2}
\begin{aligned}
\bigg|(1+s)\int_0^\rho (w^s|\nabla w|^2)&(\chi_z(\tau)) f(\tau)\,d\tau -
f(0)\bigg|
\\
&\leq (1+s)\int_0^\rho \tau^s |f(\tau)-f(0)|\,d\tau + C(1+s) \int_0^\rho \tau^{s+1}|f(\tau)|\,d\tau,\\
&\leq (1+s) C \left([f]_{C^{0,\alpha}}+\norm{f}{L^\infty} \right) \rho^{s+1+\alpha},
\end{aligned}
\ee
where the right-hand side converges to zero as $k\to +\infty$, being $f \in C^{0,\alpha}([0,\rho_0])$, uniformly in $k$. Notice also that $f(0)=F_0(z)$, for every $z \in \partial\Sigma$. Therefore, by exploiting the asymptotic of $w$ and $|\nabla w|$ close to $\partial\Sigma$, and combining \eqref{eq:richiamo0}, \eqref{eq:richiamo1}, \eqref{eq:richiamo2}, we get 
$$
|(1+s)\rho^s(\nabla_S\varphi\cdot \nu)(\chi_z(\rho)) - F_0(z)| \leq  (1+s)C \rho^{s+1},
$$
for every $\rho\in (0,\rho_0)$ and $z \in \partial\Sigma$. Similarly, by applying the coarea formula, the normalization \eqref{e:normalization-lambdas} implies 
\be\label{e:denominator}
\left|\int_{\partial\Sigma}\varphi^2\,d\mathcal{H}^{d-2}-1 \right|\leq (1+s)C\rho^{s+1}.
\ee
By rephrasing the last estimates with the original functions, we get 
$$
\left|L_{\partial \Sigma_{w_k}}\varphi_k + \lambda_{s_k}(\Sigma_{w_k})\varphi_k\right|
\leq (1+s_k)C\rho_0^{s+1}
\,\,\,\mbox{on }\partial \Sigma_{w_k},\quad\,\left|\int_{\partial \Sigma_{w_k}}\varphi_k^2\,d\mathcal{H}^{d-2}-1 \right|\leq (1+s_k)C\rho_0^{s+1}.
$$
Since $w_k,\varphi_k \in C^{2,\alpha}(\overline{\Sigma}_{w_k})$ and $\partial \Sigma_{w_k}$ is $C^{2,\alpha}$-regular, uniformly in $k$, we can proceed by passing to the limit as $k\to \infty$. If we set 
$$
w_\infty:=\lim_{k\to \infty}w_{k},\quad \varphi_\infty:=\lim_{k\to \infty}\varphi_{k},\quad M:=\lim_{k\to \infty}\partial \Sigma_{w_{k}},\quad \lambda_\infty:=\lim_{k\to+\infty}\lambda_{s_k}(\Sigma_{w_k})
$$
we get 
$$
-L_{M}\varphi_\infty =\lambda_{\infty}\varphi_\infty\quad \mbox{on }M,\qquad
\int_{M} \varphi_\infty\,d\mathcal{H}^{d-2}=1,\qquad 
\lambda_\infty \geq -\left(\frac{d-3}{2}\right)^2.
$$

\noindent\textit{Step 2: the asymptotic of the Rayleigh quotient.}
We want to conclude by showing that $\lambda_\infty=\Lambda(M)$, and so that $M$ is a stable minimal cone. A priori, the claim would follow by showing that the Rayleigh-type quotient in \eqref{e:lambda_s} converges to the one in \eqref{e:eigen-ms} as $k\to \infty$.  Nevertheless, as pointed out in \cref{rem:quadratic} below, such convergence does not hold in general. Our approach is therefore to prove this convergence for a particular family of test functions.\\
Thus, let us omit the dependence on the index $k$ and consider the same notations introduced above. Let $\varphi \in C^\infty(\mathbb{S}^{d-1})$ satisfying the normalization \eqref{e:normalization-lambdas}. We set 
\be\label{testfunctions}
\widetilde\varphi(\chi_{z(\theta)}(\tau))=\varphi(z(\theta))
\ee so that
$\widetilde\varphi\in C^\infty(\mathbb{S}^{d-1})$ and 
$$
\widetilde\varphi=\varphi\quad\text{on }\partial\Sigma,\qquad\nabla_S\widetilde\varphi\cdot \nu=0\quad\text{on }\partial\Sigma.
$$ 
Since $\widetilde\varphi$ is regular, we also have $w^s|\nabla w|^2 \nabla_S\widetilde\varphi \cdot \nu =0$ on $\partial\Sigma$.
In particular, since $\nabla_S = \nabla_{\partial\Sigma} + (\nabla_S \cdot \nu)\nu$, it implies that $\nabla_S\widetilde\varphi=\nabla_{\partial\Sigma}\varphi$. 
If we define $g(x):=|\nabla_S \widetilde\varphi|^2 - \mathcal{A}^2_w \widetilde\varphi^2$, then, by \eqref{szformula} and the definition on $\widetilde\varphi$, we have
\be\label{eq:tilde}
g\equiv |\nabla_{\partial\Sigma} \varphi|^2 - |A_{\partial\Sigma}|^2 \varphi^2  \quad \mbox{on }\partial\Sigma.
\ee
It is not restrictive to assume that $\mathrm{supp}\varphi \subset N_{\rho_0}(\partial\Sigma)$. In this neighborhood, the coordinate system $x(\theta,\tau):=\chi_{z(\theta)}(\tau)$ induce the following expansion of the volume element
$$
d\mathcal{H}^{n-1}(x)= (1+O(\tau))d\mathcal{H}^{d-2}(\theta)d\tau,
$$
where $d\mathcal{H}^{d-2}(\theta)$ is the $(d-2)$-dimensional surface element on $\partial\Sigma$ (see \cite[Remark 3]{CaffarelliJerisonKenig04:NoSingularCones3D}). Then,  
$$
\int_{\Sigma} w^s|\nabla w|^2 g\,d\mathcal{H}^{d-1} =  \int_{\partial\Sigma}\left(\int_0^{\rho_0}(w^s|\nabla w|^2 g)(\chi_{z(\theta)}(\tau))(1+O(\tau))\,d\tau\right)\,d\mathcal{H}^{d-2}(\theta).
$$
By combining that $\tau \mapsto g(\chi_{z(\theta)}(\tau)) \in C^{0,\alpha}([0,\rho])$ and the asymptotic in \eqref{eq:richiamo2}, we get
\be\label{eq:richiama100}
\begin{aligned}
\lim_{k\to+\infty}(1+s_k)\int_{\Sigma_{w_k}} w_k^{s_k}|\nabla w_k|^2 g\,d\mathcal{H}^{d-1} &=\int_{M}g(\chi_{z(\theta)}(0))\,d\mathcal{H}^{d-2} \\&=\int_{M}(|\nabla_{M}\varphi|^2-|A_{M}|^2 \varphi^2)\,d\mathcal{H}^{d-2},
\end{aligned}
\ee
where the last identity follows by \eqref{eq:tilde}. For what it concerns the asymptotic of the denominator in the Rayleigh quotient, by
combining $$\left|\int_{\partial \Sigma_{w_k}}\widetilde{\varphi}^2\,d\mathcal{H}^{d-2}-(1+s_k)\int_{\Sigma_{w_k}}w_k^{s_k}|\nabla w_k|^2 \widetilde\varphi^2\,d\mathcal{H}^{d-1}
 \right|\leq (1+s_k)C\rho_0^{s_k+1}$$ with \eqref{e:denominator}, we finally get that
$$
\lambda_\infty=\lim_{k\to+\infty}\lambda_{s_k}(\Sigma_{w_k})\le \ddfrac{\int_{M} \left(|\nabla_M \varphi|^2 -|A_M|^2 \varphi^2\right)\,d\HH^{d-2}}{\int_{M} \varphi^2 \,d\HH^{d-2}},\quad\mbox{for every }\varphi \in C^\infty(\mathbb{S}^{d-1}),
$$
which implies that $\lambda_\infty=\Lambda(M)$.

\begin{remark}[The quadratic form associated to the stability]\label{rem:quadratic} In the previous steps, we focused on the asymptotic behavior of homogeneous stable solutions of the Alt-Phillips problem as $\gamma \to -2$, motivated by their relevance in the classification of singular points. However, under the regularity assumption \eqref{e:melacito}, this asymptotic analysis can be readily extended to stable solutions regular outside the origin, in the sense of \cref{def:definition3.1} and \cref{def:definition2.1}, by showing that the limit of the associated free boundaries is indeed a stable minimal surface.

Our analysis highlights a deep difference between the asymptotic of stable regular solutions and the one of the quadratic form associated to the stability condition, i.e.,
$$
Q_s(f):=\int_{ \O_w} w^s|\nabla w|^2\left(|\nabla f|^2 - \mathcal{A}^2_w\, f^2\right)\,dx.
$$
An additional term appears in the limit of 
$Q_s$ as $s \to -1$, corresponding to the derivative of the test functions in the direction orthogonal to the limiting interface  $M := \lim_k \partial \Omega_{w_k}$.  Nevertheless, the stability of this limit interface can be obtained by choosing test functions of type \eqref{testfunctions}. This behavior is not unexpected, as it mirrors phenomena observed for the singular limit of stable solutions to the Allen-Cahn equation (see \cite{le1,Hiesmayr}). 
\end{remark}
\section*{Acknowledgements}
M.C. is supported by the European Research Council (ERC), under the European Union's Horizon 2020 research and innovation program, through the project ERC VAREG - {\em Variational approach to the regularity of the free boundaries} (grant agreement No. 853404). M.C. also acknowledges the MIUR Excellence Department Project awarded to the Department of Mathematics, University of Pisa, CUP I57G22000700001. M.C and G.T. are members of INdAM-GNAMPA. G.T. is supported by the GNAMPA project E5324001950001 - {\em PDE ellittiche che degenerano su varietà di dimensione bassa e frontiere libere molto sottili}.
\printbibliography
\end{document}